\documentclass[reqno]{amsart}
\usepackage{amsmath}
\usepackage{amssymb}
\usepackage{hyperref}
\usepackage{soul}
\hypersetup{ colorlinks = true, urlcolor = blue, linkcolor = blue, citecolor = red }
\usepackage{xcolor}
\usepackage{enumitem}
\usepackage{xparse}
\let\realItem\item % save a copy of the original item
\makeatletter
\NewDocumentCommand\myItem{ o }{%
   \IfNoValueTF{#1}%
      {\realItem}% add an item
      {\realItem[#1]\def\@currentlabel{#1}}% add an item and update label
}
\makeatother

\usepackage{enumitem}    
\setlist[enumerate]{
    before=\let\item\myItem,       % use \myItem in enumerate
    label=\textnormal{(\arabic*)}, % format the label
    widest=(2')                    % set the widest label
}
\usepackage{esint}
\makeatletter
\def\namedlabel#1#2{\begingroup
	#2%
	\def\@currentlabel{#2}%
	\phantomsection\label{#1}\endgroup
}
\usepackage{varwidth}
\usepackage{tasks}
\DeclareMathOperator{\dv}{div}
\DeclareMathOperator{\loc}{loc}

\newcommand{\RR}{\mathbb{R}}
\newcommand{\Om}{\Omega}
\newcommand{\na}{\nabla}

\newcommand{\La}{\Lambda}

\newcommand{\ep}{\epsilon}

\newcommand{\la}{\lambda}

\newcommand{\Qla}{Q^\la_\rho}
\newcommand{\Q}{\mathcal{Q}}

\newcommand{\cs}{\textbf{c}}

\theoremstyle{definition}

\newtheorem{theorem}{Theorem}[section]
\newtheorem{definition}[theorem]{Definition} 
\newtheorem{lemma}[theorem]{Lemma}

 \newtheorem{remark}[theorem]{Remark}
\def\Xint#1{\mathchoice
	{\XXint\displaystyle\textstyle{#1}}%
	{\XXint\textstyle\scriptstyle{#1}}%
	{\XXint\scriptstyle\scriptscriptstyle{#1}}%
	{\XXint\scriptstyle\scriptscriptstyle{#1}}%
	\!\int}
\def\XXint#1#2#3{{\setbox0=\hbox{$#1{#2#3}{\int}$}
		\vcenter{\hbox{$#2#3$}}\kern-.5\wd0}}

\def\Yint#1{\mathchoice
	{\YYint\displaystyle\textstyle{#1}}%
	{\YYint\textstyle\scriptstyle{#1}}%
	{\YYint\scriptstyle\scriptscriptstyle{#1}}%
	{\YYint\scriptscriptstyle\scriptscriptstyle{#1}}%
	\!\iint}
\def\YYint#1#2#3{{\setbox0=\hbox{$#1{#2#3}{\iint}$}
		\vcenter{\hbox{$#2#3$}}\kern-.51\wd0}}
\def\longdash{{-}\mkern-3.5mu{-}} 
\def\fiint{\Yint\longdash}

\def\Xint#1{\mathchoice
	{\XXint\displaystyle\textstyle{#1}}%
	{\XXint\textstyle\scriptstyle{#1}}%
	{\XXint\scriptstyle\scriptscriptstyle{#1}}%
	{\XXint\scriptscriptstyle\scriptscriptstyle{#1}}%
	\!\int}
\def\XXint#1#2#3{{\setbox0=\hbox{$#1{#2#3}{\int}$ }
		\vcenter{\hbox{$#2#3$ }}\kern-.6\wd0}}

\def\dashint{\Xint-}

\DeclareMathOperator{\dist}{dist}
\DeclareMathOperator*{\esssup}{ess\,sup}
\usepackage{nameref}
\makeatletter
\let\orgdescriptionlabel\descriptionlabel
\renewcommand*{\descriptionlabel}[1]{%
	\let\orglabel\label
	\let\label\@gobble
	\phantomsection
	\edef\@currentlabel{#1}%
	\let\label\orglabel
	\orgdescriptionlabel{#1}%
}
\makeatother
\numberwithin{equation}{section}
\def\Xint#1{\mathchoice
    {\XXint\displaystyle\textstyle{#1}}%
    {\XXint\textstyle\scriptstyle{#1}}%
    {\XXint\scriptstyle\scriptscriptstyle{#1}}%
    {\XXint\scriptscriptstyle\scriptscriptstyle{#1}}%
    \!\int}
\def\XXint#1#2#3{\setbox0=\hbox{$#1{#2#3}{\int}$}
    \vcenter{\hbox{$#2#3$}}\kern-0.5\wd0}
\def\fint{\Xint-}
\def\dashint{\Xint{\raise4pt\hbox to7pt{\hrulefill}}}

\def\XXiint#1#2#3{\setbox0=\hbox{$#1{#2#3}{\iint}$}
    \vcenter{\hbox{$#2#3$}}\kern-0.5\wd0}

\begin{document}

\title[Parabolic Poincar\'e inequalities and maximal function estimates]{Parabolic Poincar\'e inequalities and maximal function estimates for systems of partial differential equations}

\author{Wontae Kim}
\address[Wontae Kim]{Korea Institute for Advanced Study, 5 Hoegi-ro, Dongdaemun-gu, Seoul 02455, Republic of Korea}
\email{wontae@kias.re.kr}

\author{Juha Kinnunen}
\address[Juha Kinnunen]{Department of Mathematics, Aalto University, P.O. BOX 11100, 00076 Aalto, Finland}
\email[Corresponding author]{juha.k.kinnunen@aalto.fi}

\everymath{\displaystyle}

\makeatletter
\@namedef{subjclassname@2020}{\textup{2020} Mathematics Subject Classification}
\makeatother
\keywords{Parabolic $p$-Laplace systems, very weak solutions, reverse H\"older inequalities for the gradient}
\subjclass[2020]{35K92, 35B45, 35D30}
\begin{abstract}
We study parabolic Poincar\'e inequalities for solutions to nonlinear systems of partial differential equations. Our main results show that these inequalities are self-improving. As applications, we establish reverse H\"older inequalities for the mean oscillation over parabolic cylinders and for the gradient.
We also consider the corresponding Poincar\'e inequalities and self-improvement results on time intervals at fixed spatial points. These results are based on a pointwise maximal function estimate. As an application, we obtain regularity results in the time direction for solutions to nonlinear systems.
\end{abstract}

\maketitle

\section{Introduction}

We consider the nonlinear system
\begin{equation}\label{p_eq}
    u_t-\dv\mathcal{A}(z, \na u) =-\dv |F|^{p-2}F
\end{equation}
in $\Om_T=\Omega\times(0,T)$, where $1<p<\infty$.
Here $\mathcal{A}(z,\xi):\Omega_T\times \mathbb{R}^{Nn}\to\mathbb{R}^{Nn}$ with $N\ge1$ is a Carath\'eodory vector field, that is, $\mathcal{A}(\cdot,\xi)$ is measurable for fixed $\xi\in\mathbb{R}^{Nn}$ and $\mathcal{A}(z,\cdot)$ is continuous for almost every $z=(x,t)\in\Om_T$. We assume that there exist constants $0<\nu<L<\infty$ such that
\begin{equation}\label{p_ellipticity}
    \nu|\xi|^p\le \mathcal{A}(z,\xi)\cdot \xi\quad\text{and} \quad |\mathcal{A}(z,\xi)|\le L|\xi|^{p-1}
\end{equation}
for almost every $z\in\Om_T$ and for every $\xi\in\mathbb{R}^{Nn}$.

The main challenge in studying \eqref{p_eq} is that an intrinsic scaling arises in estimates when $p\ne2$, see Section \ref{sec_p-growth}. This reflects the fact that the equation lacks the usual homogeneity property: if $u$ is a solution to \eqref{p_eq}, then, in general, $cu$ is not a solution for any constant $c>0$ with $c\neq 1$. Nevertheless, this lack of homogeneity can be compensated for by introducing an intrinsic scaling.
When $p=2$, the system \eqref{p_eq} is invariant under the scaling 
\[
u(x,t)\mapsto u(rx,r^2t)
\quad\text{and}\quad 
F(x,t)\mapsto rF(rx,r^2t),
\]
where $r>0$. Consequently, uniform estimates can be obtained in parabolic cylinders, see Section \ref{sec_2-growth}.
In the general case $1<p<\infty$, the system is invariant under the scaling 
\[
u(x,t)\mapsto (\la r)^{-1} u(rx,\la^{2-p} r^2t)
\quad\text{and}\quad 
F(x,t)\mapsto \la^{-1} F(rx,\la^{2-p} r^2t),
\]
where $\la>0$ is a scaling factor.
The corresponding geometry of the so-called intrinsic cylinders is discussed in \cite{MR1230384}.
The choice $\lambda=r^{-1}$ is useful for H\"older continuity estimates for the solution and recovers the scaling above when $p=2$, see \cite{MR2865434}.
For parabolic Poincar\'e inequalities discussed in Section \ref{sec_p-growth}, the scaling factor naturally depends on the gradient, as in stopping time arguments in gradient regularity results in  \cite{MR2286632,MR1749438,KL_veryweak,MR2968162}.
For this reason, we discuss the cases $p=2$ and $1<p<\infty$ separately. Our results seem to be new already when $p=2$.

The standard Poincar\'e inequality holds for all Sobolev functions, whereas the parabolic Poincar\'e inequality considered here holds for solutions to \eqref{p_eq}.  More precisely, if $u$ is a solution to  \eqref{p_eq} with $1<p<\infty$, there exists a constant $c=c(n,p,L)$ such that
\[
\fiint_{Q}\frac{|u-u_{Q }|^{p}}{\rho^{p}}\,dz 
\le c\fiint_{Q}|\na u|^{p}\,dz
+ c\biggl( \lambda^{2-p}\fiint_{Q} ( |\na u|+ |F| )^{p-1}\,dz \biggr)^p
\]
for every parabolic cylinder $Q=B_\rho(x_0)\times (t_0-\la^{2-p}\rho^2,t_0+\la^{2-p}\rho^2) \Subset\Omega_T$, where $\lambda>0$ is a scaling factor and $B_\rho(x_0)=\{y\in \mathbb{R}^n: |y-x_0|<\rho\}$.
See Lemma \eqref{p_poincare_lem} for the precise assumptions.
When $p=2$, the parabolic Poincar\'e inequality takes the simpler form
\[
\fiint_{Q}\frac{|u-u_{Q }|^{2}}{\rho^{2}}\,dz
\le c\fiint_{Q}|\na u|^{2}\,dz    
+c\left(\fiint_{Q } |F| \,dz\right)^{2}
\]
for every parabolic cylinder $Q=B_\rho(x_0)\times(t_0-\rho^2,t_0+\rho^2) \Subset\Omega_T$, see Lemma \ref{poincare_lem}. 
Throughout the paper, barred integrals and $u_Q$ denote the integral average over $Q$.
One of the novelties of our approach is that the parabolic Poincar\'e inequalities also hold for very weak solutions that do not belong to the natural energy space.
In particular, we do not assume any regularity in the time direction.
Theorem \ref{very_weak} and Theorem \ref{p_very_weak_thm} refine the existing result \cite[Theorem 2.8]{KL_veryweak} by showing that the local $L^2$ assumption imposed there on very weak solutions is redundant.

Poincar\'e inequalities are known to be self-improving under a doubling condition on the underlying measure. 
Below the measure related critical dimension, a Poincar\'e inequality implies a parabolic Sobolev--Poincar\'e inequality with a larger exponent on the left-hand side. At the critical exponent there exists an exponential estimate and above the critical exponent there exists a Morrey type estimate.
Such results hold in a general setting of metric measure spaces, see \cite[Theorem 9.1.15]{HKST2015}. 
The corresponding result for the parabolic Poincar\'e inequality for solutions to \eqref{eq} with $p=2$ follows immediately from this general theory, see Theorem \ref{main_left}.

The general case $1<p<\infty$, with the intrinsic scaling, is more challenging, see Theorem \ref{p_main_left}. The proof involves a scaling argument and two types of intrinsic cylinders. We apply the self-improving properties of the parabolic Poincar\'e inequalities to establish reverse H\"older inequalities for the mean oscillation over parabolic cylinders; see Theorem \ref{main_right_coro} and Theorem \ref{p_rev_u}. We also show how known reverse H\"older inequalities, and the corresponding higher integrability, for the gradient of a solution can be derived from parabolic Poincar\'e inequalities, see Theorem \ref{grad_higher} and Theorem \ref{p_rev_grad}.

We also discuss the corresponding Poincar\'e inequalities over time intervals at a fixed spatial point. This is based on a pointwise maximal function estimate analogous to the Haj\l asz gradient in metric measure spaces. More precisely, if $u$ is a solution to  \eqref{p_eq} with $p=2$, by Lemma \ref{point_wise} there exists a constant $c=c(n,L)$ such that 
\begin{align*}
\frac{|u(x,t) -u(x,s) |}{|t-s|^\frac{1}{2} } 
\le c \bigl(g(x,t)+g(x,s) \bigr),
\end{align*}
for every parabolic cylinder $Q=B\times I=B_\rho(x_0)\times(t_0-\rho^2,t_0+\rho^2) $ such that $2Q=B_{2\rho}(x_0)\times(t_0-(2\rho)^2,t_0+(2\rho)^2)\Subset \Om_T$ and almost every $x\in B$ and $t,s\in I$. 
Here 
\[
g=M^*(|\na u|+|F|) 1_{2Q},
\]
where $M^*f$ is a suitable version of the strong maximal function, see \eqref{strong_mf}.
The corresponding estimate in the general case $1<p<\infty$ is given in Lemma \ref{p_point_wise}.
By integrating the pointwise inequality above, we obtain a Poincar\'e inequality in the time direction, see Theorem \ref{time_poincare} and Theorem \ref{p_time_poincare}.
Self-improving properties of the Poincar\'e inequality in the time direction are discussed in Theorem \ref{time_der} and Theorem \ref{thm_p_time_der}. 
In particular, our results extend the time regularity result in \cite[Theorem 1.2]{MR4242322}. Instead of Fourier analytic techniques, our approach is based on properties of the Poincar\'e inequality in the time direction.

\section{Preliminaries}
Let $\Om\subset\mathbb{R}^n$, $n\ge1$, be a bounded open domain and $T>0$. We denote the space-time cylinder $\Om_T=\Om\times (0,T]$. 
Let $1\le q<\infty$ and $N\ge1$. 
For a measurable function $u=u(z)=u(x,t): \Om_T \to \mathbb{R}^N$, we say that $u\in L^q(0,T;W^{1,q} (\Om;\mathbb{R}^N) )$ 
if $u(\cdot,t)\in W^{1,q}(\Om;\mathbb{R}^N)$ for almost every $0<t<T$ with respect to the one-dimensional Lebesgue measure and
\[
\iint_{\Om_T}(|u|^q+|\na u|^q)\,dz<\infty,
\]
where $dz=dx\,dt$.

\begin{definition}\label{def_weak_sol}
    Let $1<p<\infty$ and assume that $F\in L^p(\Omega_T;\mathbb{R}^{Nn})$. 
    A function $u\in  L^p_{\loc}(0,T;W^{1,p}_{\loc} (\Om;\mathbb{R}^N))$ is a weak solution to \eqref{p_eq} in $\Om_T$ if
\begin{equation}\label{p_weak_sol}
\iint_{\Om_T} (-u\cdot \varphi_t+\mathcal{A}(z,\na u) \cdot \na \varphi) \,dz
=\iint_{\Om_T} |F|^{p-2}F\cdot \na \varphi\,dz
\end{equation}
 for every $\varphi\in C_0^\infty(\Om_T;\mathbb{R}^N)$.
\end{definition}

\begin{remark}\label{rem_sol}
    Integrals in \eqref{p_weak_sol} in the definition of weak solution are well defined under the assumptions $u\in  L^q_{\loc}(0,T;W^{1,q}_{\loc} (\Om;\mathbb{R}^N))$  for some $q$ with $\max\{1,p-1\}\le q<\infty$, and $|F|\in L^{\max\{1,p-1\}}_{\loc}(\Omega_T)$. 
    If $p\le q<\infty$, such solutions are weak solutions, since 
    \[
    L^q_{\loc}(0,T;W^{1,q}_{\loc} (\Om;\mathbb{R}^N))
    \subset L^p_{\loc}(0,T;W^{1,p}_{\loc} (\Om;\mathbb{R}^N)).
    \]
    However, if $\max\{1,p-1\}\le q<p$, then such solutions do not necessarily belong to $L^p_{\loc}(0,T;W^{1,p}_{\loc} (\Om;\mathbb{R}^N))$. These kinds of solutions are called very weak solutions, see \cite{KL_veryweak}. 
\end{remark}

The integral average of $f$ over a cylinder $Q=B\times I\subset \mathbb{R}^{n+1}$ is denoted by
\[
f_Q =\fiint_Q f(\zeta)\,d\zeta.
\]
We also denote 
\[
f_B(t)=\fint_{B}f(x,t)\,dx
\quad\text{and}\quad
f_I(x)=\fint_{I}f(x,t)\,dt
\]
for the integral averages over a fixed time and a fixed space variable, respectively.
For a vector valued function, the integral averages are taken componentwise.

We will apply the following maximal function $M^*f$ of $f:\mathbb R^{n+1}\to\mathbb R^N$, $f\in L^1_{\loc}(\RR^{n+1};\mathbb R^N)$ defined as
 \begin{equation}\label{strong_mf}
 M^*f(z)=\sup_{B\times I\ni z}\fiint_{B\times I}|f(\zeta)|\ d\zeta,
 \end{equation}
 where supremum is taken over all cylinders $B\times I=B_\rho\times(t-s,t+s)\subset \mathbb{R}^{n+1}$, $\rho>0$, $s>0$, such that $z\in B\times I$. Since it is pointwise bounded by taking the supremum over balls and then the supremum over time intervals, there exists a constant $c=c(n,p)$ such that
\[
\iint_{\mathbb R^{n+1}}|M^*f|^p\,dz
\le c\iint_{\mathbb R^{n+1}}|f|^p\,dz,
\]
whenever $1<p<\infty$.

\section{Systems with 2-growth}\label{sec_2-growth}

In this section, we discuss the parabolic Poincar\'e inequality for the system \eqref{p_eq} with $p=2$, that is,
\begin{equation}\label{eq}
    u_t-\dv\mathcal{A}(z, \na u) =-\dv F.
\end{equation} 
   Let $F\in L^2(\Omega_T;\mathbb{R}^{Nn})$. According to Definition \ref{def_weak_sol}, $u\in  L^2_{\loc}(0,T;W^{1,2}_{\loc} (\Om;\mathbb{R}^N))$ is a weak solution to \eqref{eq} in $\Om_T$, if
\begin{equation}\label{weak_sol}
\iint_{\Om_T} (-u\cdot \varphi_t+\mathcal{A}(z,\na u) \cdot \na \varphi) \,dz
=\iint_{\Om_T} F\cdot \na \varphi\,dz
\end{equation}
 for every $\varphi\in C_0^\infty(\Om_T;\mathbb{R}^N)$.

The proper estimates for \eqref{eq} are obtained in a parabolic geometry.
For $z=(x,t)\in\mathbb{R}^{n+1}$ with $x\in\mathbb{R}^n$ and $t\in\mathbb{R}$, the parabolic cylinder of radius $\rho>0$ centered at $z$ is defined as 
\[
Q_{\rho}=Q_{\rho}(z)=B_\rho(x)\times I_\rho(t),
\]
where
\[
B_\rho(x)=\{y\in \mathbb{R}^n: |x-y|<\rho\} 
\quad\text{and}\quad 
I_\rho(t)=(t-\rho^2,t+\rho^2).
\]
For any dilation factor $\alpha>0$, we denote 
\[
\alpha Q_\rho(z)=B_{\alpha\rho}(x)\times I_{\alpha\rho}(t), 
\quad\alpha B_\rho(x)=B_{\alpha\rho}(x)
\]
and
\[
\alpha I_{\rho}(t)=(t-(\alpha\rho)^2,t+(\alpha\rho)^2).
\]
The parabolic metric associated with parabolic cylinders is defined as
\begin{equation}\label{parabolic_metric}
    \dist(z,z')= \min\{ |x-x'|,|t-t'|^\frac{1}{2} \},
\end{equation}
where $z=(x,t)$ and $z'=(x',t')$ for $x,x'\in \mathbb{R}^{n}$ and $t,t'\in \mathbb{R}$.
We note that
\begin{equation}\label{vol_dec}
\frac{|Q_\rho|}{|Q_{\rho'}|}=\left(\frac{\rho}{\rho'}\right)^{n+2},
\end{equation}
where $Q_\rho$ and $Q_{\rho'}$ are parabolic cylinders in $\mathbb R^{n+1}$.
In particular, $|2Q|=2^{n+2}|Q|$ for every parabolic cylinder $Q$ in $\mathbb R^{n+1}$ and thus the $(n+1)$-dimensional Lebesgue measure is doubling with respect to the parabolic metric. Note that the volume decay over parabolic cylinders is of order $n+2$.

The uncentered Hardy-Littlewood maximal function of $f:\mathbb R^{n+1}\to\mathbb R^N$, $f\in L^1_{\loc}(\RR^{n+1},\mathbb R^N)$, with respect to the parabolic metric is defined as
\[
Mf(z)=\sup_{Q\ni z}\fiint_{Q}|f(\zeta)|\ d\zeta,
\]
where supremum is taken over all parabolic cylinders $Q=Q_\rho\subset \mathbb{R}^{n+1}$, $\rho>0$, such that $z\in Q$.
By the maximal function theorem, there exists a constant $c=c(n,p)$ such that
\[
\iint_{\mathbb R^{n+1}}|Mf|^p\,dz
\le c\iint_{\mathbb R^{n+1}}|f|^p\,dz,
\]
whenever $1<p<\infty$. For $p=1$ we have the corresponding weak type bound.

For weak solutions we have the standard Caccioppoli inequality.

\begin{lemma}\label{caccio_lem}
	Let $|F|\in L^2_{\loc}(\Omega_T)$ and assume that $u\in  L^2_{\loc}(0,T;W^{1,2}_{\loc} (\Om;\mathbb{R}^N))$ is a weak solution to \eqref{eq}. Then $u\in L^\infty_{\loc}(0,T;L^2_{\loc}(\Om;\mathbb{R}^N))$. 
    Moreover, let $Q_{2\rho}=Q_{2\rho}(z_0)=B_{2\rho}(x_0)\times I_{2\rho}(t_0)\Subset\Omega_T$.
    Then there exists a constant $c=c(n,N,\nu,L)$ such that, for any $0<\rho<\rho_1<\rho_2< 2\rho$, we have
    \begin{equation}\label{cacc_ie}
	\begin{split}
			&\esssup_{t\in I_{\rho_1}(t_0)}\fint_{B_{\rho_1}(x_0)}\frac{|u-u_{Q_{\rho_1}(z_0)}|^2}{\rho_1^2}\,dx+\fiint_{ Q_{\rho_1}(z_0)}|\na u|^2\,dz\\
			&\qquad\le c\fiint_{Q_{\rho_2}(z_0)} \frac{|u-u_{Q_{\rho_2}(z_0)}|^2}{(\rho_2-\rho_1)^2}\,dz +c\fiint_{Q_{\rho_2}(z_0)}  |F|^2  \,dz
	\end{split}
    \end{equation}
\end{lemma}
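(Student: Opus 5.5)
The plan is the standard energy argument, with the time derivative handled through Steklov averages, since a weak solution in the sense of \eqref{weak_sol} has no a priori regularity in time. For $h>0$ small, let $[v]_h(x,t)=\frac1h\int_t^{t+h}v(x,s)\,ds$. Testing \eqref{weak_sol} with functions of the form $\varphi(x,t)=\psi(x)\chi(t)$ and using Fubini gives the Steklov formulation: for almost every $t$ and every $\psi\in W^{1,2}_0$ with compact support in $\Omega$,
\[
\int_\Omega \partial_t[u]_h(\cdot,t)\cdot\psi+[\mathcal A(\cdot,\nabla u)]_h(\cdot,t)\cdot\nabla\psi\,dx=\int_\Omega [F]_h(\cdot,t)\cdot\nabla\psi\,dx .
\]
Here $\partial_t[u]_h=(u(t+h)-u(t))/h\in L^2_{\loc}$, so everything is well defined.

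Fix $z_0$, radii $\rho_1<\rho_2$, and the constant vector $a=u_{Q_{\rho_2}}$. Take a spatial cutoff $\eta\in C_0^\infty(B_{\rho_2})$ with $\eta=1$ on $B_{\rho_1}$ and $|\nabla\eta|\le c/(\rho_2-\rho_1)$. Take a time cutoff $\zeta$ that vanishes near $t_0-\rho_2^2$, equals $1$ on $I_{\rho_1}$, and satisfies $|\zeta'|\le c/(\rho_2^2-\rho_1^2)\le c/(\rho_2-\rho_1)^2$, using $\rho_2^2-\rho_1^2\ge(\rho_2-\rho_1)^2$. For $\tau\in I_{\rho_1}$, truncate in time by $1_{(-\infty,\tau)}$, or a linear approximation of it.

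Test with $\psi=\eta^2\zeta([u]_h-a)$ and integrate over $(t_0-\rho_2^2,\tau)$. The time term equals $\frac12\int\eta^2\zeta\,\partial_t|[u]_h-a|^2$. Integrating by parts, it gives $\frac12\int_{B}\eta^2|[u]_h-a|^2(\tau)\,dx$ minus a $\zeta'$ term. Let $h\to0$, using $[u]_h\to u$ and $[\mathcal A]_h\to\mathcal A$ in $L^2_{\loc}$ and taking Lebesgue points in $\tau$. Then use \eqref{p_ellipticity} with $p=2$, Young's inequality on $L|\nabla u|\,2\eta|\nabla\eta||u-a|$, and Young's inequality on the $F$ term. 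This yields, for almost every $\tau\in I_{\rho_1}$,
\[
\int_{B_{\rho_1}}|u-a|^2(\tau)\,dx+\iint_{B_{\rho_1}\times(t_0-\rho_1^2,\tau)}|\nabla u|^2\,dz\le \frac{c}{(\rho_2-\rho_1)^2}\iint_{Q_{\rho_2}}|u-a|^2\,dz+c\iint_{Q_{\rho_2}}|F|^2\,dz .
\]
Taking $\tau\to t_0+\rho_1^2$ controls the gradient over $Q_{\rho_1}$, and taking the ess sup over $\tau$ controls the first term. The same local estimate on arbitrary compact subcylinders gives $u\in L^\infty_{\loc}(0,T;L^2_{\loc})$.

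Next, normalize. Divide by $|Q_{\rho_1}|\approx\rho_1^{n+2}$ for the gradient term and by $|B_{\rho_1}|\rho_1^2$ for the sup term. Since $\rho<\rho_1<\rho_2<2\rho$, the volumes $|Q_{\rho_2}|$ and $|Q_{\rho_1}|$ are comparable by \eqref{vol_dec}, with a constant depending only on $n$, so the right-hand integrals become averages over $Q_{\rho_2}$. Finally, replace $a=u_{Q_{\rho_2}}$ by $u_{Q_{\rho_1}}$ in the sup term. For each $t$,
\[
\fint_{B_{\rho_1}}|u-u_{Q_{\rho_1}}|^2\,dx\le 2\fint_{B_{\rho_1}}|u-a|^2\,dx+2|a-u_{Q_{\rho_1}}|^2,
\]
and by Jensen's inequality and comparability of volumes,
\[
|a-u_{Q_{\rho_1}}|^2\le\fiint_{Q_{\rho_1}}|u-a|^2\,dz\le c\fiint_{Q_{\rho_2}}|u-a|^2\,dz .
\]
Dividing by $\rho_1^2\ge(\rho_2-\rho_1)^2$ gives \eqref{cacc_ie}.

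The main obstacle is the rigorous handling of the time derivative: justifying the Steklov identity from \eqref{weak_sol}, and passing $h\to0$ in the boundary term at time $\tau$ for almost every $\tau$, uniformly enough to take the ess sup. I would follow the classical DiBenedetto approach. First I would prove that the Steklov averages satisfy the identity, then integrate $\partial_t|[u]_h-a|^2$ exactly, and finally use $L^2$ convergence of $[u]_h(\cdot,\tau)$ for almost every $\tau$, after passing to a subsequence.
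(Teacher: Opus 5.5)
Your proposal is correct: it is the standard Steklov-average energy argument. You test with $\eta^2\zeta([u]_h-u_{Q_{\rho_2}})$ and pass $h\to0$ before invoking \eqref{p_ellipticity}. You then normalize, using $|Q_{\rho_2}|\le 2^{n+2}|Q_{\rho_1}|$, and replace $u_{Q_{\rho_2}}$ by $u_{Q_{\rho_1}}$ via Jensen's inequality and $\rho_2-\rho_1<\rho_1$. The paper states this lemma without proof, calling it the standard Caccioppoli inequality, and the device it uses elsewhere for such estimates (the Steklov formulation \eqref{stek_p_eq}) is the one you use, so your route coincides with what the authors have in mind.
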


Next we introduce the research subject of this paper, the parabolic Poincar\'e inequality of solutions to \eqref{eq}. We state a general version of the result which covers the full range of solutions $u\in  L^q_{\loc}(0,T;W^{1,q}_{\loc} (\Om;\mathbb{R}^N))$, $1\le q<\infty$, see Remark \ref{rem_sol}. In particular, it holds for very weak solutions to \eqref{eq}.

\begin{lemma}\label{poincare_lem}
 Let $1\le q<\infty$. Assume that $|F|\in L^1_{\loc}(\Omega_T)$ and that $u\in  L^q_{\loc}(0,T;W^{1,q}_{\loc} (\Om;\mathbb{R}^N))$ satisfies \eqref{weak_sol}
 for every $\varphi\in C_0^\infty(\Om_T;\mathbb{R}^N)$.
Then there exists a constant $c=c(n,q,L)$ such that
	\begin{equation}\label{poincare_ie}
    \fiint_{Q}\frac{|u-u_{Q }|^{q}}{\rho^{q}}\,dz
            \le c\fiint_{Q}|\na u|^{q}\,dz       
            +c\left(\fiint_{Q } |F| \,dz\right)^{q}
    \end{equation}
    for every parabolic cylinder $Q=Q_{\rho} \Subset\Omega_T$.
\end{lemma}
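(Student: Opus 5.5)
The plan is the standard splitting into a spatial and a temporal oscillation, using a weighted spatial mean. Fix a cutoff $\eta\in C_0^\infty(B_\rho(x_0))$ with $\eta\ge0$, $\fint_{B_\rho}\eta\,dx=1$, $\|\eta\|_\infty\le c(n)$ and $\|\na\eta\|_\infty\le c(n)/\rho$. Define the weighted mean
\[
u_\eta(t)=\fint_{B_\rho}u(x,t)\eta(x)\,dx,
\]
which is defined for a.e.\ $t\in I_\rho(t_0)$. Since $\fiint_Q|u-u_Q|^q\,dz\le 2^q\fiint_Q|u-a|^q\,dz$ for any constant $a$, it suffices to bound $\fiint_Q|u-a|^q\,dz$ for a suitable $a$. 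We choose $a=u_\eta(s)$, average over $s\in I=I_\rho(t_0)$, and split
\[
|u(x,t)-u_\eta(s)|\le |u(x,t)-u_\eta(t)|+|u_\eta(t)-u_\eta(s)|.
\]

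\emph{Spatial term.} For a.e.\ $t$, the weighted Poincaré inequality on the ball gives
\[
\fint_{B_\rho}|u(x,t)-u_\eta(t)|^q\,dx\le c\,\rho^q\fint_{B_\rho}|\na u(x,t)|^q\,dx,
\]
with $c=c(n,q)$. To see this, write $u-u_\eta=(u-u_B)-(u_\eta-u_B)$, note $|u_\eta-u_B|\le \|\eta\|_\infty\fint_B|u-u_B|\,dx$, and apply the usual Poincaré inequality. Integrating in $t$ yields the first term on the right of \eqref{poincare_ie}. This step uses only $u(\cdot,t)\in W^{1,q}$, so it works for very weak solutions.

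\emph{Temporal term.} This is the main point. I show that for a.e.\ $t_1<t_2$ in $I$,
\[
|u_\eta(t_2)-u_\eta(t_1)|\le \frac{1}{|B_\rho|}\iint_{B_\rho\times(t_1,t_2)}\bigl(|\mathcal A(z,\na u)|+|F|\bigr)|\na\eta|\,dz .
\]
Using $|\mathcal A(z,\na u)|\le L|\na u|$ and $|\na\eta|\le c/\rho$, the right-hand side is at most
\[
\frac{c\,|I|}{\rho}\fiint_Q(L|\na u|+|F|)\,dz=c\rho\fiint_Q(L|\na u|+|F|)\,dz,
\]
since $|I|=2\rho^2$.

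To derive the estimate, test \eqref{weak_sol} with $\varphi(x,t)=\eta(x)\zeta_\ep(t)e_j$, where $\zeta_\ep$ is a Lipschitz approximation of $1_{(t_1,t_2)}$ that is mollified to be smooth; this is admissible because $Q\Subset\Om_T$. Then pass $\ep\to0$ at Lebesgue points $t_1,t_2$ of $u_\eta$, which lies in $L^1(I)$ since $u\in L^q$. This limit step is where care is needed, because we have no time regularity. It works since only $u\in L^1_{\loc}$ and $\mathcal A(z,\na u),F\in L^1_{\loc}$ enter, and the $\varphi_t$ term produces exactly the differences of Lebesgue-point averages of $u_\eta$. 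Hence the estimate holds for a.e.\ $t_1,t_2$.

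\emph{Conclusion.} Combining the two terms,
\[
\fint_I\fiint_Q|u-u_\eta(s)|^q\,dz\,ds\le c\rho^q\fiint_Q|\na u|^q\,dz+c\rho^q\Bigl(\fiint_Q(|\na u|+|F|)\,dz\Bigr)^q .
\]
By Hölder's inequality, $\bigl(\fiint_Q|\na u|\,dz\bigr)^q\le\fiint_Q|\na u|^q\,dz$. Dividing by $\rho^q$ gives \eqref{poincare_ie} with $c=c(n,q,L)$; the constant does not depend on $\nu$, consistent with the statement. The main obstacle is justifying the time-difference identity for $u_\eta$ without any time derivative, which the Lebesgue-point argument above handles.
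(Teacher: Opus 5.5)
Your proof is correct and uses the same ingredients as the paper: spatial Poincar\'e on time slices, control of the time oscillation of the weighted mean $u_\eta(t)=(u\eta)_B(t)$ by testing \eqref{weak_sol} with $\eta(x)\zeta(t)e_j$, and H\"older's inequality to bound $\bigl(\fiint_Q|\na u|\,dz\bigr)^q$. The differences are cosmetic: you compare $u$ directly with $u_\eta(t)$ and replace $u_Q$ by the constant $u_\eta(s)$, whereas the paper passes through $u_B(t)$; you are also more careful than the paper about mollifying the test function and passing to the limit at Lebesgue points.
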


\begin{proof}
    We denote $Q=B\times I$ and
    \[
    u_{B}(t)=\fint_{B} u(x,t)\,dx,
    \]
    where $0<t<T$.
    By the Poincar\'e inequality in the spatial direction, we conclude that
    \begin{equation}\label{poincare_cal_1}
    \begin{split}
    \fiint_{Q}\frac{|u-u_{Q }|^{q}}{\rho^{q}}\,dz
    &=\fiint_{Q }\frac{|u(x,t)-u_{B}(t)+ u_{B}(t) -u_{Q}|^{q}}{\rho^{q}}\,dz\\
            &\le c\fiint_{Q }\frac{|u(x,t)-u_{B}(t) |^q }{\rho^{q}}\,dz 
            + c\fiint_{Q }\frac{|u_{B}(t) -u_{Q}|^{q}}{\rho^{q}}\,dz\\
            &\le c\fiint_{Q } |\na u|^q\,dz
            + c\fiint_{Q }\frac{|u_{B}(t) -u_{Q}|^{q}}{\rho^{q}}\,dz
    \end{split}
    \end{equation}
    for some constant $c=c(n,q)$.
    
    To estimate the second term on the right-hand side of \eqref{poincare_cal_1}, we observe that
    \[
    u_{Q } =\fint_{I }\fint_{B} u(y,s)\,dy\,ds= \fint_{I} u_{B}(s)\,ds
    \]
    and obtain
   \begin{align*}
       \begin{split}
            \fiint_{Q }|u_{B }(t) -u_{Q}|^q \,dz
            &= \fint_{I} |u_{B}(t) -u_{Q }|^q \,dt  \\
    &=  \fint_{I} \left| \fint_{I}  ( u_{B }(t) -u_{B }(s)   ) \,ds \right|^q \,dt\\
    &\le \fint_{I} \fint_{I} |u_{B}(t)- u_{B}(s)|^q \,ds\,dt.
       \end{split}
   \end{align*}
   To proceed further, we will estimate the difference of integral averages over time slices by using \eqref{eq}. Consider a non-negative cutoff function $\varphi\in C_0^1(B)$ in the spatial direction such that
    \begin{equation}\label{gluing_eta}
        \varphi_{B}=\fint_{B} \varphi\,dx=1
        \quad\text{and}\quad 
        \| \varphi \|_{L^\infty(B )}+ \rho\| \na \varphi \|_{L^\infty( B )} \le c
    \end{equation}
    for a constant $c$. Then we have
    \begin{equation}\label{average_diff_cal}
        \begin{split}
            |u_{B }(t)- u_{B}(s)|^q
            &\le c|u_{B }(t)- (u\varphi )_{B}(t)|^q + c| u_{B}(s)- (u\varphi)_{B}(s)|^q \\
            &\qquad + c| (u\varphi )_{B}(t) - (u\varphi)_{B}(s) |^q 
        \end{split}
    \end{equation}
    for some constant $c=c(q)$. Therefore, we have
    \begin{align*}
    \fiint_{Q }|u_{B }(t) -u_{Q }|^q \,dz
            &\le c\fint_{I }|u_{B }(t) -(u\varphi)_{B }(t)|^q \,dt\\
            &\qquad+ c\esssup_{ t,s\in I } |(u\varphi)_{B }(t)- (u\varphi)_{B }(s)|^q.
    \end{align*}
    Since $u_{B}(t)=u_{B}(t) \varphi_{B}$, we get
    \begin{align}\label{weighted_poincare}
            \begin{split}
            \fint_{I }|u_{B }(t) -(u\varphi)_{B }(t)|^q \,dt
            &=\fint_{I }\left| \fint_{B }  (u(x,t)-u_{B }(t)    )\varphi(x)  \,dx \right|^q \,dt\\
            &\le c\| \varphi \|_{L^\infty (B  )  }\fiint_{Q } |u(x,t)- u_{B }(t)|^q\,dz\\
            &\le c \rho^q\fiint_{Q } |\na u|^q\,dz
            \end{split}
    \end{align}
    for some constant $c=c(n,q)$. To obtain the last inequality, we used the Poincar\'e inequality in the spatial direction. 
   By \eqref{poincare_cal_1}, we have
    \begin{equation}\label{poincare_cal_2}
    \begin{split}
            \fiint_{Q}\frac{|u-u_{Q}|^{q}}{\rho^{q}}\,dz 
            &\le c\fiint_{Q } |\na u|^q\,dz\\
            &\qquad+c\rho^{-q}\esssup_{t_1,t_2\in I}| (u\varphi)_{B}(t_2) -(u\varphi)_{B}(t_1) |^q
    \end{split}
    \end{equation}
    for some constant $c=c(n,q)$.
    
    To estimate the last term on the right-hand side of \eqref{poincare_cal_2}, we consider the following cutoff function in the time direction. Let $t_1,t_2\in I$ with $t_1<t_2$ and let
    \begin{align*}
        \phi_\delta(t)=
    \begin{cases}
        \frac{t-t_1+\delta}{\delta},&\quad  t_1-\delta\le t\le t_1,\\
        1,&\quad t_1\le t\le t_2,\\
        \frac{-t+t_2+\delta}{\delta},&\quad  t_2 \le t\le t_2+\delta,\\
        0,&\quad\text{otherwise},
    \end{cases}
    \end{align*}
    where $\delta>0$ is sufficiently small so that $\phi_\delta\in C_0^{0,1}(I)$. We have $\phi_\delta \varphi\in C_0^{0,1}(Q )$ and  for each unit vector $e_i\in \mathbb{R}^N$, $i=1,\dots,N$, we may apply $e_i \phi_\delta \varphi  \in C_0^{0,1}(Q ;\mathbb{R}^N)$
    as a test function in \eqref{eq}. It follows that
    \[
    -\iint_{Q}  u\cdot e_i\varphi (\phi_\delta)_t\,dz  = \iint_{Q}  (  -\mathcal{A}(z,\na u) + F )\cdot e_i\na \varphi \phi_\delta  \,dz,
    \]
     $i=1,\dots,N$, and thus we may write it as a vector valued integral as
    \[
    -\iint_{Q}  u\varphi (\phi_\delta)_t\,dz  = \iint_{Q}  (  -\mathcal{A}(z,\na u) + F )\cdot \na \varphi \phi_\delta  \,dz.
    \]
    For the left-hand side, we have
    \[
    -\iint_{Q}  u\varphi (\phi_\delta)_t\,dz
    = \frac{1}{\delta}\int_{t_2}^{t_2+\delta} \int_{B} u\varphi \,dx\,dt - \frac{1}{\delta}\int_{t_1-\delta}^{t_1} \int_{B} u\varphi \,dx\,dt
    \]
    while, on the right-hand side, we use \eqref{p_ellipticity} and \eqref{gluing_eta} to get
    \[
    \left| \iint_{Q }  (  -\mathcal{A}(z,\na u) + F )\cdot \na \varphi \phi_\delta  \,dz\right| 
    \le \frac{c}{\rho}\iint_{Q }  (L|\na u| +|F|) \,dz
    \]
    for some constant $c$.
    Combining the estimates above, we obtain 
    \[
    \biggl|\frac{1}{\delta}\int_{t_2}^{t_2+\delta} \int_{B} u\varphi \,dx\,dt - \frac{1}{\delta}\int_{t_1-\delta}^{t_1} \int_{B} u\varphi \,dx\,dt \biggr|\\ 
    \le\frac{c}{\rho}\iint_{Q} ( |\na u|+ |F| )\,dz.
    \]
   This implies that  
    \begin{align*}
    &\biggl| \frac{1}{\delta}\int_{t_2}^{t_2+\delta} (u\varphi)_{B }(t)\,dt-\frac{1}{\delta}\int_{t_1}^{t_1-\delta} (u\varphi)_{B }(t)\,dt    \biggr| \\
    &\qquad=\frac{1}{|B|}\biggl|\frac{1}{\delta}\int_{t_2}^{t_2+\delta} \int_{B} u\varphi \,dx\,dt - \frac{1}{\delta}\int_{t_1-\delta}^{t_1} \int_{B} u\varphi \,dx\,dt \biggr|\\ 
     &\qquad\le\frac{c}{\rho|B|}\iint_{Q} ( |\na u|+ |F| )\,dz
    = c\rho\fiint_{Q} ( |\na u|+ |F| )\,dz,
    \end{align*}
    where $c=c(L)$. Recalling that $t_1,t_2$ are chosen to be arbitrary, letting $\delta$ to $0$, we conclude
    \begin{equation}\label{poincare_cal_3}
    \esssup_{t_1,t_2\in I}| (u\varphi)_{B}(t_2) -(u\varphi)_{B}(t_1) |\le c \rho\fiint_{Q} ( |\na u|+ |F| )\,dz.
    \end{equation}
    Hence, by \eqref{poincare_cal_2} we arrive at
    \begin{align*}
    \fiint_{Q}\frac{|u-u_{Q }|^{q}}{\rho^{q}}\,dz
    &\le c\fiint_{Q } |\na u|^q \,dz\\
    &\qquad+c\rho^{-q}\esssup_{t_1,t_2\in I}| (u\varphi)_{B}(t_2) -(u\varphi)_{B}(t_1) |^q\\
    &\le c\fiint_{Q } |\na u|^q \,dz
    +c \left(\fiint_{Q} ( |\na u|+ |F| )\,dz\right)^q\\
    &\le c\fiint_{Q } |\na u|^q \,dz+c\left( \fiint_{Q} |F| \,dz \right)^q.
    \end{align*}
    This completes the proof.
\end{proof}

\section{Self-improving properties of systems with 2-growth}
In this section we discuss a self-improving property of the parabolic Poincar\'e inequality. Assume that $1\le q<\infty$ and $|F|\in L^q_{\loc}(\Om_T)$. Then \eqref{poincare_ie} implies that
   \begin{equation}\label{poincare_iep}
 \fiint_{Q}\frac{|u-u_{Q }|^{q}}{\rho^{q}}\,dz
            \le c\fiint_{Q}(|\na u|+|F|)^q \,dz
\end{equation}
for every parabolic cylinder $Q=Q_{\rho} \Subset\Omega_T$.
 For the proof, we refer to the corresponding proof for a Poincar\'e inequality on metric measure spaces with the volume decay bound \eqref{vol_dec}, see \cite[Theorem 9.1.15]{HKST2015}.
We note that the proof only applies the parabolic Poincar\'e inequality \eqref{poincare_iep} and does not apply any other information on the system \eqref{eq}.

\begin{theorem}\label{main_left}
Let $1\le q<\infty$. Assume that $|F|\in L^q_{\loc}(\Omega_T)$ and that $u\in  L^q_{\loc}(0,T;W^{1,q}_{\loc} (\Om;\mathbb{R}^N))$ satisfies \eqref{weak_sol} for every $\varphi\in C_0^\infty(\Om_T;\mathbb{R}^N)$. 
\begin{itemize}
    \item[(i)] If $1\le q<n+2$, then for $1\le q\le q^*<\tfrac{q(n+2)}{n+2-q}$, there exists a constant $c=c(n,q,q^*,L)$ such that 
    \[
    \left( \fiint_{Q } \frac{|u-u_{Q}|^{q^*}}{\rho^{q^*}}\,dz\right)^\frac{1}{q^*}
    \le c\left( \fiint_{Q } ( |\na u| + |F| ) ^q\,dz\right)^\frac{1}{q}
    \]
    for every parabolic cylinder $Q=Q_{\rho}\Subset\Omega_T$.
    \item[(ii)] If $q=n+2$, then there exists a constant $c=c(n,L)$ such that
    \[
    \fiint_Q\exp\left(\left(
   \frac{|u-u_{Q}|}{\displaystyle c\rho\left(\fiint_{Q}(|\na u|+|F|)^{n+2} \,dz\right)^{\frac{1}{n+2}}}\right)^{\frac{n+2}{n+1}}\right)\,dz\le c
    \]
     for every parabolic cylinder $Q=Q_{\rho}\Subset\Omega_T$.
    \item[(iii)] If $q>n+2$, then there exists a constant $c=c(n,q,L)$ such that
    \[
    \esssup_Q\frac{|u-u_{Q}|}{\rho}
    \le c\left( \fiint_{Q } ( |\na u| + |F| ) ^q\,dz\right)^\frac{1}{q}
    \]
     for every parabolic cylinder $Q=Q_{\rho}\Subset\Omega_T$.
\end{itemize}
\end{theorem}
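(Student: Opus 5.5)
The plan is to treat $\mathbb{R}^{n+1}$ with the parabolic metric \eqref{parabolic_metric} and Lebesgue measure as a doubling metric measure space whose volume growth has exponent $n+2$ by \eqref{vol_dec}. In this space, \eqref{poincare_iep} is a $(q,q)$-Poincar\'e inequality for the pair $(u,g)$ with $g=|\na u|+|F|$. I would then run the standard telescoping/chaining argument of \cite[Theorem 9.1.15]{HKST2015} directly, using only \eqref{poincare_iep}. Fix $Q=Q_\rho(z_0)\Subset\Omega_T$. For a Lebesgue point $z\in Q$ of $u$, consider a chain of parabolic cylinders $Q_i\ni z$ with radii $\rho_i=2^{-i}\rho$, with $Q_0=Q$ and each $Q_{i+1}\subset Q_i\subset Q$. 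Since the chain stays inside $Q$, no enlargement of the cylinder is needed, which matters because \eqref{poincare_iep} is only available for cylinders compactly contained in $\Omega_T$. Then
\[
|u(z)-u_Q|\le\sum_{i\ge0}|u_{Q_{i+1}}-u_{Q_i}|\le c\sum_{i\ge0}\rho_i\Bigl(\fiint_{Q_i}g^q\,dz\Bigr)^{1/q},
\]
using $|Q_i|\le 2^{n+2}|Q_{i+1}|$ together with \eqref{poincare_iep} on each $Q_i$.

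Next I would split the sum at a level $k$. The normalized bound $\fiint_{Q_i}g^q\le (|Q|/|Q_i|)\fiint_Q g^q = 2^{i(n+2)}\fiint_Q g^q$ controls the tail. The head is controlled by a fractional maximal function
\[
M_{\alpha}g(z)=\sup_{Q'\ni z,\,Q'\subset Q}\rho'^{\,\alpha}\Bigl(\fiint_{Q'}g^q\Bigr)^{1/q}.
\]
\begin{itemize}
\item[(iii)] For $q>n+2$ I would not split at all. The series $\sum_i\rho 2^{-i}2^{i(n+2)/q}$ is geometric, so $|u(z)-u_Q|\le c\rho(\fiint_Q g^q)^{1/q}$ at almost every $z$, which gives the Morrey estimate.
\item[(i)] For $q<n+2$ I would use the standard route through weak-type estimates. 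The optimized splitting gives $|u(z)-u_Q|\le c\rho^{1-(n+2)/q\cdot\theta}\cdots$, and I would write it more cleanly in the Hedberg form
\[
|u(z)-u_Q|\le c\,\rho\,\bigl(M g^q(z)\bigr)^{\frac{s}{q}}\Bigl(\fiint_Q g^q\Bigr)^{\frac{1-s}{q}}\quad\text{for suitable }s<1,
\]
where $M$ is the maximal function restricted to subcylinders of $Q$. The weak $(1,1)$ bound for $M$ then yields a weak-type $L^{q(n+2)/(n+2-q)}$ estimate for $(u-u_Q)/\rho$, normalized by $(\fiint_Q g^q)^{1/q}$. Integrating the weak-type estimate gives the strong bound for every $q^*$ strictly below the Sobolev exponent; this is exactly why the statement only claims $q^*<q(n+2)/(n+2-q)$.
\item[(ii)] For $q=n+2$ the same splitting gives a distribution estimate $|\{z\in Q:|u-u_Q|>\lambda\rho A\}|\le c|Q|e^{-c\lambda^{(n+2)/(n+1)}}$, where $A=(\fiint_Q g^{n+2})^{1/(n+2)}$. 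I expect to obtain it by choosing the cut level $k\sim\lambda^{(n+2)/(n+1)}$ and applying the weak $(1,1)$ maximal bound to $g^{n+2}$. This is the Trudinger-type step as in \cite{HKST2015}, and integrating the distribution estimate yields the exponential integrability.
\end{itemize}

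The main obstacle is case (ii): tracking the exponent $(n+2)/(n+1)$ requires a Hölder sum over the chain. Each term is $\rho_i(\fiint_{Q_i}g^{n+2})^{1/(n+2)}=c(\int_{Q_i}g^{n+2})^{1/(n+2)}$ with constants independent of $i$. Applying Hölder with exponents $n+2$ and $(n+2)/(n+1)$ to the first $k$ terms gives
\[
\sum_{i<k}\le c\,k^{(n+1)/(n+2)}\Bigl(\int_Q g^{n+2}\Bigr)^{1/(n+2)}\approx c\,k^{(n+1)/(n+2)}\rho A.
\]
The remaining terms are controlled on the complement of a maximal-function superlevel set. This is the source of the exponent.

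A subtle point throughout is that chaining must be done within $Q$, using uncentered parabolic subcylinders. Pointwise values are replaced by Lebesgue-point limits, which is justified because $u\in L^q_{\loc}$ and parabolic cylinders form a differentiation basis for a doubling measure.
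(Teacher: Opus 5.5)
Your parts (i) and (iii) are correct and follow the route the paper takes: the chaining proof of \cite[Theorem 9.1.15]{HKST2015} with homogeneous dimension $n+2$, using only \eqref{poincare_iep}. The Hedberg bound with $s=1-\frac{q}{n+2}$ plus the weak $(1,1)$ estimate gives exactly the range $q^*<\frac{q(n+2)}{n+2-q}$. Keeping the chain inside $Q$ correctly avoids enlarged cylinders.

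The gap is in (ii), at the Hölder step. Your cylinders are nested, $Q\supset Q_1\supset Q_2\supset\cdots$, so Hölder gives
\[
\sum_{i<k}\Bigl(\iint_{Q_i}g^{n+2}\,dz\Bigr)^{\frac{1}{n+2}}\le k^{\frac{n+1}{n+2}}\Bigl(\sum_{i<k}\iint_{Q_i}g^{n+2}\,dz\Bigr)^{\frac{1}{n+2}}.
\]
The sum on the right is bounded only by $k\iint_Q g^{n+2}\,dz$, not by $\iint_Q g^{n+2}\,dz$. The head is therefore $ck\rho A$, not $ck^{\frac{n+1}{n+2}}\rho A$.

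Excluding a superlevel set of $M(g^{n+2}1_Q)$ does not rescue this. Suppose $g^{n+2}$ has most of its mass in a tiny cylinder at parabolic distance about $2^{-j}\rho$ from $z$. Then the chain terms at all scales between $2^{-j}\rho$ and $\rho$ can each be of size $(\iint_Q g^{n+2})^{1/(n+2)}$, while $M(g^{n+2}1_Q)(z)$ is only about $2^{j(n+2)}A^{n+2}$. With the correct count you must take $k\sim\lambda$. You then get decay $e^{-c\lambda}$, i.e. exponential integrability with exponent $1$, not $\frac{n+2}{n+1}$.

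The repair is to chain with $L^1$ oscillations and use the case $q=1$ of Lemma \ref{poincare_lem}, whose hypotheses hold under those of (ii). This gives
\[
|u_{Q_{i+1}}-u_{Q_i}|\le 2^{n+2}\fiint_{Q_i}|u-u_{Q_i}|\,dz\le c\rho_i\fiint_{Q_i}g\,dz.
\]
Since $z\in Q_i$, we have $Q_i\subset Q_{2\rho_i}(z)$. Let $A_j=Q_{2\rho_j}(z)\setminus Q_{2\rho_{j+1}}(z)$ be the disjoint parabolic annuli around $z$. Splitting each $Q_{2\rho_i}(z)\cap Q$ into these annuli and interchanging the order of summation yields
\[
|u(z)-u_Q|\le c\sum_{j<k}\rho_j^{-(n+1)}\iint_{A_j\cap Q}g\,dz+c\rho\,2^{-k}\bigl(M(g^{n+2}1_Q)(z)\bigr)^{\frac{1}{n+2}}.
\]
Hölder on each annulus gives $\rho_j^{-(n+1)}\iint_{A_j\cap Q}g\,dz\le c\bigl(\iint_{A_j\cap Q}g^{n+2}\,dz\bigr)^{\frac{1}{n+2}}$. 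Hölder in $j$ then legitimately produces $ck^{\frac{n+1}{n+2}}\bigl(\iint_Q g^{n+2}\,dz\bigr)^{\frac{1}{n+2}}$, because the $A_j$ are disjoint.

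From there your choice $k\sim\lambda^{\frac{n+2}{n+1}}$ and the weak $(1,1)$ bound for $M(g^{n+2}1_Q)$ finish the proof. The sharp Trudinger exponent comes from this Riesz-potential-type bound, which uses the $(1,1)$-inequality. A chain estimate built from $L^{n+2}$ averages over nested cylinders does not see the disjointness needed for it.
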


\begin{remark}\label{main_left_rmk}
Let $1\le q<\infty$. Assume that $|F|\in L^q_{\loc}(\Omega_T)$ and that $u\in  L^q_{\loc}(0,T;W^{1,q}_{\loc} (\Om;\mathbb{R}^N))$ satisfies \eqref{weak_sol} for every $\varphi\in C_0^\infty(\Om_T;\mathbb{R}^N)$.
\begin{itemize}
\item[(1)] If $1\le q<n+2$, then $u\in  L^q_{\loc}(\Om_T;\mathbb{R}^N)$ for every $q$ with  $1\le q\le q^*<\tfrac{q(n+2)}{n+2-q}$.
\item[(2)] If $q=n+2$, then
\begin{align*}
    \fiint_{Q}|u-u_{Q }|\,dz
    &\le\left(\fiint_{Q}|u-u_{Q }|^{n+2}\,dz\right)^{\frac{1}{n+2}}\\
    &\le c\rho\left(\fiint_{Q}(|\na u|+|F|)^{n+2}\,dz\right)^{\frac{1}{n+2}}\\
    &\le c\left(\iint_{Q}(|\na u|+|F|)^{n+2}\,dz\right)^{\frac{1}{n+2}}.
\end{align*}
It follows that $u$ is a function of bounded mean oscillation (BMO) over parabolic cylinders that are contained in every compact subset of $\Om_T$.
\item[(3)] If $q>n+2$, then
\begin{align*}
    |u(z)-u(z')|
    &\le|u(z)-u_Q|+|u_Q-u(z')|\\
    &\le\esssup_Q|u-u_{Q}|\\
    &\le c\rho\left( \fiint_{Q } ( |\na u| + |F| ) ^q\,dz\right)^\frac{1}{q}\\
    &\le c\rho^{1-\frac{n+2}{q}}\left( \iint_{Q } ( |\na u| + |F| ) ^q\,dz\right)^\frac{1}{q}
\end{align*}
for almost every $z,z'\in Q$. It follows that $u$ is H\"older continuous in every compact subset of $\Om_T$.
\end{itemize}
\end{remark}

Next we present applications of the results above for solutions to \eqref{eq}.
We begin with a reverse H\"older inequality for the mean oscillation over parabolic cylinders.

\begin{theorem}\label{main_right_coro}
Let $|F|\in L^2_{\loc}(\Om_T)$and assume that $u\in L^2_{\loc}(0,T;W^{1,2}_{\loc}(\Om;\mathbb{R}^N))$ is a weak solution to \eqref{eq}. 
    Let $2\le q<2(1+\tfrac{2}{n})$ and let $Q =Q_\rho$ be a  parabolic cylinder such that $2Q  \Subset \Om_T$.
    Then there exists a constant $c=c(n,N,q,\nu,L)$ such that
    \begin{equation}\label{rh_rhs}
    \biggl(\fiint_{Q  } \frac{|u-u_{Q  }|^q}{\rho^q}\,dz\biggr)^\frac{1}{q}
            \le  c\fiint_{ 2Q  } \frac{|u-u_{2Q }| }{ 2\rho}\,dz\\
            + c\biggl(\fiint_{2Q  }  |F|^2  \,dz\biggr)^\frac{1}{2}.
    \end{equation}
       \end{theorem}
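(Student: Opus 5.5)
Theorem~\ref{main_left}(i) with $q=2$ gives every exponent $q^*<\tfrac{2(n+2)}{n}=2(1+\tfrac2n)$, which is exactly the range of $q$ in the statement. The plan is to combine three ingredients: Theorem~\ref{main_left}(i) with $q=2$ on $Q$, the Caccioppoli inequality of Lemma~\ref{caccio_lem} to control $\na u$ on $Q$ by the $L^2$ oscillation on a slightly larger cylinder, and an interpolation/iteration argument to lower the $L^2$ oscillation on the right to the $L^1$ oscillation on $2Q$.

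\emph{Step 1: Sobolev--Poincar\'e plus Caccioppoli.} Apply Theorem~\ref{main_left}(i) with $q=2$, $q^*=q$ on $Q_{\rho}$ to get
\[
\biggl(\fiint_{Q}\frac{|u-u_Q|^q}{\rho^q}\,dz\biggr)^{\frac1q}\le c\biggl(\fiint_Q(|\na u|+|F|)^2\,dz\biggr)^{\frac12}.
\]
Then apply Lemma~\ref{caccio_lem} with $\rho_1=\rho$ and $\rho_2=\tfrac32\rho$ to bound $\fiint_Q|\na u|^2$ by the $L^2$ oscillation on $Q_{3\rho/2}$ plus the $F$ term. This yields the desired estimate, but with $L^2$ oscillation on the right. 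The same two steps work for any pair $\rho\le r_1<r_2\le 2\rho$, provided we use the $L^q$ form of Theorem~\ref{main_left} on $Q_{r_1}$ together with Lemma~\ref{caccio_lem} on $Q_{r_1}\subset Q_{r_2}$. We track the constant as $c\,(r_2/(r_2-r_1))^{a}$ for some power $a$, using $|u-u_{Q_{r_1}}|$ versus $|u-u_{Q_{r_2}}|$ and the volume ratios \eqref{vol_dec}.

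\emph{Step 2: Lowering the exponent.} Set $\Phi(r)=\bigl(\fiint_{Q_r}|u-u_{Q_r}|^q\,dz\bigr)^{1/q}$. Hölder interpolation gives $\|f\|_{L^2}\le\|f\|_{L^1}^{\theta}\|f\|_{L^q}^{1-\theta}$ with $\tfrac12=\theta+\tfrac{1-\theta}{q}$, and $\theta>0$ since $q>2$. (If $q=2$, first use a $q^*$ slightly above $2$ and then Hölder.) Step 1 on the pair $(r_1,r_2)$ then gives
\[
\Phi(r_1)\le c\Bigl(\tfrac{\rho}{r_2-r_1}\Bigr)^{a}\Phi(r_2)^{1-\theta}\Bigl(\fiint_{Q_{r_2}}|u-u_{Q_{r_2}}|\,dz\Bigr)^{\theta}+c\rho\Bigl(\fiint_{2Q}|F|^2\,dz\Bigr)^{\frac12}.
\]
In this inequality we replace $u_{Q_{r_2}}$ by $u_{2Q}$ at the cost of a factor $2$, and we enlarge the averages to $2Q$ at the cost of volume factors that depend only on $n$. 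Young's inequality with $\varepsilon$ then gives
\[
\Phi(r_1)\le\tfrac12\Phi(r_2)+c\Bigl(\tfrac{\rho}{r_2-r_1}\Bigr)^{a/\theta}\fiint_{2Q}|u-u_{2Q}|\,dz+c\rho\Bigl(\fiint_{2Q}|F|^2\,dz\Bigr)^{\frac12}.
\]

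\emph{Step 3: Absorption.} The standard iteration lemma for functions on $[\rho,2\rho]$ absorbs the $\tfrac12\Phi(r_2)$ term and produces \eqref{rh_rhs} after dividing by $\rho$. One needs $\Phi(2\rho)<\infty$ for the lemma to apply, which holds because Theorem~\ref{main_left}(i) with $2Q\Subset\Om_T$ gives $u\in L^q_{\loc}$.

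The main obstacle is Step 2, specifically the bookkeeping of constants in $r_1,r_2$. Theorem~\ref{main_left} is stated for a single cylinder, so I must check that applying it on $Q_{r_1}$ and Caccioppoli on $Q_{r_1}\subset Q_{r_2}$ only produces polynomial factors in $\rho/(r_2-r_1)$. I must also check that the change of mean value $u_{Q_{r_2}}\to u_{Q_{r_1}}$ is controlled. Both look harmless, since every constant involved is a power of a radius ratio bounded by $2$ or of $\rho/(r_2-r_1)$.
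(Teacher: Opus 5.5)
Your proposal is correct and follows essentially the same route as the paper: Theorem~\ref{main_left}(i) with exponent $2$ on the inner cylinder, the Caccioppoli inequality of Lemma~\ref{caccio_lem} between concentric cylinders $Q_{r_1}\subset Q_{r_2}$, $L^1$--$L^q$ interpolation followed by Young's inequality, and the standard absorption lemma on $[\rho,2\rho]$. Your separate treatment of the endpoint $q=2$ is a worthwhile detail that the paper's write-up glosses over: there the interpolation exponent degenerates, so you first prove the estimate for some larger exponent and then apply H\"older. Also note that the absorption lemma needs $\sup_{r\in[\rho,2\rho]}\Phi(r)<\infty$, not just $\Phi(2\rho)<\infty$, and this follows from $u\in L^q(2Q)$ exactly as you indicate.
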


\begin{proof}
 Let $0< \rho < \rho_1< \rho_2<2\rho$ and denote $Q_1=\tfrac{\rho_1}{\rho}Q $ and $Q_2=\tfrac{\rho_2}{\rho}Q $. 
    By Theorem~\ref{main_left}, for any $2\le q<2(1+\tfrac{2}{n})$, there exists a constant $c=c(n,q,L)$ such that
    \begin{align*}
            &\biggl(\fiint_{Q_1} \frac{|u-u_{Q_1 }|^q}{\rho_1^q}\,dz\biggr)^\frac{1}{q}
            \le  c\biggl(\fiint_{ Q_1} (|\na u| + |F|)^2\,dz \biggr)^\frac{1}{2}\\
            &\qquad\le  c\biggl(\fiint_{Q_1}  |\na u|^2  \,dz\biggr)^\frac{1}{2}
             +c\biggl(\fiint_{Q_1}  |F|^2  \,dz\biggr)^\frac{1}{2}.
    \end{align*}
    For the first term on the right-hand side, by Lemma~\ref{caccio_lem}, we have
      \begin{align*}
            \biggl(\fiint_{Q_1}  |\na u|^2  \,dz\biggr)^\frac{1}{2}
            \le  c\biggl(\fiint_{Q_2} \frac{|u-u_{Q_2}|^2}{(\rho_2- \rho_1)^2}\,dz \biggr)^\frac{1}{2}
             +c\biggl(\fiint_{Q_2}  |F|^2  \,dz\biggr)^\frac{1}{2}
    \end{align*}
    for some constant $c=c(n,N,\nu,L)$.
    It follows that
      \begin{align*}
            \biggl(\fiint_{Q_1} \frac{|u-u_{Q_1 }|^q}{\rho_1^q}\,dz\biggr)^\frac{1}{q}
            &\le  c\biggl(\fiint_{Q_2} \frac{|u-u_{Q_2}|^2}{(\rho_2- \rho_1)^2}\,dz \biggr)^\frac{1}{2}
             +c\biggl(\fiint_{Q_2}  |F|^2  \,dz\biggr)^\frac{1}{2}
    \end{align*}
     for some constant $c=c(n,N,q,\nu,L)$.
    We estimate the first term on the right-hand side as
    \begin{align*}
            &\biggl(\fiint_{Q_2} \frac{|u-u_{Q_2}|^2}{(\rho_2-\rho_1)^2}\,dz\biggr)^\frac{1}{2}
            =\frac{\rho_2}{\rho_2-\rho_1}\biggl( \fiint_{Q_2} \frac{|u-u_{Q_2}|^2}{\rho_2^2}\,dz \biggr)^\frac{1}{2}\\
            &\qquad\le\frac{\rho_2}{\rho_2-\rho_1}\biggl( \fiint_{Q_2} \frac{|u-u_{Q_2}|^q}{ \rho_2^q}\,dz \biggr)^\frac{\theta}{q}
            \biggl( \fiint_{Q_2} \frac{|u-u_{Q_2}| }{\rho_2}\,dz \biggr)^{1-\theta},
            \end{align*}
    where $0<\theta<1$ is such that $\tfrac{1}{2}=\tfrac{\theta}{q}+1-\theta$. 
    By applying Young's inequality, we obtain
    \begin{align*}
            &\biggl(\fiint_{Q_1} \frac{|u-u_{Q_1}|^q}{\rho_1^q}\,dz\biggr)^\frac{1}{q}
            \le \frac{1}{2}\biggl( \fiint_{Q_2} \frac{|u-u_{Q_2}|^q}{ \rho_2^q}\,dz \biggr)^\frac{1}{q}\\
            &\qquad + c\biggl(\frac{\rho_2}{\rho_2-\rho_1}\biggr)^\frac{1}{1-\theta} \biggl(\fiint_{Q_2} \frac{|u-u_{Q_2}| }{\rho_2}\,dz \biggr)^\frac{1}{2}
             + c\biggl(\fiint_{Q_2}  |F|^2  \,dz\biggr)^\frac{1}{2},
    \end{align*}
    where $c=(n,N,q,\nu,L)$.
    The estimate \eqref{rh_rhs} follows from a standard absorbtion result, see \cite[Lemma~6.1]{MR1962933}.
\end{proof}

The following lemma provides a proof of a reverse H\"older inequality for the gradient, see \cite{MR652852}, by combining the Caccioppoli estimate and parabolic Poincar\'e inequality. 

\begin{theorem}\label{grad_higher}
 Let $|F|\in L^2_{\loc}(\Om_T)$and assume that $u\in L^2_{\loc}(0,T;W^{1,2}_{\loc}(\Om;\mathbb{R}^N))$ is a weak solution to \eqref{eq}.
Then there exist an exponent $q=q(n,L)$, $1<q<2$, and a constant $c=c(n,N,\nu,L)$ such that 
    \begin{equation}\label{rhi_1}
    \biggl(\fiint_{Q }|\na u|^2\,dz\biggr)^\frac{1}{2}
    \le c\biggl( \fiint_{2Q } |\na u| ^q\,dz\biggr)^\frac{1}{q} 
    + c\biggl(\fiint_{2Q } |F|^2 \,dz\biggr)^\frac{1}{2}
    \end{equation}
    for every parabolic cylinder $2Q \Subset \Om_T$.
    Moreover, there exist $c=c(n,N,\nu,L)$ and $\varepsilon_0=\varepsilon_0(n,N,\nu,L)$, $0<\varepsilon_0<1$, such that for every $0<\varepsilon<\varepsilon_0$, we have
     \begin{equation}\label{rhi_2}
    \biggl(\fiint_{Q } |\na u|^{2+\varepsilon}\,dz\biggr)^\frac{1}{2+\epsilon}
    \le c \biggl( \fiint_{2Q } |\na u|^{2}\,dz\biggr)^\frac{1}{2}
    + \biggl(\fiint_{2Q } |F|^{2+\varepsilon}\,dz\biggr)^\frac{1}{2+\varepsilon}
    \end{equation}
    for every parabolic cylinder $2Q \Subset \Om_T$, under the assumption that $|F|\in L^{2+\varepsilon}(2  Q )$.
\end{theorem}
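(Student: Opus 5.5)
The plan is to mirror the proof of Theorem \ref{main_right_coro}: combine the Caccioppoli inequality of Lemma \ref{caccio_lem} with the \emph{self-improved} parabolic Poincar\'e inequality. For \eqref{rhi_1}, fix $0<\rho<\rho_1<\rho_2<2\rho$ and write $Q_i=\tfrac{\rho_i}{\rho}Q$. Lemma \ref{caccio_lem} gives
\[
\fiint_{Q_1}|\na u|^2\,dz\le c\Bigl(\frac{\rho_2}{\rho_2-\rho_1}\Bigr)^2\fiint_{Q_2}\frac{|u-u_{Q_2}|^2}{\rho_2^2}\,dz+c\fiint_{Q_2}|F|^2\,dz .
\]

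Next I choose $q<2$ so that the Sobolev exponent in Theorem \ref{main_left}(i) exceeds $2$. This requires $2<\tfrac{q(n+2)}{n+2-q}$, that is, $q>\tfrac{2(n+2)}{n+4}$. For instance, take $q=\tfrac{2(n+2)}{n+3}\in(1,2)$ and $q^*=2$. Applying Theorem \ref{main_left}(i) on $Q_2$ then bounds the first term by $c(\rho_2/(\rho_2-\rho_1))^2\bigl(\fiint_{Q_2}(|\na u|+|F|)^q\,dz\bigr)^{2/q}$. Using H\"older's inequality, $(\fiint|F|^q)^{1/q}\le(\fiint|F|^2)^{1/2}$. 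Enlarging $Q_2$ to $2Q$ and using $|2Q|/|Q_2|\le 2^{n+2}$, I can take $\rho_1=\rho$ and $\rho_2=\tfrac32\rho$, so no absorption is needed. This yields \eqref{rhi_1} with $c=c(n,N,\nu,L)$.

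For \eqref{rhi_2}, I apply Gehring's lemma in its parabolic form, valid because Lebesgue measure is doubling for parabolic cylinders by \eqref{vol_dec}. First I extend \eqref{rhi_1} to every cylinder $Q$ with $2Q\Subset\Om_T$ contained in a fixed cylinder $Q_0$. Then I rewrite it with $g=|\na u|^q$ and $f=|F|^q$ and exponent $s=2/q>1$:
\[
\fiint_Q g^s\,dz\le c\Bigl(\fiint_{2Q}g\,dz\Bigr)^s+c\fiint_{2Q}f^s\,dz .
\]
The Giaquinta--Modica version of Gehring's lemma, stated for the doubling metric space of parabolic cylinders, gives $\varepsilon_0>0$ and higher integrability $g\in L^{s+\delta}$. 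It also gives the local estimate $(\fiint_Q g^{s+\delta})^{1/(s+\delta)}\le c(\fiint_{2Q}g^s)^{1/s}+c(\fiint_{2Q}f^{s+\delta})^{1/(s+\delta)}$. Setting $2+\varepsilon=q(s+\delta)$ recovers \eqref{rhi_2}.

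The main obstacle is this last step, because the cylinders in Gehring's lemma must be allowed to vary. Concretely, the reverse H\"older inequality has to hold uniformly on all subcylinders near the boundary of $Q_0$, with doubling factor $2$. I would handle this with a standard covering argument: restrict to subcylinders $Q'\subset Q_0$ with $2Q'\subset 2Q_0$, or use the metric-space Gehring lemma, which only needs the inequality for balls $B$ with $2B$ inside the domain.
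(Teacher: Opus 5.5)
Your proposal is correct and follows essentially the same route as the paper: it inserts Theorem~\ref{main_left}(i) with $q^*=2$ (any $q$ with $\tfrac{2(n+2)}{n+4}<q<2$) into the Caccioppoli inequality of Lemma~\ref{caccio_lem} and then obtains \eqref{rhi_2} by citing Gehring's self-improving lemma for reverse H\"older inequalities; the only cosmetic difference is that the paper uses $2Q$ directly as the outer cylinder instead of your intermediate $\tfrac32 Q$. (Lemma~\ref{caccio_lem} is stated for $\rho<\rho_1$, so taking $\rho_1=\rho$ strictly requires applying it with a slightly smaller base radius; the paper's own proof is equally loose on this point.)
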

\begin{proof}
Let $Q=Q_\rho$ parabolic cylinder $2Q=Q_{2\rho} \Subset \Om_T$.
 By Theorem~\ref{main_left} there exists $q=q(n)$, $1<q<2$, such that
    \begin{align*}
    \fiint_{ 2Q  } \frac{|u-u_{2 Q }|^2}{(2\rho)^2} \,dz 
    &\le c  \biggl( \fiint_{  2Q } ( |\na u| + |F| ) ^q\,dz\biggr)^\frac{2}{q}\\
    &\le c  \biggl( \fiint_{ 2Q }|\na u|^q\,dz\biggr)^\frac{2}{q} 
    +  c\fiint_{ 2Q }|F|^2\,dz .
    \end{align*}
    for some constant $c=c(n,L)$. 
    Substituting this into the left-hand side of \eqref{cacc_ie}, we have
    \begin{align*}
    \fiint_{ Q }|\na u|^2\,dz  
    &\le  c \fiint_{ 2Q }  \frac{|u-u_{ 2Q } |^2}{(2\rho)^2} \,dz 
    + c  \fiint_{ 2Q }  |F|^2  \,dz \\
    &\le c\biggl( \fiint_{ 2Q } |\na u| ^q\,dz\biggr)^\frac{2}{q} 
    + c \fiint_{ 2Q }  |F|^2  \,dz .
    \end{align*}
    This proves \eqref{rhi_1}. 
    The estimate \eqref{rhi_2} follows from an application of self-improving properties of reverse H\"older inequalities, see \cite{MR652852}. 
\end{proof}

Next, we state a refinement of \cite[Theorem 2.8]{KL_veryweak} for very weak solutions in the case of 2-growth.

\begin{theorem}\label{very_weak}
       There exists $\beta_0=\beta_0(n,N,\nu,L)$, $0<\beta_0<1$, such that if 
        $u\in L_{\loc}^{2-\beta}(0,T;W_{\loc}^{1,2-\beta}(\Om;\mathbb{R}^N))$, 
       $0<\beta<\beta_0$,
       is a very weak solution to \eqref{eq} in $\Om_T$ and $|F|\in L^2_{\loc}(\Om_T)$, then $u\in L_{\loc}^2(0,T;W_{\loc}^{1,2}(\Omega;\mathbb{R}^N))$
       and $u$ is a weak solution to \eqref{eq} in $\Omega_T$.
\end{theorem}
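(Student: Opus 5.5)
The plan is to reduce the statement to the existing result \cite[Theorem 2.8]{KL_veryweak}. That theorem gives the same conclusion, namely higher integrability of very weak solutions up to the natural exponent $2$, but under the additional hypothesis $u\in L^2_{\loc}(\Om_T;\mathbb{R}^N)$. The only new input needed is to show that this hypothesis already follows from $u\in L^{2-\beta}_{\loc}(0,T;W^{1,2-\beta}_{\loc}(\Om;\mathbb{R}^N))$ and $|F|\in L^2_{\loc}(\Om_T)$. For this I would use the self-improved parabolic Poincar\'e inequality of Theorem \ref{main_left}, applied with $q=2-\beta$.

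First observe that Lemma \ref{poincare_lem} and Theorem \ref{main_left} need only $u\in L^q_{\loc}(0,T;W^{1,q}_{\loc})$ with $q\ge1$, together with the identity \eqref{weak_sol} for smooth test functions. Both are available for a very weak solution, since $|F|\in L^2_{\loc}\subset L^{2-\beta}_{\loc}$. Take $\beta_0\le 1$ so that $q=2-\beta\ge1$. Since $q<n+2$, Theorem \ref{main_left}(i) applies with any exponent $q^*<q(n+2)/(n+2-q)$. With $q=2-\beta$ this threshold equals $(2-\beta)(n+2)/(n+\beta)$, which is strictly larger than $2$ whenever
\[
(2-\beta)(n+2)>2(n+\beta),\quad\text{that is,}\quad \beta<\tfrac{4}{n+4}.
\]
Shrinking $\beta_0$ if necessary, we may therefore choose $q^*=2$. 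This gives, for every $Q_\rho\Subset\Om_T$,
\[
\Bigl(\fiint_Q|u-u_Q|^2\,dz\Bigr)^{\frac12}\le c\rho\Bigl(\fiint_Q(|\na u|+|F|)^{2-\beta}\,dz\Bigr)^{\frac1{2-\beta}}<\infty.
\]
Because $u_Q$ is finite for $u\in L^{2-\beta}_{\loc}$, it follows that $u\in L^2(Q)$. Covering compact subsets of $\Om_T$ by finitely many such cylinders yields $u\in L^2_{\loc}(\Om_T;\mathbb{R}^N)$.

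With this established, $u$ satisfies every hypothesis of \cite[Theorem 2.8]{KL_veryweak}. Taking $\beta_0$ to be the minimum of the threshold from that theorem, $4/(n+4)$, and $1$, that result gives $\na u\in L^2_{\loc}$. Hence $u\in L^2_{\loc}(0,T;W^{1,2}_{\loc}(\Om;\mathbb{R}^N))$, and by Remark \ref{rem_sol} $u$ is a weak solution. The threshold $\beta_0$ then depends only on $n,N,\nu,L$.

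The main obstacle is checking that the argument of \cite{KL_veryweak} genuinely uses the $L^2$ assumption only qualitatively, as finiteness of $u$ in $L^2_{\loc}$ for the Lipschitz truncation and Caccioppoli-type estimates, and not through a quantitative bound whose dependence would spoil the constants. If a quantitative bound on $\fiint|u-u_Q|^2$ is needed there, I would supply it directly from the displayed inequality above. That estimate is exactly of the form used in the reverse Hölder arguments, since its right-hand side involves only $|\na u|+|F|$ raised to the power $2-\beta$.
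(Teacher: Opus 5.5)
Your proposal is correct and matches the paper's proof. You apply Theorem~\ref{main_left}(i) with $q=2-\beta$ and $q^*=2$, which is admissible exactly when $q>\tfrac{2n+4}{n+4}$, i.e.\ $\beta<\tfrac{4}{n+4}$, to obtain $u\in L^2_{\loc}(\Om_T;\mathbb{R}^N)$, and then invoke \cite[Theorem 2.8]{KL_veryweak}; your explicit choice of $\beta_0$ as a minimum that includes $\tfrac{4}{n+4}$ states a restriction the paper leaves implicit.
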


\begin{proof}
    By Theorem~\ref{main_left} (i) we have $u \in L^2_{\loc}(\Omega_T; \mathbb{R}^N)$ for every very weak solution 
    $u\in L_{\loc}^q(0,T; W_{\loc}^{1,q}(\Omega_T; \mathbb{R}^N))$ with $\tfrac{2n+4}{n+4}< q<2$. 
    By \cite[Theorem 2.8]{KL_veryweak},  there exists $\beta_0=\beta_0(n,N,\nu,L)$, $0<\beta_0<1$, such that for every very weak solution 
    \[
    u\in L^2_{\loc}(\Omega_T;\mathbb R^N)\cap L_{\loc}^{2-\beta}(0,T; W_{\loc}^{1,2-\beta}(\Omega_T; \mathbb{R}^N)),
    \quad 0<\beta<\beta_0,
    \]
    we may conclude that $u\in L_{\loc}^2(0,T; W_{\loc}^{1,2}(\Omega_T; \mathbb{R}^N))$. This implies that $u$ is a weak solution to \eqref{eq} in $\Omega_T$.
\end{proof}

\section{Time direction estimates for systems with 2-growth}

Next, we discuss the Hajłasz gradient in the time direction with respect to the parabolic metric \eqref{parabolic_metric}.

\begin{lemma}\label{point_wise}
     Let $|F|\in L^1_{\loc}(\Om_T)$ and assume that $u\in L^1_{\loc}(0,T;W^{1,1}_{\loc}(\Om;\mathbb{R}^N))$ satisfies \eqref{eq} for every $\varphi\in C_0^\infty(\Om_T;\mathbb R^N)$. Then there exists a constant $c=c(n,L)$ such that 
    \[
     \frac{|u(x,t) -u(x,s) |}{|t-s|^\frac{1}{2} } 
     \le c \bigl( M^*g(x,t)+ M^*g(x,s) \bigr),
    \]
    where $g=(|\na u|+|F|)1_{2Q}$, for every parabolic cylinder $Q=Q_\rho=B\times I$ such that $2Q\Subset \Om_T$ and almost every $x\in B$ and $t,s\in I$.
\end{lemma}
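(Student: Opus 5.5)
We use a telescoping argument over a chain of spatial balls shrinking to $x$, combined with the time-slice estimate \eqref{poincare_cal_3} from the proof of Lemma \ref{poincare_lem}, applied on anisotropic cylinders with time length $|t-s|$. Fix $x\in B$ and $t,s\in I$, and set $r=|t-s|^{1/2}$. Since $t,s\in I=I_\rho(t_0)$, we have $r\le \sqrt2\rho$. For $k\ge0$ let $B_k=B_{2^{-k}r}(x)$, intersected suitably with $2B$ (or simply noting $B_{r}(x)\subset 2B$ up to harmless constants, since $r\le\sqrt2\rho$ and $|x-x_0|<\rho$). Let $J$ be the interval with endpoints $t$ and $s$ (slightly enlarged), which lies in $2I$. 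Let $\varphi_k$ be the cutoff from \eqref{gluing_eta} adapted to $B_k$. Then
\[
|u(x,t)-u(x,s)|\le |u(x,t)-(u\varphi_0)_{B_0}(t)|+|(u\varphi_0)_{B_0}(t)-(u\varphi_0)_{B_0}(s)|+|(u\varphi_0)_{B_0}(s)-u(x,s)|.
\]

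\textbf{Middle term.} We repeat the argument leading to \eqref{poincare_cal_3}, testing with $\varphi_0\phi_\delta$ on the cylinder $B_0\times J$. Since only $\|\nabla\varphi_0\|_\infty\le c/r$ is used, this gives
\[
|(u\varphi_0)_{B_0}(t)-(u\varphi_0)_{B_0}(s)|\le \frac{c}{r|B_0|}\iint_{B_0\times J}(|\na u|+|F|)\,dz = c\,\frac{|t-s|}{r}\fiint_{B_0\times J} g\,dz .
\]
Because $B_0\times J\ni(x,t)$ is an admissible cylinder in \eqref{strong_mf}, this is at most $c\,r\,M^*g(x,t)$. This step is valid for almost every $t,s$ (Lebesgue points of the slice averages), which is where the ``almost every'' enters.

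\textbf{End terms.} For a Lebesgue point $(x,t)$ we write $u(x,t)=\lim_k u_{B_k}(t)$. Then
\[
|u(x,t)-(u\varphi_0)_{B_0}(t)|\le |u_{B_0}(t)-(u\varphi_0)_{B_0}(t)|+\sum_{k\ge0}|u_{B_{k+1}}(t)-u_{B_k}(t)|.
\]
Each summand is bounded by the spatial Poincaré inequality at the fixed time, $c\,2^{-k}r\fint_{B_k}|\na u(\cdot,t)|\,dy$, and the first term is bounded similarly, as in \eqref{weighted_poincare}. This yields a slice maximal function $c\,r\,\sup_k\fint_{B_k}|\na u(y,t)|\,dy$.

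\textbf{Main obstacle.} The difficulty is that this slice quantity is not directly controlled by $M^*g(x,t)$: at a fixed time slice we only have a spatial maximal function. The fix I would try is to avoid exact time slices. I would replace $u(x,t)$ by averages $u_{B_k\times J_k(t)}$, where $J_k(t)$ is a time interval around $t$ of length $(2^{-k}r)^2$. The differences between consecutive such averages are controlled by the parabolic Poincaré inequality (Lemma \ref{poincare_lem} with $q=1$) on $B_k\times J_k$, giving $c\,2^{-k}r\fiint g\le c\,2^{-k}r\,M^*g(x,t)$. These parabolic averages converge to $u(x,t)$ at Lebesgue points, since parabolic cylinders shrinking to the point form a regular differentiation family. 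Finally, $u_{B_0\times J_0(t)}$ and $u_{B_0\times J_0(s)}$ are compared through $(u\varphi_0)_{B_0}$ using \eqref{weighted_poincare} and \eqref{poincare_cal_3} on $B_0\times J$ as in the middle term. Summing the geometric series and dividing by $r=|t-s|^{1/2}$ gives the claim.
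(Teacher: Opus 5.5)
Your final argument (the ``fix'') is correct, but it takes a different route from the paper, and the obstacle that motivated it is not actually an obstacle for $M^*$.

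\textbf{How the paper argues.} The paper uses essentially your first decomposition, with $r=(\tfrac12|t-s|)^{1/2}$ and slice averages $u_{B_r(x)}(t)$. The end terms are bounded by a fixed-time pointwise estimate, $c\,r\sup_R\fint_{B_R(x)}|\na u(y,t)|1_{B_r(x)}(y)\,dy$; see \eqref{average_cal_2}. This slice maximal function \emph{is} dominated by $M^*(|\na u|1_{2Q})(x,t)$ for almost every $(x,t)$. Since $M^*$ allows arbitrarily thin time intervals, $\fint_{B_R(x)}|f(y,t)|\,dy=\lim_{\epsilon\to0}\fiint_{B_R(x)\times(t-\epsilon,t+\epsilon)}|f|\,dz\le M^*f(x,t)$ whenever $t$ is a Lebesgue point of $\tau\mapsto\fint_{B_R(x)}|f(y,\tau)|\,dy$. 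By Fubini this holds for all rational $R$ at a.e. $(x,t)$, and then for all $R$ by continuity in $R$. This is exactly why the strong maximal function appears in the statement, and your first approach would already have closed.

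\textbf{How your fix differs.} You instead telescope over the parabolic cylinders $B_{2^{-k}r}(x)\times J_k(t)$, using Lemma~\ref{poincare_lem} with $q=1$ and parabolic Lebesgue differentiation. You use the gluing estimate \eqref{poincare_cal_3} only once, at the top scale, after averaging $u_{B_0}-(u\varphi_0)_{B_0}$ and $(u\varphi_0)_{B_0}$ over $J_0(t)$ and $J_0(s)$. For this, $J$ must contain $J_0(t)\cup J_0(s)$, so it is not just ``slightly'' enlarged. Its length is still comparable to $r^2$, which keeps the factor $|J|/r\lesssim r$. This is the standard Haj\l asz-type telescoping argument, and it works.

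\textbf{What each route buys.} Every cylinder you average over is comparable to a parabolic cylinder containing $(x,t)$ or $(x,s)$. So you actually get the stronger estimate with the parabolic maximal function $Mg\le M^*g$, and you need no fixed-time pointwise estimate. The paper's route is shorter but genuinely relies on $M^*$.

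\textbf{One detail to correct.} With $r=|t-s|^{1/2}$, $r$ can be close to $\sqrt2\rho$, so $B_r(x)\not\subset 2B$ when $x$ is near $\partial B$. Intersecting the balls with $2B$ would destroy the Poincar\'e inequalities. Take $r=(\tfrac12|t-s|)^{1/2}$ as the paper does, and take $J_k(t)=I_{2^{-k}r}(t)$ so that Lemma~\ref{poincare_lem} applies verbatim. Then $r<\rho$ and all your cylinders, including $B_0\times J$, lie in $2Q$. This is needed both for the test functions to be admissible and for the integrals of $|\na u|+|F|$ to coincide with those of $g$.
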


\begin{proof}
    Let $x\in B$, $t,s\in I$ and $r =(\tfrac12|t-s|)^{\frac12}$.
Then $B_r(x)\times (t,s)\subset 2Q$ and we have
 \begin{equation}\label{average_cal}
 \begin{split}
        |u(x,t) -u(x,s) | 
            &\le |u(x,t) - u_{B_r(x)}(t) |
            +|u(x,s) -u_{B_r(x)}(s) | \\
             &\qquad+ |u_{B_r(x)}(t)-u_{B_r(x)}(s)|,
            \end{split}
    \end{equation}
    where 
    \[
    u_{B_r(x)}(t)=\fint_{B_r(x)} u(y,t)\,dy.
    \]
To estimate the first two terms on the right-hand side of \eqref{average_cal}, we apply Theorem~\cite[Theorem~3.5]{KLV} to conclude that there exists a constant $c=c(n)$ such that
    \begin{equation}\label{average_cal_2}
    \begin{split}
    |u(x,t)-u_{B_r(x)}(t)|
    &\le cr\sup_{R>0}\fint_{B_R(x)}|\na u(y,t)|1_{B_r(x)}(y,t)\,dy\\
    &\le c r M^*(|\na u|1_{2Q})(x,t)
    \end{split}
    \end{equation}
    and
    \[
    |u(x,s)-u_{B_r(x)}(s)|
    \le c r M^*(|\na u|1_{2Q})(x,s)
    \]
    for almost every $(x,t),(x,s)\in B\times I$.
    
    To estimate the last term on the right-hand side of \eqref{average_cal}, we follow the argument to estimate \eqref{average_diff_cal} in the proof of Lemma~\ref{poincare_lem}. By considering a non-negative function $\varphi\in C_0^1(B_r(x))$ such that \eqref{gluing_eta} holds, we get
     \begin{equation*}
        \begin{split}
            &|u_{B_r(x) }(t)- u_{B_r(x)}(s)|
            \le |u_{B_r(x) }(t)- (u\varphi )_{B_r(x)}(t)|\\
            &\qquad+ | u_{B_r(x)}(s)- (u\varphi)_{B_r(x)}(s)|
             + | (u\varphi )_{B_r(x)}(t) - (u\varphi)_{B_r(x)}(s) |.
        \end{split}
    \end{equation*}
    For the first and the second terms on the right-hand side, we apply the Poincar\'e inequality in the spatial direction as in \eqref{weighted_poincare}. Then, there exists a constant $c=c(n)$ such that
    \begin{equation}\label{average_cal_3}
    \begin{split}
    |u_{B_r(x) }(t)- (u\varphi )_{B_r(x)}(t)|
    &=\left| \fint_{B_r(x)}  (u(y,t)-u_{B_r(x)}(t) )\varphi(y)  \,dy \right|\\
            &\le c\| \varphi \|_{L^\infty (B_r(x))  }\fint_{B_r(x) } |u(y,t)- u_{B_r(x) }(t)|\,dy\\
            &\le c r\fint_{B_r(x) } |\na u(y,t)|\,dy
    \end{split}
    \end{equation}
    and similarly
    \[
    | u_{B_r(x)}(s)- (u\varphi)_{B_r(x)}(s)|
    \le c r\fint_{B_r(x) } |\na u(y,s)|\,dy.
    \]
    For the last term on the right-hand side of \eqref{average_cal} we use \eqref{poincare_cal_3} and conclude that there exists a constant $c=c(n,L)$ such that
    \begin{align*}
        \begin{split}
            &|u_{B_r(x)}(t)-u_{B_r(x)}(s)| 
            \le cr \fiint_{B_r(x)\times (t,s)} (|\na u|+|F|)\,d\zeta\\
            &\qquad\qquad +  cr\fint_{B_r(x)}|\na u(y,t)|\,dy +cr\fint_{B_r(x)}|\na u(y,s)|\,dy \\
            &\qquad\le c r M^*( (|\na u|+|F|)1_{2Q})(x,t)+ c r M^*((|\na u|+|F|)1_{2Q})(x,s)
        \end{split}
    \end{align*}
    for almost every $(x,t),(x,s)\in \Omega\times I$.
    Combining these estimates, there exists a constant $c=c(n,L)$ such that 
    \begin{equation*}
    \begin{split}
     \frac{|u(x,t) -u(x,s) |}{|t-s|^\frac{1}{2} } 
     &\le c \bigl( M^*( (|\na u|+|F|) 1_{2Q})(x,t)\\
     &\qquad+ M^*( (|\na u|+|F|)1_{2Q})(x,s) \bigr)
     \end{split}
    \end{equation*}
    for almost every $(x,t),(x,s)\in B\times I$.
    This completes the proof.
\end{proof}

\begin{theorem}\label{time_poincare}
     Let $1<q<\infty$. Assume that $|F|\in L^q_{\loc}(\Om_T)$ and that $u\in L^q_{\loc}(0,T;W^{1,q}_{\loc}(\Om;\mathbb{R}^N))$ satisfies \eqref{eq} for every $\varphi\in C_0^\infty(\Om_T;\mathbb R^N)$. Then there exists a constant $c=c(n,L)$ such that
    \[
    \fint_I |u(x,t)-u_I(x)|^q \,dt\le  c |I|^\frac{q}{2} \fint_I M^*g(x,t)^q \,dt,
    \]
 where $g=(|\na u|+|F|)1_{2Q}$, for every parabolic cylinder $Q=Q_\rho=B\times I$ such that $2Q\Subset \Om_T$ and almost every $x\in B$.
\end{theorem}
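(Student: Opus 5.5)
The proof will integrate the pointwise estimate of Lemma~\ref{point_wise} twice in the time variable, at a fixed spatial point. Fix a parabolic cylinder $Q=Q_\rho=B\times I$ with $2Q\Subset\Om_T$, and write $g=(|\na u|+|F|)1_{2Q}$. Since $u,|\na u|,|F|\in L^q_{\loc}$, Fubini's theorem lets us restrict to those $x\in B$ for which three things hold:
\begin{itemize}
\item $t\mapsto u(x,t)$ is integrable on $I$;
\item $t\mapsto M^*g(x,t)$ is measurable on $I$;
\item the pointwise inequality of Lemma~\ref{point_wise} holds for almost every pair $(t,s)\in I\times I$.
\end{itemize}
This last point needs a little care. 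Lemma~\ref{point_wise} is stated for almost every $x$ and $t,s$, and we interpret it as holding for a.e.\ $(x,t,s)\in B\times I\times I$. Fubini then gives the full-measure set of $x$.

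For such an $x$, we first apply Jensen's inequality to $u_I(x)=\fint_I u(x,s)\,ds$:
\[
\fint_I|u(x,t)-u_I(x)|^q\,dt\le \fint_I\fint_I |u(x,t)-u(x,s)|^q\,ds\,dt .
\]
Next we insert the bound of Lemma~\ref{point_wise}, together with $|t-s|\le |I|$ for $t,s\in I$. This gives
\[
|u(x,t)-u(x,s)|^q\le c\,|I|^{\frac q2}\bigl(M^*g(x,t)+M^*g(x,s)\bigr)^q\le c\,|I|^{\frac q2}\bigl(M^*g(x,t)^q+M^*g(x,s)^q\bigr),
\]
where the second step uses $(a+b)^q\le 2^{q-1}(a^q+b^q)$. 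Averaging over $(t,s)\in I\times I$, each of the two terms produces $\fint_I M^*g(x,\cdot)^q$. This yields
\[
\fint_I|u(x,t)-u_I(x)|^q\,dt\le c\,|I|^{\frac q2}\fint_I M^*g(x,t)^q\,dt .
\]
The resulting constant depends on $q$ only through $2^{q-1}$. To match the stated dependence $c=c(n,L)$, we either absorb this factor or simply record it.

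There is no real analytic obstacle here. The only delicate point is the measure-theoretic bookkeeping described at the start: passing from ``almost every $x$, $t$, $s$'' in Lemma~\ref{point_wise} to a statement for almost every fixed $x$ with the inequality valid for a.e.\ $(t,s)$. The right-hand side may be infinite for some $x$, in which case the inequality is trivial. However, since $M^*$ is bounded on $L^q$ for $q>1$ and $g\in L^q(\mathbb R^{n+1})$, Fubini shows that $\fint_I M^*g(x,t)^q\,dt<\infty$ for almost every $x$. This is where the assumption $q>1$ enters.
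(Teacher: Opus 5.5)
Your proposal is correct and is exactly the paper's argument: Jensen's inequality passes to the double time average, the pointwise bound of Lemma~\ref{point_wise} is inserted with $|t-s|\le |I|$, and one averages in $(t,s)$; your Fubini bookkeeping only makes explicit what the paper leaves implicit. One small correction concerns the constant, which is $(2c_0)^q$ with $c_0=c_0(n,L)$ the constant of Lemma~\ref{point_wise}, so it depends on $q$ through $c_0^q$ as well as through $2^{q}$ (the paper's final equality sign and its claim $c=c(n,L)$ gloss over the same point); also, since the inequality is trivial when the right-hand side is infinite, $q>1$ plays no real role in this step.
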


 \begin{proof}
    We apply Lemma~\ref{point_wise} to estimate the left hand side of the desired inequality and obtain
    \begin{align*}
            \fint_I |u(x,t)-u_I(x)|^q \,dt &\le \fint_I\fint_I |u(x,t)-u(x,s)|^q\,ds\,dt\\
            &\le c |I|^\frac{q}{2} \fint_I \fint_I ( M^*( (|\na u|+|F|)1_{2Q})(x,t)\\
            &\qquad+ M^*( (|\na u|+|F|)1_{2Q})(x,s)  )^q \,ds \,dt \\
            &= c |I|^\frac{q}{2} \fint_I  M^*( (|\na u|+|F|)1_{2Q})(x,t)^q\,dt.
    \end{align*}
    Therefore, the proof is completed.
\end{proof}

By \cite[Theorem 9.1.15]{HKST2015}, we obtain self-improving properties of the above Poincar\'e inequality.

\begin{theorem}\label{time_der}
Let $1<q<\infty$. Assume that $|F|\in L^q_{\loc}(\Om_T)$ and that $u\in L^q_{\loc}(0,T;W^{1,q}_{\loc}(\Om;\mathbb{R}^N))$ satisfies \eqref{eq} for every $\varphi\in C_0^\infty(\Om_T;\mathbb R^N)$ and let 
\[
g=M^*(|\na u|+|F|)1_{2Q}.
\]
      \begin{itemize}
    \item[(i)] If $1< q< 2$, then for $1< q \le q^* <\tfrac{2q}{2-q}$, then there exists a constant $c=c(n,q,q^*,L)$ such that 
    \[
    \biggl( \fint_{I } \frac{|u(x,t)-u_{I}(x)|^{q^*}}{|I|^\frac{q^*}{2}}\,dt \biggr)^\frac{1}{q^*}
    \le c\biggl( \fint_{I}g(x,t)^q\,dt\biggr)^\frac{1}{q}
    \]
    for every parabolic cylinder $Q=Q_\rho=B\times I$ such that $2Q\Subset \Om_T$ and for almost every $x\in B$.
    \item[(ii)] If $q=2$, then there exists a constant $c=c(n,L)$ such that
    \[
    \fint_I\exp\left(\left(
   \frac{|u(x,t)-u_{I}(x)|}{\displaystyle c|I|^\frac{1}{2}\left( \fint_{I}  g(x,t)^2 \,dt\right)^{\frac{1}{2}}}\right)^{2}\right)\,dt\le c
    \]
    for every parabolic cylinder $Q=Q_\rho=B\times I$ such that $2Q\Subset \Om_T$ and for almost every $x\in B$.
    \item[(iii)] If $q>2$, then there exists a constant $c=c(n,q,L)$ such that
    \[
    \esssup_{t\in I}\frac{|u(x,t)-u_{I}(x)|}{|I|^\frac{1}{2}}
    \le c\left( \fint_{I}g(x,t)^q\,dt\right)^\frac{1}{q}
    \]
    for every parabolic cylinder $Q=Q_\rho=B\times I$ such that $2Q\Subset \Om_T$ and for almost every $x\in B$.
\end{itemize}
\end{theorem}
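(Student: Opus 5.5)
Fix $Q=B\times I$ with $2Q\Subset\Om_T$ and freeze a spatial point $x\in B$ at which Lemma~\ref{point_wise} holds for almost every $t,s\in I$. By Fubini, this is the case for almost every $x\in B$, because the exceptional set in Lemma~\ref{point_wise} has measure zero in $B\times I\times I$. For such $x$, set $v(t)=u(x,t)$ and $h(t)=g(x,t)$ on the interval $I$. Lemma~\ref{point_wise} then becomes a one-dimensional Haj\l asz-type inequality
\[
|v(t)-v(s)|\le c\,|t-s|^{\frac12}\bigl(h(t)+h(s)\bigr)\quad\text{for a.e. } t,s\in I .
\]
This is exactly a Haj\l asz gradient condition on the metric measure space $(I,d,\mathcal L^1)$ with the snowflaked metric $d(t,s)=|t-s|^{1/2}$. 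In this metric, balls are intervals $J$ of length comparable to $r^2$. So the measure satisfies $|J_r|\simeq r^{2}$: it is Ahlfors regular of dimension $2$. This explains the thresholds $q<2$, $q=2$, $q>2$ and the Sobolev exponent $2q/(2-q)$.

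The plan is therefore to apply the general theory of \cite[Theorem 9.1.15]{HKST2015} on the space $I$ with dimension $s=2$, just as Theorem~\ref{main_left} was derived from \eqref{poincare_iep} with dimension $n+2$. First, integrate the pointwise inequality over every subinterval $J\subset I$, as in the proof of Theorem~\ref{time_poincare}. This gives the $1$-Poincar\'e inequality
\[
\fint_J|v-v_J|\,dt\le c|J|^{\frac12}\fint_J h\,dt
\]
for all subintervals $J\subset I$. Because the Haj\l asz condition is pointwise, it restricts to each subinterval, and $h$ is the same function throughout. It is this family of inequalities on all subintervals, not merely on $I$ itself, that the self-improvement machinery requires, and the Haj\l asz structure gives it for free. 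Then invoke \cite[Theorem 9.1.15]{HKST2015} with doubling dimension $2$. In case (i) this yields the Sobolev--Poincar\'e inequality for every $q^*<2q/(2-q)$. In case (ii) it yields the Trudinger exponential estimate with exponent $s/(s-1)=2$. In case (iii) it yields the Morrey estimate $\esssup_I|v-v_I|\le c|I|^{1/2}(\fint_I h^q)^{1/q}$. The normalization $|I|^{q^*/2}$ comes from the radius being $|I|^{1/2}$ in the metric $d$.

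Alternatively, one can argue directly with the Haj\l asz inequality, which avoids chaining. For (iii), take $t$ with $|v(t)-v_I|\le\fint_I|v(t)-v(s)|\,ds$ and split the right-hand side. The main point is to control $h(t)$ pointwise, and this is the obstacle: $h(t)$ itself is not controlled by $(\fint_I h^q)^{1/q}$. To handle it, I would use the standard telescoping argument over the dyadic subintervals $J_k\ni t$. Each step is controlled by $|J_k|^{1/2}(\fint_{J_k}h^q)^{1/q}$, the sum $\sum_k|J_k|^{1/2-1/q}$ converges because $q>2$, and Lebesgue points give the limit. For (i) and (ii), the same telescoping bounds $|v(t)-v_I|$ by a fractional maximal function $\sup_k|J_k|^{1/2}\fint_{J_k}h$. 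Cases (i) and (ii) then follow from the Hedberg argument and the boundedness of fractional maximal operators of order $1/2$ in dimension $1$, that is, effective order $1$ in dimension $2$. The possibility that $h\notin L^q(I)$ needs no separate treatment, since in that case the right-hand sides are infinite. For almost every $x\in B$ we have $h\in L^q(I)$ by Fubini and the $L^q$-boundedness of $M^*$.

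The constants depend only on $n$ and $L$, through Lemma~\ref{point_wise}, and on $q$ and $q^*$. This matches the statement.
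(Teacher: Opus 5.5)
Your main route is the paper's own. The paper proves this theorem only by citing \cite[Theorem 9.1.15]{HKST2015} for the time-direction Poincar\'e inequality of Theorem~\ref{time_poincare}, read on $I$ with the metric $|t-s|^{1/2}$, in which Lebesgue measure has dimension $2$. Your version is correct and more careful than the paper's. You note that Lemma~\ref{point_wise} gives the Poincar\'e inequality on \emph{every} subinterval $J\subset I$ with the same $h$, which the self-improvement theorem needs and Theorem~\ref{time_poincare} does not literally state. You also justify the choice of almost every $x$ by Fubini.

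Your alternative direct argument for (i) and (ii) has a false step, however. Telescoping over dyadic intervals $J_k\ni t$ bounds $|v(t)-v_I|$ by the \emph{sum} $c\sum_k|J_k|^{1/2}\fint_{J_k}h\,ds$. This sum is dominated by the Riesz potential $c\int_I h(s)|t-s|^{-1/2}\,ds$, not by $\sup_k|J_k|^{1/2}\fint_{J_k}h\,ds$.

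The supremum bound cannot hold for general pairs satisfying your one-dimensional Haj\l asz inequality, and the telescoping uses nothing else. For $q=2$ one has $|J|^{1/2}\fint_J h\,ds\le|I|^{1/2}\bigl(\fint_I h^2\,ds\bigr)^{1/2}$ for every $J\subset I$. The supremum bound would therefore give the Morrey estimate (iii) at the critical exponent, which is exactly what fails there.

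The repair is Hedberg's splitting of the sum at a scale $\delta\le|I|$. The terms with $|J_k|\le\delta$ contribute at most $c\,\delta^{1/2}Mh(t)$, where $M$ is the one-dimensional maximal function of $h1_I$. The remaining terms are estimated by H\"older's inequality. For $q=2$, first regroup them over the disjoint annuli $J_k\setminus J_{k+1}$; this produces a factor $\log(|I|/\delta)^{1/2}$ instead of $\log(|I|/\delta)$.

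Optimizing in $\delta$ gives $|v(t)-v_I|\le c\|h\|_{L^q(I)}^{q/2}Mh(t)^{1-q/2}$ for $1<q<2$. By the $L^q$-boundedness of $M$, this yields (i) even for the endpoint $q^*=2q/(2-q)$. For $q=2$ it gives $|v(t)-v_I|\le c\|h\|_{L^2(I)}\bigl(1+\log^+\bigl(|I|^{1/2}Mh(t)/\|h\|_{L^2(I)}\bigr)\bigr)^{1/2}$, which integrates to (ii) with the power $2$. Your argument for (iii) is fine as written.
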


\section{Systems with $p$-growth}\label{sec_p-growth}

In this section, we discuss corresponding results for the nonlinear system \eqref{p_eq} with $1<p<\infty$ and consider parabolic cylinders with a scaling factor in the time direction. 

For $z=(x,t)\in\mathbb{R}^{n+1}$ with $x\in\mathbb{R}^n$ and $t\in\mathbb{R}$, the parabolic intrinsic cylinder centered at $z$ of radius $\rho>0$ with a scaling factor $\la>0$ is defined as
\[
\Qla=\Qla(z)=B_\rho(x)\times I^\la_\rho(t),
\]
where
\[
I^\la_\rho(t)=(t-\la^{2-p}\rho^2,t+\la^{2-p}\rho^2).
\]
The related parabolic metric is 
\[
\dist(z,z')= \min\{ |x-x'|,\la^\frac{p-2}{2}|t-t'|^\frac{1}{2} \},
\]
For any dilation factor $\alpha>0$, we denote 
\[
\alpha \Qla (z)=B_{\alpha\rho}(x)\times I^\la_{\alpha\rho}(t)
\]
and
\[
\alpha I^\la_{\rho}(t)=(t-\la^{2-p}(\alpha\rho)^2,t+\la^{2-p}(\alpha\rho)^2).
\]
For fixed $\rho>0$ and $\la>1$, it is customary to use the intrinsic cylinder $Q_\rho^\la$, if $2<p<\infty$, and $Q_{\rho_\la}^\la$ with $\rho_\la=\la^{\frac{p-2}{2}}\rho$, if $1<p<2$. This convention ensures that the intrinsic cylinder is contained within the non-intrinsic one. 

We begin with a parabolic Poincar\'e inequality for solutions to \eqref{p_eq}.

\begin{lemma}\label{p_poincare_lem}
 Let $1< p<\infty$, $\max\{1,p-1\}\le q<\infty$ and $\la>0$. Assume that $|F|\in L^{p-1}_{\loc}(\Omega_T)$ and that $u\in  L^q_{\loc}(0,T;W^{1,q}_{\loc} (\Om;\mathbb{R}^N))$ satisfies \eqref{p_weak_sol}
 for every $\varphi\in C_0^\infty(\Om_T;\mathbb{R}^N)$.
Then there exists a constant $c=c(n,q,L)$ such that
	\begin{equation}\label{p_poincare_ie}
    \fiint_{Q}\frac{|u-u_{Q }|^{q}}{\rho^{q}}\,dz \le c\fiint_{Q}|\na u|^{q}\,dz+ c\biggl( \lambda^{2-p}\fiint_{Q} ( |\na u|+ |F| )^{p-1}\,dz \biggr)^q
    \end{equation}
    for every parabolic cylinder $Q=\Qla \Subset\Omega_T$.
\end{lemma}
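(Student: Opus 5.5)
The plan is to repeat the proof of Lemma~\ref{poincare_lem} verbatim, with only the time-derivative estimate changed to account for the intrinsic length of $I=I^\la_\rho$ and the $(p-1)$-growth of $\mathcal{A}$. Write $Q=B\times I$ with $|I|=2\la^{2-p}\rho^2$.

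First, I split $u-u_Q=(u-u_B(t))+(u_B(t)-u_Q)$. The first part is controlled by the spatial Poincar\'e inequality on each time slice, which gives $c\fiint_Q|\na u|^q\,dz$. For the second part, Jensen's inequality gives
\[
\fint_I|u_B(t)-u_Q|^q\,dt\le\fint_I\fint_I|u_B(t)-u_B(s)|^q\,ds\,dt .
\]
I then insert the weighted averages $(u\varphi)_B$, with $\varphi$ the spatial cutoff from \eqref{gluing_eta}. Exactly as in \eqref{weighted_poincare}, the differences $u_B-(u\varphi)_B$ cost at most $c\rho^q\fiint_Q|\na u|^q\,dz$. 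This reduces everything to the analogue of \eqref{poincare_cal_2}: it remains to estimate $\rho^{-q}\esssup_{t_1,t_2\in I}|(u\varphi)_B(t_2)-(u\varphi)_B(t_1)|^q$.

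Second, I test \eqref{p_weak_sol} with $e_i\phi_\delta\varphi$, using the same piecewise linear time cutoff $\phi_\delta$ as before. This test function is admissible because the integrals are well defined under $q\ge\max\{1,p-1\}$ and $|F|\in L^{p-1}_{\loc}$ (Remark~\ref{rem_sol}), and a density argument passes from $C_0^\infty$ to Lipschitz test functions. The growth bound $|\mathcal{A}(z,\xi)|\le L|\xi|^{p-1}$ together with $|\na\varphi|\le c/\rho$ gives
\[
\Bigl|\tfrac1\delta\int_{t_2}^{t_2+\delta}(u\varphi)_B\,dt-\tfrac1\delta\int_{t_1-\delta}^{t_1}(u\varphi)_B\,dt\Bigr|\le\frac{c}{\rho|B|}\iint_Q(L|\na u|^{p-1}+|F|^{p-1})\,dz .
\]
Since $|Q|=|B||I|$, the right-hand side equals $c\rho^{-1}|I|\fiint_Q(|\na u|+|F|)^{p-1}\,dz=c\la^{2-p}\rho\fiint_Q(|\na u|+|F|)^{p-1}\,dz$. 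Letting $\delta\to0$ at Lebesgue points gives the intrinsic analogue of \eqref{poincare_cal_3}:
\[
\esssup_{t_1,t_2\in I}|(u\varphi)_B(t_2)-(u\varphi)_B(t_1)|\le c\rho\,\la^{2-p}\fiint_Q(|\na u|+|F|)^{p-1}\,dz .
\]

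Third, I raise this to the power $q$ and divide by $\rho^q$, which produces exactly the second term of \eqref{p_poincare_ie}. Unlike the case $p=2$, I do not absorb the $|\na u|$ part of this term into $\fiint_Q|\na u|^q$, since the powers and the factor $\la$ do not match; the statement keeps it as it stands.

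The main obstacle is only bookkeeping. One must check that the factor $|I|/\rho$ indeed yields $\la^{2-p}\rho$, which is where the intrinsic scaling enters. One must also check that the constant depends only on $n,q,L$: the constant of \eqref{gluing_eta} depends on $n$, and the spatial Poincar\'e constant depends on $n,q$.
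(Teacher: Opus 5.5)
Your proposal is correct and follows essentially the same route as the paper. It uses the spatial Poincar\'e inequality and the weighted averages $(u\varphi)_B$ to reduce to the time oscillation of $(u\varphi)_B$, then tests with $\varphi\phi_\delta$ and uses $|I^\la_\rho|/\rho=2\la^{2-p}\rho$ to obtain \eqref{p_poincare_cal}, with the same bookkeeping of the scaling factor and of the constant's dependence on $n,q,L$.
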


\begin{proof}
    The proof is analogous to the proof of Lemma~\ref{poincare_lem}. As in \eqref{poincare_cal_1}, we apply the Poincar\'e inequality in the spatial direction and obtain
    \[
    \fiint_{Q}\frac{|u-u_{Q }|^{q}}{\rho^{q}}\,dz \le c\fiint_{Q}|\na u|^{q}\,dz+ \fiint_{Q}\frac{|u_B(t)-u_{Q }|^{q}}{\rho^{q}}\,dz,
    \]
    where $Q=B_\rho\times I^\la_\rho$, $t\in I^\la_\rho$ and $c=c(n,q)$. To estimate the last term on the right-hand side, we use the cutoff function \eqref{gluing_eta} and apply \eqref{weighted_poincare} to obtain
    \begin{align*}
    \fiint_{Q}\frac{|u-u_{Q }|^{q}}{\rho^{q}}\,dz 
    &\le c\fiint_{Q}|\na u|^{q}\,dz\\
    &\qquad+ c\rho^{-q}\esssup_{t,s\in I^\la} | (u\varphi)_B(t)- (u\varphi)_B(s) |^q.
    \end{align*}
    Taking $\varphi \phi_\delta$ as a test function in \eqref{p_weak_sol}, we get
   \begin{equation}\label{p_poincare_cal}
       \begin{split}
           \esssup_{t_1,t_2\in I^\la}| (u\varphi)_{B}(t_2) -(u\varphi)_{B}(t_1) |
       & \le \frac{c}{\rho|B|} \iint_{Q} ( |\na u|+ |F| )^{p-1} \,dz \\
       &= c \rho\lambda^{2-p}\fiint_{Q} ( |\na u|+ |F| )^{p-1}\,dz.
       \end{split}
   \end{equation}
   Combining the estimates above, we conclude that
   \[
   \fiint_{Q}\frac{|u-u_{Q }|^{q}}{\rho^{q}}\,dz \le c\fiint_{Q}|\na u|^{q}\,dz+ c\biggl( \lambda^{2-p}\fiint_{Q} ( |\na u|+ |F| )^{p-1}\,dz \biggr)^q.
   \]
\end{proof}

Observe that the scaling factor $\lambda^{2-p}$, related to the parabolic cylinders, appears on the right-hand side of the parabolic Poincar\'e inequality \eqref{p_poincare_ie} if $p\ne2$. 

\begin{lemma}
Let $1< p<\infty$, $\max\{1,p-1\}\le q<\infty$ and $\la>0$. 
Assume that $|F|\in L^{q}_{\loc}(\Omega_T)$ and that $u\in  L^q_{\loc}(0,T;W^{1,q}_{\loc} (\Om;\mathbb{R}^N))$ satisfies \eqref{p_weak_sol}
 for every $\varphi\in C_0^\infty(\Om_T;\mathbb{R}^N)$. Moreover, assume  that there exists a parabolic cylinder $Q=\Qla \Subset\Omega_T$ and a constant $\gamma\ge 1$ such that
 \begin{equation}\label{grad_stop_arg}
    \gamma^{-1}\lambda \le \biggl( \fiint_Q (|\na u|+|F|)^q\, dz \biggr)^\frac{1}{q} \le \gamma \lambda.
\end{equation}
Then there exists a constant $c=c(n,p,q,L,\gamma)$ such that
	\begin{equation}\label{p_poincare_ie2}
    \fiint_{Q}\frac{|u-u_{Q }|^{q}}{\rho^{q}}\,dz \le c\fiint_{Q}  ( |\na u| + |F| )^{q} \,dz.
    \end{equation}
\end{lemma}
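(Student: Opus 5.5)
We start from Lemma~\ref{p_poincare_lem}, which already gives
\[
\fiint_{Q}\frac{|u-u_{Q }|^{q}}{\rho^{q}}\,dz \le c\fiint_{Q}|\na u|^{q}\,dz+ c\biggl( \lambda^{2-p}\fiint_{Q} ( |\na u|+ |F| )^{p-1}\,dz \biggr)^q .
\]
The first term is already of the desired form, so everything reduces to bounding the second term by $c\fiint_Q(|\na u|+|F|)^q\,dz$. To simplify notation, write $G=|\na u|+|F|$ and $\Lambda=(\fiint_Q G^q\,dz)^{1/q}$. By \eqref{grad_stop_arg} we have $\gamma^{-1}\la\le\Lambda\le\gamma\la$.

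Since $q\ge p-1$, Hölder's inequality gives
\[
\fiint_Q G^{p-1}\,dz\le \Lambda^{p-1}.
\]
When $p\ge 2$ this holds because $q\ge p-1\ge 1$. When $1<p<2$ we have $p-1<1\le q$, and Jensen's inequality applies to the concave power. Substituting, the second term is bounded by
\[
c\bigl(\la^{2-p}\Lambda^{p-1}\bigr)^q=c\,\la^{(2-p)q}\Lambda^{(p-1)q}.
\]

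We now use the two-sided comparison to replace $\la^{2-p}$ by a power of $\Lambda$, treating the sign of the exponent separately.
\begin{itemize}
\item If $p\le 2$, then $2-p\ge0$. The upper half of the assumption, $\la\le\gamma\Lambda$, gives $\la^{2-p}\le\gamma^{2-p}\Lambda^{2-p}$.
\item If $p>2$, the exponent $2-p$ is negative. We use the lower half, $\la\ge\gamma^{-1}\Lambda$, to get $\la^{2-p}\le\gamma^{p-2}\Lambda^{2-p}$.
\end{itemize}
In either case $\la^{2-p}\Lambda^{p-1}\le\gamma^{|p-2|}\Lambda$. Hence the second term is at most $c\,\gamma^{|p-2|q}\Lambda^q=c\fiint_Q G^q\,dz$.

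Adding this to the first term (bounded trivially by $\fiint_Q G^q$) yields \eqref{p_poincare_ie2} with $c=c(n,p,q,L,\gamma)$. There is no serious obstacle here. The only care needed is to use the correct side of \eqref{grad_stop_arg} depending on the sign of $2-p$, and to check that the exponent $p-1\le q$ permits Hölder/Jensen even when $p-1<1$. The integrability hypothesis $|F|\in L^q_{\loc}$ ensures $\Lambda<\infty$, so all terms are finite.
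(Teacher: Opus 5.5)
Your proof is correct and follows the paper's argument. Like the paper, you insert the two-sided comparison $\lambda\simeq(\fiint_Q(|\na u|+|F|)^q\,dz)^{1/q}$ into the second term of Lemma~\ref{p_poincare_lem} and then apply H\"older's inequality with $p-1\le q$; your case split on the sign of $2-p$ simply makes explicit which inequality in \eqref{grad_stop_arg} is used, where the paper does it in one line (note that what you call the ``upper half'' $\lambda\le\gamma\Lambda$ is the left-hand inequality in \eqref{grad_stop_arg}, and vice versa).
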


\begin{proof}
    By substituting \eqref{grad_stop_arg} into \eqref{p_poincare_ie}, we obtain
    \begin{align*}
            &\fiint_{Q}\frac{|u-u_{Q }|^{q}}{\rho^{q}}\,dz 
             \le c\fiint_{Q}|\na u|^{q}\,dz \\
            &\qquad + c\biggl(  \fiint_{Q} ( |\na u|+ |F| )^{q}\,dz \biggr)^{2-p}   \biggl( \fiint_{Q} ( |\na u|+ |F| )^{p-1}\,dz \biggr)^q,
    \end{align*}
    where $c=c(n,p,q,L,\gamma)$.
    Therefore, by H\"older's inequality, we have
    \[
    \fiint_{Q}\frac{|u-u_{Q }|^{q}}{\rho^{q}}\,dz 
            \le c\fiint_{Q} (   |\na u| +  |F| )^{q} \,dz.
    \]
\end{proof}

The assumption \eqref{grad_stop_arg} naturally appears in the application of a stopping time argument in studying the gradient regularity of systems of parabolic PDEs, see \cite{MR2286632,MR1749438,KL_veryweak,MR2968162}.

\section{Self-improving properties of systems with $p$-growth}
The main challenge in self-improving properties of the parabolic Poincar\'e inequality is the appearance of the scaling factor in \eqref{p_poincare_ie}. We overcome this issue by applying the following scaling invariance property of \eqref{p_eq}, see \cite{MR2286632}. 

\begin{lemma}\label{p_scal_inv}
     Let $1< p<\infty$, $\max\{1,p-1\}\le q<\infty$ and $\la>0$.
    Assume that  $|F|\in L^{q}_{\loc}(\Omega_T)$ and that $u\in  L^{q}_{\loc}(0,T;W^{1,q}_{\loc} (\Om;\mathbb{R}^N))$ satisfies \eqref{p_weak_sol}
 for every $\varphi\in C_0^\infty(\Om_T;\mathbb{R}^N)$.
 For $\Qla(z_0)=B_\rho(x_0)\times I^\la_\rho(t_0)\Subset\Om_T$ with $\rho>0$ and $\la>0$, let $(y,s)\in Q_1=B_1(0)\times I_1(0)$ and
 \begin{equation}\label{p_scale}
 \begin{split}
     v(y,s)
        &=  (\la\rho)^{-1} u (x_0+ \rho y, t_0+ \la^{2-p} \rho^2 s ),\\
         \mathcal{B}( y, s,\xi)&= \la^{1-p} \mathcal{A}(x_0+ \rho y, t_0 + \la^{2-p}\rho^2s , \xi),\\
        G(y,s)&= \la^{-1} F (x_0+ \rho y, t_0+\la^{2-p}\rho^2s ).
 \end{split}
 \end{equation}
 Then $v\in  L^{q}_{\loc}(I_1(0) ;W^{1,q}_{\loc} (B_1(0);\mathbb{R}^N))$ satisfies 
 \[
 \iint_{Q_1} (-v\cdot \varphi_t+\mathcal{B}(y,s, \la \na v) \cdot \na \varphi) \,dy\,ds
=\iint_{Q_1} |G|^{p-2}G\cdot \na \varphi\,dy\,ds
 \]
 for every $\varphi\in C_0^\infty(Q_1 ;\mathbb{R}^N)$.
\end{lemma}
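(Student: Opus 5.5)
The plan is a direct change of variables in the weak formulation \eqref{p_weak_sol}. Write $x=x_0+\rho y$ and $t=t_0+\la^{2-p}\rho^2 s$; this affine map $\Phi$ sends $Q_1=B_1(0)\times I_1(0)$ bijectively onto $\Qla(z_0)$, with Jacobian $dx\,dt=\rho^{n+2}\la^{2-p}\,dy\,ds$.

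\textbf{Regularity of $v$.} Since $\Qla(z_0)\Subset\Om_T$, any compact subset of $Q_1$ is mapped onto a compact subset of $\Om_T$. Hence $v\in L^q_{\loc}(I_1(0);W^{1,q}_{\loc}(B_1(0);\mathbb R^N))$ follows from the assumption on $u$. The chain rule on a.e. time slice gives
\[
\na v(y,s)=\la^{-1}(\na u)(\Phi(y,s)).
\]
Therefore $\la\na v=\na u\circ\Phi$ and $\mathcal B(y,s,\la\na v)=\la^{1-p}\mathcal A(\Phi(y,s),\na u\circ\Phi)$. Since $\mathcal B$ is just a rescaled and translated $\mathcal A$, it is still Carath\'eodory.

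\textbf{Transporting the test function.} Given $\varphi\in C_0^\infty(Q_1;\mathbb R^N)$, define $\psi(x,t)=\varphi(\Phi^{-1}(x,t))$ and extend it by zero. Then $\psi\in C_0^\infty(\Om_T;\mathbb R^N)$ is admissible in \eqref{p_weak_sol}, and
\[
\na\psi=\rho^{-1}(\na\varphi)\circ\Phi^{-1},\qquad \psi_t=\la^{p-2}\rho^{-2}(\varphi_s)\circ\Phi^{-1}.
\]
Substituting into \eqref{p_weak_sol} and changing variables gives three terms, each carrying the Jacobian factor $\rho^{n+2}\la^{2-p}$:
\begin{align*}
-\iint u\cdot\psi_t\,dz&=-\rho^{n+2}\la^{2-p}\iint_{Q_1}\la\rho v\cdot\la^{p-2}\rho^{-2}\varphi_s=-\rho^{n+1}\la\iint_{Q_1} v\cdot\varphi_s,\\
\iint\mathcal A(z,\na u)\cdot\na\psi\,dz&=\rho^{n+2}\la^{2-p}\iint_{Q_1}\la^{p-1}\mathcal B(y,s,\la\na v)\cdot\rho^{-1}\na\varphi=\rho^{n+1}\la\iint_{Q_1}\mathcal B\cdot\na\varphi,\\
\iint |F|^{p-2}F\cdot\na\psi\,dz&=\rho^{n+1}\la\iint_{Q_1}|G|^{p-2}G\cdot\na\varphi.
\end{align*}
The third line uses $|F|^{p-2}F=\la^{p-1}|G|^{p-2}G$ at corresponding points. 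Every term carries the common factor $\rho^{n+1}\la$. Dividing by it yields the claimed identity.

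\textbf{Main point to check.} There is no real obstacle; the step needing care is the exponent bookkeeping, namely that the powers of $\la$ match across the three terms. The time term gives $\la^{2-p}\cdot\la\cdot\la^{p-2}=\la$, the elliptic term gives $\la^{2-p}\la^{p-1}=\la$, and the forcing term gives the same. One should also confirm that integrability is preserved. The change of variables maps $L^{p-1}$ and $L^{q}$ functions on compact sets to functions of the same class, because $q\ge\max\{1,p-1\}$ and $|F|\in L^q_{\loc}$. Consequently every integral above is finite, as in Remark~\ref{rem_sol}.
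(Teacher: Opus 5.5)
Your proof is correct and follows the same route as the paper: you transport the test function through the affine map $(y,s)\mapsto(x_0+\rho y,\,t_0+\la^{2-p}\rho^2 s)$, change variables in \eqref{p_weak_sol}, and check that all three terms carry a common factor. The only difference is bookkeeping: the paper works with averaged integrals, so the Jacobian drops out and the common factor is $\la^{1-p}\rho$, whereas you use plain integrals with the Jacobian $\rho^{n+2}\la^{2-p}$ and then divide by $\rho^{n+1}\la$.
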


\begin{proof}
    We may assume that $x_0=0$ and $t_0=0$.
    Let $\varphi\in C_0^\infty(Q_1;\mathbb{R}^N)$ and let $\psi\in C_0^\infty(\Qla;\mathbb{R}^N)$ be such that, for $(y,s)\in Q_1$ and $(x,t)\in Q_\rho^\la$, we have
    \[
        \varphi(y,s)=\psi(\rho y, \la^{2-p}\rho^2 s )
        \quad\text{and}\quad 
        \psi(x,t)= \varphi(\rho^{-1}x, \la^{p-2}\rho^{-2}t).
    \]
    Note that 
  \[
    \frac{d \varphi}{ds} (y,s) = \la^{2-p}\rho^2 \frac{d\psi}{dt}(\rho y,  \la^{2-p}\rho^2 s )
 \]
and
\[
 \na_y \varphi(y,s) = \rho \na_x \psi ( \rho y,  \la^{2-p}\rho^2 s ).
\]
    It follows by scaling and \eqref{p_weak_sol} that
    \begin{align*}
        \begin{split}
            &\fiint_{Q_1} -v(y,s)\cdot \frac{d\varphi}{ds}(y,s) \,dy\,ds \\
            &\qquad= \la^{1-p}\rho \fiint_{Q_1} - u(\rho y, \la^{2-p}\rho^2 s)\cdot \frac{d\psi}{dt}( \rho y, \la^{2-p}\rho^2 s) \,dy\,ds\\
            &\qquad= \la^{1-p}\rho \fiint_{\Qla} -u\cdot \frac{d\psi}{dt} \,dx\,dt \\
            &\qquad= \la^{1-p}\rho \fiint_{\Qla} ( -\mathcal{A}(z,\na_x u)\cdot \na_x \psi+ |F|^{p-2}F\cdot \na_x \psi )\,dx\,dt.
        \end{split}
    \end{align*}
    We observe that
    \begin{align*}
            &\mathcal{A}(  \rho y, \la^{2-p}\rho^2 s , \na_x u ( \rho y, \la^{2-p}\rho^2 s) )\cdot \na_x \psi ( \rho y, \la^{2-p}\rho^2 s) \\
            &\qquad=\la^{p-1}\rho^{-1}\mathcal{B}(y,s, \la \na_y v (y,s)  ) \cdot \na_y \varphi(y,s).
    \end{align*}
    Similarly we have
    \begin{align*}
    &|F|^{p-2}F( \rho y, \la^{2-p}\rho^2 s) \cdot \na_x \psi ( \rho y, \la^{2-p}\rho^2 s)\\
    &\qquad= \la^{p-1}\rho^{-1} |G|^{p-2}G(y,s) \cdot \na_y\varphi (y,s).
    \end{align*}
    Therefore, again by scaling, we arrive at
    \begin{align*}
            &\fiint_{Q_1} -v\cdot \frac{d\varphi}{ds} \,dy\,ds\\
            &\qquad=\la^{1-p}\rho \fiint_{\Qla} ( -\mathcal{A}(z,\na_x u)\cdot \na_x \psi+ |F|^{p-2}F\cdot \na_x \psi )\,dx\,dt\\
            &\qquad=\fiint_{Q_1} ( -\mathcal{B}(y,s, \la \na_y v )\cdot \na_y \varphi+ |G|^{p-2}G\cdot \na_y \varphi )\,dy\,ds.
    \end{align*}
    This completes the proof.
\end{proof}

\begin{theorem}\label{p_main_left}
Let $1< p<\infty$, $\max\{1,p-1\}\le q<\infty$ and $\la\ge 1$. 
Assume that  $|F|\in L^{q}_{\loc}(\Omega_T)$ and that $u\in  L^q_{\loc}(0,T;W^{1,q}_{\loc} (\Om;\mathbb{R}^N))$ satisfies \eqref{p_weak_sol} for every $\varphi\in C_0^\infty(\Om_T;\mathbb{R}^N)$ and that there exist a parabolic cylinder $Q=\Qla \Subset\Omega_T$ and a constant $\gamma>1$ such that
\begin{equation}\label{p_main_left_stopping}
   \gamma^{-1}\lambda \le \biggl(\fiint_Q (|\na u|+|F|)^q\, dz\biggr)^\frac{1}{q} \le \lambda.
\end{equation}
\begin{itemize}
    \item[(i)] If $2< p<\infty$ and $p-1 \le q <n+p$, then for $ q\le q^*<\tfrac{q(n+2)}{n+p-q}$, there exists a constant $c=c(n,p,q,q^*,L,\gamma)$ such that 
    \begin{equation}\label{p_poincare_ieq}
    \left( \fiint_{Q } \frac{|u-u_{Q}|^{q^*}}{\rho^{q^*}}\,dz\right)^\frac{1}{q^*}
    \le c\biggl( \fiint_{Q } ( |\na u| + |F| ) ^q\,dz\biggr)^\frac{1}{q}.
    \end{equation}
    If $1<p<2$ and $\max\bigl\{ 1,\tfrac{(2-p)n}{2}\bigr\} < q <n+p$, then the estimate above holds for $q\le q^*<\tfrac{q(n+2)}{n+p-q}$.
    \item[(ii)] If $2< p<\infty$ and $q=n+p$, then there exists a constant $c=c(n,p,L,\gamma)$ such that
    \begin{equation}\label{p_poincare_ieq_2}
        \fiint_Q\exp\left(\left(\frac{|u-u_{Q}|}{\displaystyle c\rho\left(\fiint_{Q}(|\na u|+|F|)^{n+p} \,dz\right)^{\frac{1}{n+p}}}\right)^{\frac{n+2}{n+p}} \right)\,dz\le c.
    \end{equation}
    If $1<p<2$ and $q=n+p$, then there exists a constant $c=c(n,p,L,\gamma)$ such that
    \begin{equation}\label{p_poincare_ieq_22}
        \fiint_Q\exp\left(
   \frac{|u-u_{Q}|}{\displaystyle c\rho\left(\fiint_{Q}(|\na u|+|F|)^{n+p} \,dz\right)^{\frac{1}{n+p}}} \right)\,dz\le c.
    \end{equation}
    \item[(iii)] If $q>n+p$, then there exists a constant $c=c(n,p,q,L,\gamma)$ such that
    \begin{equation}\label{p_poincare_ieq_3}
        \esssup_Q\frac{|u-u_{Q}|}{\rho}
    \le c\biggl( \fiint_{Q } ( |\na u| + |F| ) ^q\,dz\biggr)^\frac{1}{q}.
    \end{equation}
\end{itemize}
\end{theorem}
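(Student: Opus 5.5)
The plan is to remove the scaling factor with Lemma~\ref{p_scal_inv} and then rerun, on the unit cylinder, the chaining argument behind \cite[Theorem 9.1.15]{HKST2015}. That argument needs a Poincaré inequality on \emph{all} subcylinders, and the stopping condition \eqref{p_main_left_stopping} only controls averages over $Q$ itself. The work is therefore to turn \eqref{p_poincare_ie} into a Poincaré inequality on small subcylinders with an explicit power of the radius, and then to check that the chaining still closes.

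\textbf{Step 1: rescaling.} Let $v,\mathcal B,G$ be as in \eqref{p_scale}. By \eqref{p_ellipticity} we have $|\mathcal B(y,s,\la\xi)|\le L|\xi|^{p-1}$, so $v$ satisfies a system of the same type on $Q_1$ with scaling factor $1$. Set $h=|\na v|+|G|$. Then \eqref{p_main_left_stopping} becomes
\[
\gamma^{-1}\le\Bigl(\fiint_{Q_1}h^q\,dz\Bigr)^{1/q}\le1,
\]
and the claims (i)--(iii) are equivalent to the same claims for $v$ on $Q_1$ with $\rho=1$. From now on I work only with $v$.

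\textbf{Step 2: Poincaré on subcylinders.} Take a subcylinder $Q'\subset Q_1$ of radius $r\le1$. For $p>2$ I use the standard cylinder $Q'=Q_r$. For $1<p<2$ I use the other type of intrinsic cylinder, $B_{r^{\theta}}\times I_r$ with a suitable power $\theta$, so that it stays inside $Q_1$. The proof of Lemma~\ref{p_poincare_lem}, applied to $v$ with $\la=1$, gives
\[
\fiint_{Q'}\frac{|v-v_{Q'}|^q}{r^q}\,dz\le c\fiint_{Q'}|\na v|^q\,dz+c\Bigl(\fiint_{Q'}h^{p-1}\,dz\Bigr)^q .
\]
Since $q\ge p-1$, Hölder gives $\bigl(\fiint_{Q'}h^{p-1}\bigr)^q\le\bigl(\fiint_{Q'}h^q\bigr)^{p-1}$. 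I then split off the excess power $p-2$ and bound it by the global estimate
\[
\fiint_{Q'}h^q\,dz\le\frac{|Q_1|}{|Q'|}\fiint_{Q_1}h^q\,dz\le c\,\frac{|Q_1|}{|Q'|},
\]
which follows from the rescaled stopping condition. The outcome should be a modified Poincaré inequality
\[
\Bigl(\fiint_{Q'}|v-v_{Q'}|^q\,dz\Bigr)^{1/q}\le c\,r^{\alpha}\Bigl(\fiint_{Q'}h^q\,dz\Bigr)^{1/q},
\]
with an exponent $\alpha$ depending on $n,p,q$. Equivalently, this is a Poincaré inequality with respect to a measure whose volume decays like $r^{n+p}$ rather than $r^{n+2}$. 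This is consistent with the thresholds $q=n+p$ in (ii)--(iii) and with the exponent $\tfrac{q(n+2)}{n+p-q}$ in (i).

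\textbf{Step 3: chaining.} For a Lebesgue point $z\in Q_1$, connect $z$ to $Q_1$ by a chain of nested subcylinders $Q_k$ of radii $2^{-k}$ and telescope:
\[
|v(z)-v_{Q_1}|\le\sum_k|v_{Q_{k+1}}-v_{Q_k}|\le c\sum_k 2^{-k\alpha}\Bigl(\fiint_{Q_k}h^q\,dz\Bigr)^{1/q}.
\]
Split the sum at a level $k_0$. Bound the small-scale part by a maximal function of $h^q$, and bound the large-scale part by the global bound $\fiint_{Q_k}h^q\le c\,2^{k(n+2)}$. Optimizing $k_0$ yields a weak-type estimate for $v-v_{Q_1}$ at the exponent $q^*$. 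The usual truncation argument of \cite{HKST2015} then upgrades it to the strong bound (i) for every $q^*$ strictly below the critical value. The same scheme gives the exponential bound (ii) when $q=n+p$, and the $L^\infty$ bound (iii) when $q>n+p$, where the geometric series converges without splitting. The sum over $k$ converges only when $\alpha>0$. For $1<p<2$ this positivity, together with the local integrability of $h^{p-1}$ on the intrinsic cylinders, is where the lower bound $q>\tfrac{(2-p)n}{2}$ should enter.

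\textbf{Main obstacle.} I expect Step 2 to be the hard part. One has to choose the shape of the subcylinders so that both of the following hold. First, the nonhomogeneous term $\bigl(\fiint h^{p-1}\bigr)^q$ is absorbed with a power of $r$ that reproduces exactly $\tfrac{q(n+2)}{n+p-q}$ and the exponent $\tfrac{n+2}{n+p}$ in \eqref{p_poincare_ieq_2}. Second, the subcylinders remain nested and doubling, so that the chaining of Step 3 is legitimate. My first attempt is the standard cylinders $Q_r$ for $p>2$ and cylinders shrunk in space, $B_{r^{\theta}}\times I_r$, for $p<2$. I would then tune the Hölder splitting until the exponent bookkeeping matches the statement.
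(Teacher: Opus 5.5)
\textbf{Gap in Step~2.} Step~1 and the chaining framework are fine. The problem is Step~2, and tuning the Hölder splitting cannot fix it, because the loss comes from the shape of the subcylinders. For $p>2$ and standard cylinders $Q_r\subset Q_1$, the only information you have is $\fiint_{Q_r}h^q\,dz\le c\,r^{-(n+2)}$. Your splitting therefore yields exactly
\[
\Bigl(\fiint_{Q_r}|v-v_{Q_r}|^q\,dz\Bigr)^{1/q}\le c\,r^{\alpha}\Bigl(\fiint_{Q_r}h^q\,dz\Bigr)^{1/q},\qquad \alpha=1-\frac{(n+2)(p-2)}{q}.
\]
This leads to three failures.
\begin{itemize}
\item[(a)] The exponent $\alpha$ is negative for admissible data, e.g.\ $n=1$, $p=3$, $q=p-1=2$. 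Then your chain diverges, as you note yourself.
\item[(b)] When $\alpha>0$, chaining in dimension $n+2$ gives only the exponent $\frac{q(n+2)}{(n+2)(p-1)-q}$. This is strictly smaller than $\frac{q(n+2)}{n+p-q}$, since $(n+2)(p-1)-(n+p)=(n+1)(p-2)>0$.
\item[(c)] The critical value for (ii)--(iii) becomes $q=(n+2)(p-1)$ instead of $n+p$. So the claimed ``volume decay $r^{n+p}$'' does not come out of the computation.
\end{itemize}
For $1<p<2$ the mechanism fails outright. The power $p-2$ you want to split off is negative, so you would need a \emph{lower} bound on $\fiint_{Q'}h^q$, while \eqref{p_main_left_stopping} only gives an upper bound. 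In fact no fixed family of subcylinders can turn $\la^{2-p}\fiint h^{p-1}$ into a multiple of $(\fiint h^q)^{1/q}$, because $A^{p-1}/A\to\infty$ as $A\to0$. A separate error: your displayed inequality with $\la=1$ and $r^q$ in the denominator holds only for standard cylinders. For $B_{r^\theta}\times I_r$, Lemma~\ref{p_poincare_lem} carries the factor $\la^{2-p}=r^{2-2\theta}$ and the denominator $r^{\theta q}$.

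\textbf{Missing idea: level-dependent cylinders.} In the paper, take $w$ with $|v(w)-v_{Q_1}|>\La$ and $2^k<\La\le 2^{k+1}$. The chain first runs through $Q_1^{2^i}$, $i\le k$ (for $p<2$, through $Q^{2^i}_{2^{(p-2)i/2}}$, $i\le\frac{2k}{p}$). On these, the global bound makes the total contribution at most $\La/2$ once $2^k\ge\cs$; for $p<2$ this is exactly where $q>\frac{(2-p)n}{2}$ is used. The chain then continues through the level-intrinsic cylinders $Q_{\rho_i}^{\La/\rho_i}$, $\rho_i=2^{k-i}$ (with $\La/(2\rho_i)$ when $p<2$). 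There Lemma~\ref{p_poincare_lem} produces $\rho_i(\fiint g^q)^{1/q}+\La^{2-p}\rho_i^{p-1}\fiint g^{p-1}$. Compared against $\La$, the second term gives $(2^{-\ep(i-k)}\La)^{p-1}\lesssim\rho_i^{p-1}(\fiint g^q)^{(p-1)/q}$, which is homogeneous. Hence some $i_w$ satisfies $2^{-\ep(i_w-k)}\La\le c_\ep\rho_{i_w}(\fiint_{Q_{i_w}}g^q)^{1/q}$. Since $|Q_{i_w}|\simeq\La^{2-p}\rho_{i_w}^{n+p}$, a Vitali covering gives $\La^{q^*_\ep}|\{|v-v_{Q_1}|>\La\}|\le c_\ep$ with $q^*_\ep\uparrow\frac{q(n+2)}{n+p-q}$. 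The $n+p$ comes from $\rho^{n+p}$, and the $n+2$ from the extra factor $\La^{2-p}$. Parts (ii)--(iii) use the same chain with $\La=|v(z)-v_{Q_1}|$.

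\textbf{Repair for $p>2$ only.} Your fixed-geometry scheme can be repaired when $p>2$, but by changing the shape of the cylinders rather than the exponents. Take $B_r\times(t-r^{\beta},t+r^{\beta})$ with $\beta=2+\frac{(n+2)(p-2)}{q-p+2}$. The global bound then turns the inhomogeneous term into exactly $c\,r(\fiint h^q)^{1/q}$. This gives a homogeneous Poincaré inequality in a space of dimension $\frac{q(n+2)}{q-p+2}$, whose Sobolev exponent is $\frac{q(n+2)}{n+p-q}$ and whose critical value is $q=n+p$.
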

Note that if $\tfrac{2n}{n+2}<p<2$, then we may choose $q=p$ in \eqref{p_poincare_ieq}. On the other hand, estimates in (ii) and (iii) recover corresponding estimates in Theorem~\ref{main_left}. However, these estimates hold only for cylinders that satisfy \eqref{p_main_left_stopping} and they do not imply BMO or H\"older continuity properties as in Remark~\ref{main_left_rmk}.

\begin{proof}[Proof of \eqref{p_poincare_ieq}]
We consider several cases and follow the approach in \cite{MR3895752}.

\textit{Case~1}: $2<p<\infty$.
    We apply Lemma~\ref{p_scal_inv} with the notation \eqref{p_scale} and have
    \[
     \fiint_{Q_1}  (|\na v|+|G|)^q\,dz = \frac{1}{\la^q}\fiint_{Q} (|\na u|+|F|)^q\,dz\le 1,
    \]
    where  $Q_1$ is as in Lemma~\ref{p_scal_inv}.
    For $\La>1$, let $k\ge0$ be an integer such that $2^k<\La\le 2^{k+1}$.
     We may assume that
\begin{equation}\label{p_poincare_ieq_lv}
    \cs=
    \Biggl( \frac{2^{n+3p+2}c}{1-\left( \frac{1}{2} \right)^\frac{p-2}{q} } \Biggr)^\frac{q}{q-p+2} \le 2^k,
\end{equation}
where $c=c(n,p,L)>0$ is the constant in Lemma~\ref{p_poincare_lem}.
For every $w\in \{ z\in Q_1: |v(z)-v_{Q_1}|>\La  \}$ we consider a sequence consisting of two types of intrinsic cylinders. Let 
\[
\bigl\{ Q_i=Q_{1}^{2^i}(z_i) \bigr\}_{i=2}^{ k } 
\quad\text{and}\quad 
\bigl\{Q_i=Q_{\rho_i}^{\frac{\La}{\rho_i}}(z_i) \bigr\}_{i=k+1}^\infty
\]
be such that
\[
 \rho_i=2^{-i+k}, \quad Q_{i+1}\subset Q_i 
 \quad\text{and}\quad \lim_{i\to\infty} v_{Q_i}=v(w).
\]
 By Lemma~\ref{p_poincare_lem} we get
    \begin{align*}
            \La
            &< |v(w)-v_{Q_1}|
            \le \sum_{i=1}^\infty |v_{Q_{i+1}}-v_{Q_i}|
            \le \sum_{i=1}^\infty\frac{|Q_i|}{|Q_{i+1}|}\fiint_{Q_i}|v-v_{Q_i}|\,dz \\
            &\le  2^{n+3p} c \sum_{i=1}^k \biggl( \biggl(\fiint_{Q_i} g^q\,dz \biggr)^\frac{1}{q}+ 2^{i(2-p)}\fiint_{Q_i} g^{p-1}  \,dz  \biggr) \\
            &\qquad + 2^{n+p}c \sum_{i=k+1}^\infty \rho_i \biggl( \biggl(\fiint_{Q_i} g^q\,dz \biggr)^\frac{1}{q}+ \La^{2-p}\rho_i^{p-2} \fiint_{Q_i} g^{p-1}  \,dz  \biggr),
    \end{align*}
where $g=|\na v|+|G|$. 
Since $ (g^q)_{Q_1}\le 1$ and \eqref{p_poincare_ieq_lv} holds, we have
\begin{align*}
    &2^{n+3p} c \sum_{i=1}^k \biggl( \biggl(\fiint_{Q_i} g^q\,dz \biggr)^\frac{1}{q}+ 2^{i(2-p)}\fiint_{Q_i} g^{p-1}  \,dz  \biggr)\\
    &\qquad\le 2^{n+3p} c \sum_{i=1}^k \biggl( 2^\frac{(p-2)i}{q}\biggl(\fiint_{Q_1} g^q\,dz \biggr)^\frac{1}{q}+ \fiint_{Q_1} g^{p-1}  \,dz  \biggr)\\
    &\qquad\le 2^{n+3p+1}c \sum_{i=1}^k 2^{\frac{(p-2)i}{q} } 
    \le \Biggl( \frac{2^{n+3p+1}c}{1-\left( \frac{1}{2} \right)^\frac{p-2}{q} } \Biggr) 2^{\frac{(p-2)k}{q}} \\
    &\qquad\le \Biggl( \frac{2^{n+3p+1}c}{1-\left( \frac{1}{2} \right)^\frac{p-2}{q} } \Biggr) 2^{\frac{(p-2-q)k}{q}} \La \le \frac{1}{2}\La.
\end{align*}
Therefore we obtain
\[
 \La \le c \sum_{i=k+1}^\infty \rho_i \biggl( \biggl(\fiint_{Q_i} g^q\,dz \biggr)^\frac{1}{q}+ \La^{2-p}\rho_i^{p-2} \fiint_{Q_i} g^{p-1}  \,dz  \biggr).
\]
Let $0<\ep<1$ be determined later and note that
\begin{align*}
    &  \min\{ (1-2^{-\ep}), (1-2^{-(p-1)\ep}) \} \sum_{i=k+1}^\infty( 2^{-\ep(i-k)} + 2^{-(p-1)\ep(i-k)}   )\La \\
    &\qquad\le 2\La 
    \le c \sum_{i=k+1}^\infty \rho_i \biggl( \biggl(\fiint_{Q_i} g^q\,dz \biggr)^\frac{1}{q}+ \La^{2-p}\rho_i^{p-2} \fiint_{Q_i} g^{p-1}  \,dz  \biggr).
\end{align*}
Comparing each term in the summations, there exists $i_w\ge k+1$ such that
\begin{align*}
    &  ( 2^{-\ep(i_w-k)} + 2^{-(p-1)\ep(i_w-k)}   )\La \\
    &  \qquad\le c_\ep \biggl( \rho_{i_w} \biggl(\fiint_{Q_{i_w}} g^q\,dz \biggr)^\frac{1}{q}+ \La^{2-p}\rho_{i_w}^{p-1} \left(\fiint_{Q_{i_w}} g^q\,dz \right)^\frac{p-1}{q}  \biggr),
\end{align*}
where $c_\ep=c_\ep(n,p,L,\ep)$. If the first term on the right-hand side is bigger than the second term, we have
\begin{equation}\label{p_poincare_ieq_cal_1}
    2^{-\ep(i_w-k)}\La \le c_\ep \rho_{i_w} \biggl(\fiint_{Q_{i_w}} g^q\,dz \biggr)^\frac{1}{q}.
\end{equation}
On the other hand, if the second term is bigger than the first term, then we have
\[
2^{-(p-1)\ep(i_w-k)}\La \le c_\ep\La^{2-p}\rho_{i_w}^{p-1} \biggl(\fiint_{Q_{i_w}} g^q\,dz \biggr)^\frac{p-1}{q}.
\]
Thus in both cases \eqref{p_poincare_ieq_cal_1} holds. Since $\rho_{i_w}=2^{-i_w+k}$, we have
\[
\La^{q + \frac{q(\ep-1)(p-2)}{n+p} } |Q_{i_w}|^{1+\frac{(\ep -1 )q}{n+p}} = \La^q \rho_{i_w}^{q(\ep-1)} |Q_{i_w}| \le c_\ep \iint_{Q_{i_w}} g^q\,dz.
\]
Denoting
\[
q^*_\ep =q\left( 1+ \frac{(\ep-1)(p-2)}{n+p} \right) \left( 1+\frac{(\ep-1)q}{n+p} \right)^{-1},
\]
the above inequality becomes
\begin{equation}\label{p_poincare_ieq_cal2}
    \La^{q^*_\ep} |Q_{i_w}|\le c_\ep \biggl( \iint_{Q_{i_w}} g^q\,dz\biggr)^{ \left( 1+\frac{(\ep -1 )q}{n+p} \right)^{-1} }.
\end{equation}

Then we apply a standard covering lemma. Since \eqref{p_poincare_ieq_cal2} holds for every point in $\{ z\in Q_1: |v(z)-v_{Q_1}|>\La  \}$, there exists a countable disjoint subfamily $\{\Q_j\}_{j=1}^\infty$  of $\{Q_{i_w}:w\in \{ z\in Q_1: |v(z)-v_{Q_1}|>\La  \} \}$ such that the union of $\{5\Q_j\}_{j=1}^\infty$ covers $\{ z\in Q_1: |v(z)-v_{Q_1}|>\La  \}$. Therefore, for $\La> 2^k\ge \cs$ with non-negative integer $k$ and a constant $\cs$ defined in \eqref{p_poincare_ieq_lv} we get
    \[
    \La^{ q^*_\ep  } | \{ z\in Q_1: |v(z)-v_{Q}|>\La  \} |
     \le c_\ep \left( \iint_{Q_1} g^q\,dz \right)^{\left( 1+\frac{(\ep-1)q}{n+p} \right)^{-1}}\le c_\ep.
    \]
Here we again used $(g^q)_{Q_1}\le 1$. For the case $0<\La \le 2\cs$ we trivially have
\[
 \La^{ q^*_\ep  } | \{ z\in Q_1: |v(z)-v_{Q}|>\La  \} |
     \le c_\ep
\]
and thus we obtain
\[
 \sup_{\La>0} \La^{ q^*_\ep  } | \{ z\in Q_1: |v(z)-v_{Q}|>\La  \} |
     \le c_\ep.
\]

For given $q^*$ we take $0<\ep<1$ small enough so that $q^*< q^*_\ep$. Then we have
    \[
    \biggl( \fiint_{Q_1 } |v-v_{Q_1}|^{q^*}\,dz  \biggr)^\frac{1}{q^*}
            \le c \sup_{\La>0} \La| \{ z\in Q_1: |v(z)-v_{Q_1}|>\La  \} |^\frac{1}{q^*_\ep}\le c,
    \]
    where $c=c(n,p,q,q^*,L)$. 
    By scaling back the above inequality and using \eqref{p_main_left_stopping}, we obtain
   \[
    \biggl( \fiint_{Q } \frac{|u-u_{Q}|^{q^*}}{\rho^{q^*}} \,dz  \biggr)^\frac{1}{q^*}\le c\la \le c\biggl(  \fiint_{Q} (|\na u|+|F|)^q\,dz \biggr)^\frac{1}{q},
    \]
    where $c=c(n,p,q,q^*,L,\gamma)$.
    This completes the proof.

\textit{Case~2}: $1<p<2$.
In this case we modify the proof of the previous case.
For $\La>0$, let $k\ge0$ be an integer such that $2^k<\La\le 2^{k+1}$. 
We may assume that
\begin{equation}\label{p_poincare_ieq_lv2}
    \cs=
    \Biggl( \frac{ 2^{n+p+2} }{ 1-\left( \frac{1}{2} \right)^ \frac{(2-p)n}{ 2 q} } \Biggr)^{ \left(  \frac{2-p}{ p } \left(  -\frac{n}{q}+1 +\frac{p}{2-p} \right)  \right)^{-1} } \le 2^k,
\end{equation}
where $c=c(n,p,L)>0$ is the constant in Lemma~\ref{p_poincare_lem}.
Note that the condition on $q$ guaranties that $-\tfrac{n}{q}+1 +\tfrac{p}{2-p}>0$.

We slightly abuse the notation and denote by $\tfrac{2k}{p}$ the largest integer smaller than $\tfrac{2k}{p}$.
For every $w\in \{ z\in Q_1: |v(z)-v_{Q_1}|>\La  \}$ let 
\[
\Bigl\{Q_i= Q_{2^\frac{(p-2)i}{2}} ^{2^i} (z_i) \Bigr\}_{i=2}^{\frac{2k}{p}}
\quad\text{and}\quad 
\bigl\{Q_i=Q_{\rho_i}^{\frac{\La}{2\rho_i} }(z_i)\bigr\}_{i=\frac{2k}{p}+1}^\infty
\]
such that
\[
 \rho_i=2^{-i+k}, \quad Q_{i+1}\subset Q_i 
 \quad\text{and}\quad \lim_{i\to\infty} v_{Q_i}=v(w).
\]
 Again we apply Lemma~\ref{p_poincare_lem} to have
    \begin{align*}
            \La
            &\le   2^{n+p}c \sum_{i=1}^{\frac{2k}{p}} 2^{\frac{(p-2)i}{2}} \biggl( \biggl(\fiint_{Q_i} g^q\,dz\biggr)^\frac{1}{q}+  \fiint_{Q_i} g^{p-1}  \,dz  \biggr)\\
            &\qquad+   2^{n+p}c \sum_{i=\frac{2k}{p}+1}^\infty \rho_i \biggl( \biggl(\fiint_{Q_i} g^q\,dz\biggr)^\frac{1}{q}+ \La^{2-p} \rho_i^{p-2} \fiint_{Q_i} g^{p-1}  \,dz  \biggr).
    \end{align*}
Note that
\begin{align*}
    &2^{n+p}c \sum_{i=1}^{\frac{2k}{p}} 2^{\frac{(p-2)i}{2}} \biggl( \biggl(\fiint_{Q_i} g^q\,dz\biggr)^\frac{1}{q}+  \left( \fiint_{Q_i} g^q  \,dz \right)^\frac{p-1}{q}  \biggr)\\
    &\qquad\le 2^{n+p+1}c \sum_{i=1}^{\frac{2k}{p}} 2^{\frac{(p-2)i}{2} +\frac{(2-p)n i}{2q} }
    \le \Biggl(  \frac{2^{n+p+1}c}{ 1-\left(  \frac{1}{2} \right)^{  \frac{(2-p)n}{ 2 q}  } }  \Biggr) 2^{   \frac{2-p}{ 2 } \left(  \frac{n}{q}-1 \right)  \frac{2k}{p}  }\\
    &\qquad\le\Biggl(  \frac{2^{n+p+2}c}{ 1-\left(  \frac{1}{2} \right)^{  \frac{(2-p)n}{ 2 q} } }  \Biggr) 2^{   \frac{2-p}{ p } \left(  \frac{n}{q}-1 -\frac{p}{2-p} \right)  k  } \frac{1}{2}\La
    \le\frac{1}{2}\La,
\end{align*}
where we used the fact that $(g^q)_{Q_1}\le 1$ and \eqref{p_poincare_ieq_lv2}. It follows that
\[
 2\La \le c \sum_{i=\frac{2k}{p}+1}^\infty \rho_i \biggl( \biggl(\fiint_{Q_i} g^q\,dz\biggr)^\frac{1}{q}+ \La^{2-p} \rho_i^{p-2} \fiint_{Q_i} g^{p-1}  \,dz  \biggr)
\]
We again break the left-hand side into a summation
\[
 \min\{ (1-2^{-\ep}), (1-2^{-(p-1)\ep}) \} \sum_{i=\frac{2k}{p}+1}^\infty( 2^{-\ep(i-k)} + 2^{-(p-1)\ep(i-k)}   )\La \le 2\La
\]
and conclude that there exists $i_w>\tfrac{2k}{p}$ such that
\[
  2^{-\ep(i_w-k)}\La \le c_\ep \rho_{i_w}\biggl(\fiint_{Q_{i_w}} g^q\,dz\biggr)^\frac{1}{q}.
\]
Recall that $\rho_{i_w}=2^{-i_w+k}$ and $|Q_{i_w}|= c2^{(-i_w+k)(n+p)}\La^{2-p}$, where $c=c(n)$. We obtain
\[
 \La^{ q + \frac{q (\ep-1)(p-2) }{n+p} } |Q_{i_w}|^{1+\frac{(\ep-1)q}{n+p}} 
 \le c_\ep \iint_{Q_{i_w}} g^q\,dz.
\]
Moreover, as in the previous case, we get by the Vitali covering lemma that
\[
\La^{q^*_\ep} | \{ z\in Q_1: |v(z)-v_{Q}|>\La  \} |
     \le c_\ep ,
\]
where
\[
 q^*_\ep= q\left( 1 + \frac{(\ep-1)(p-2) }{n+p} \right) \left( 1+\frac{(\ep-1)q}{n+p} \right)^{-1}.
\]
Following the same argument as in the previous case, the conclusion follows. We omit the details.
\end{proof}

\begin{proof}[Proof of \eqref{p_poincare_ieq_2} and \eqref{p_poincare_ieq_22}]
    We again apply Lemma~\ref{p_scal_inv} as in the proof of \eqref{p_poincare_ieq} by setting $\La=|v(z)-v_{Q_1}|$ with a condition that if $2<p<\infty$ and $\cs<2^k< |v(z)-v_{Q_1}|\le 2^{k+1}$, where $k$ is an non-negative integer and $\cs$ is defined in \eqref{p_poincare_ieq_lv}  and if $1<p<2$ and $\cs<2^k<|v(z)-v_{Q_1}|\le 2^{k+1}$, where $k$ is an non-negative integer and $\cs$ is defined in \eqref{p_poincare_ieq_lv2}.
    
    First we consider $2<p<\infty$.
    Employing Lemma~\ref{p_poincare_lem} we get
  \begin{align*}
  |v(z)-v_{Q_1}| 
          &\le c \sum_{i=k+1}^\infty \rho_i\fiint_{Q_i} g\,dz   \\
          &\qquad+ c|v(z)-v_{Q_1}|^{2-p}\sum_{i=k+1}^\infty \rho_i^{p-1} \fiint_{Q_i} g^{n+p}\,dz,
  \end{align*}
  where $c=c(n,p,L)$. 
  By comparing the first summation and the second summation on the right-hand side, either of the following holds
  \begin{equation}\label{p_poincare_cal3}
      |v(z)-v_{Q_1}| 
          \le c \sum_{i=k+1}^\infty \rho_i \fiint_{Q_i} g \,dz
  \end{equation}
  or
  \begin{equation}\label{p_poincare_cal4}
      |v(z)-v_{Q_1}|^{p-1} 
  \le c\sum_{i=k+1}^\infty \rho_i^{p-1} \fiint_{Q_i} g^{p-1} \,dz .
  \end{equation}

  We fix a positive real number $m>n+p$ and first consider \eqref{p_poincare_cal3}. Recall that $|Q_i|=c\La^{2-p}\rho_i^{n+p}$ and $\rho_i=2^{-i+k}$ for $i\ge k+1$. We estimate the right-hand side of \eqref{p_poincare_cal3} as
  \begin{align*}
      &\rho_i \fiint_{Q_i} g \,dz
      = \rho_i \biggl( \fiint_{Q_i} g \,dz\biggr)^{1-\frac{n+p}{m}} \left( \fiint_{Q_i} g \,dz\right)^{\frac{n+p}{m}}\\
      &\qquad\le \rho_i \biggl( \fiint_{Q_i} g^{n+p} \,dz\biggr)^{\frac{1}{n+p}-\frac{1}{m}} (M^*(g^{p-1} 1_{Q_1}))^{\frac{n+p}{m(p-1)}}\\
      &\qquad\le c 2^{\frac{(-i+k)(n+p)}{m} } \La^{ \frac{p-2}{n+p} - \frac{p-2}{m} }
       \biggl( \iint_{Q_i} g^{n+p} \,dz\biggr)^{\frac{1}{n+p}-\frac{1}{m}}  (M^*(g^{p-1} 1_{Q_1}))^{\frac{n+p}{m(p-1)}}.
  \end{align*}
  Since $(g^{n+p})_{Q_1} \le 1$ and $|Q_1|=c$, where $c=c(n)$, we deduce that
  \[
  |v(z)-v_{Q_1}| \le c 2^{-\frac{n+p}{m}} \La^{ \frac{p-2}{n+p} - \frac{p-2}{m} } (M^*(g^{p-1} 1_{Q_1}))^{\frac{n+p}{m(p-1)}}.
  \]
  As we set $\La=|v(z)-v_{Q_1}|>1$ and $p>2$, the above inequality becomes
  \begin{equation}\label{p_poincare_cal5}
  \begin{split}
       |v(z)-v_{Q_1}|^\frac{n+2}{n+p} 
       &<|v(z)-v_{Q_1}|^{1-\frac{p-2}{n+p} + \frac{p-2}{m} } \\
       &\le c 2^{-\frac{n+p}{m}}  (M^*(g^{p-1} 1_{Q_1}))^{\frac{n+p}{m(p-1)}}.
    \end{split}
  \end{equation}
  On the other hand if \eqref{p_poincare_cal4} holds, then we have
  \begin{align*}
      \rho_i^{p-1} \fiint_{Q_i} g^{p-1} \,dz
      &\le \rho_i^{p-1} \biggl( \fiint_{Q_i} g^{p-1} \,dz\biggr)^{1-\frac{n+p}{m}} \biggl( \fiint_{Q_i} g^{p-1} \,dz\biggr)^{\frac{n+p}{m}}\\
      &\le c 2^{ {\frac{-i+k}{m} } (n+p)(p-1)  }\La^{ \left( \frac{p-2}{n+p}-\frac{p-2}{m}\right) (p-1) }\\
      &\qquad\times 
       \biggl( \iint_{Q_i} g^{n+p} \,dz\biggr)^{\frac{p-1}{n+p}-\frac{p-1}{m}}  (M^*(g^{p-1} 1_{Q_1}))^{\frac{n+p}{m}}.
  \end{align*}
  Therefore \eqref{p_poincare_cal5} again holds. 
  Then by \eqref{p_poincare_cal5} we get
  \begin{align*}
      \iint_{Q_1} |v-v_{Q_1}|^{\frac{(n+2)m}{n+p}} \,dz 
      &\le \iint_{ \{z\in Q_1: |v(z)-v_{Q_1}|\le 2\cs \} } |v-v_{Q_1}|^{\frac{(n+2)m}{n+p}} \,dz  \\
      &\qquad + \iint_{ \{z\in Q_1: |v(z)
      -v_{Q_1}|> \cs \} } |v-v_{Q_1}|^{\frac{(n+2)m}{n+p}} \,dz \\
      &\le c^m +c^m \iint_{Q_1} (M^*(g^{p-1} 1_{Q_1}))^\frac{n+p}{p-1} \,dz.
  \end{align*}
  Furthermore, by the maximal function theorem for the strong maximal function and the fact $(g^{n+p})_{Q_1}\le 1$, we obtain
  \[
      \fiint_{Q_1} |v-v_{Q_1}|^{\frac{(n+2)m}{n+p}} \,dz \le c^m.
  \]
  For the case $0<m\le n+p$ we may apply \eqref{p_poincare_ie2} to have the above inequality. Thus the above inequality holds for any $m>0$ with $c=c(n,p,L)$.

  By the power series representation of the exponential function and by applying the inequality above, we obtain
  \begin{align*}
  \fiint_{Q_1} \exp \big(  |v-v_{Q_1}|^{\frac{n+2}{n+p}}  \big) \,dz 
      &\le \sum_{m=0}^\infty \frac{1}{m!}\fiint_{Q_1} |v-v_{Q_1}|^{\frac{(n+2)m}{n+p}} \,dz\\
      &\le \sum_{m=0}^\infty \frac{c^m}{m!} \le c.
  \end{align*}
  Hence the proof of \eqref{p_poincare_ieq_2} is completed by scaling back the left-hand side and using \eqref{p_main_left_stopping}.

  The case $1<p<2$ is analogous. The difference arises in the exponent of the second term of \eqref{p_poincare_cal5}. Since $1-\tfrac{p-2}{n+p}+\tfrac{p-2}{m}>1$ holds for $m>n+p$, we have
  \[
  |v(z)-v_{Q_1}|\le c 2^{-\frac{n+p}{m}}  (M^*(g 1_{Q_1}))^{\frac{n+p}{m}}.
  \]
  Therefore we get
  \[
  \fiint_{Q_1} |v-v_{Q_1}|^m \,dz \le c^m
  \]
  and thus 
  \[
  \fiint_{Q_1} \exp (  |v-v_{Q_1}|  ) \,dz \le c.
  \]
  The desired estimate follows by scaling back the left-hand side and using \eqref{p_main_left_stopping}.
\end{proof}

\begin{proof}[Proof of \eqref{p_poincare_ieq_3}]
We only consider $2< p<\infty$, since the case $1<p<2$ is analogous.
    We follow the argument in the proof of \eqref{p_poincare_ieq_2} and \eqref{p_poincare_ieq_22} to arrive at the point where either \eqref{p_poincare_cal3} or \eqref{p_poincare_cal4} holds.
    Suppose \eqref{p_poincare_cal3} holds first. Then if $|v(z)-v_{Q_1}|>\cs$, then we apply H\"older's inequality to have
    \begin{align*}
        |v(z)-v_{Q_1}|
        &\le c \sum_{i=k+1}^\infty \rho_i \fiint_{Q_i} g \,dz \\
        &\le c \sum_{i=k+1}^\infty \rho_i^{1-\frac{n+p}{q}} |v(z)-v_{Q_1}|^\frac{p-2}{q} \biggl(\iint_{Q_i} g^q \,dz\biggr)^\frac{1}{q}\\
        &\le c \sum_{i=k+1}^\infty 2^{ (-i+k) \left(  1-\frac{n+p}{q}  \right) } |v(z)-v_{Q_1}|^\frac{p-2}{q} \biggl(\iint_{Q_1} g^q \,dz\biggr)^\frac{1}{q}\\
        &\le c|v(z)-v_{Q_1}|^\frac{p-2}{q}.
    \end{align*}
    Therefore we have $|v(z)-v_{Q_1}|\le c$, where $c=c(n,p,q,L)$. A similar calculation leads to the same conclusion for the case \eqref{p_poincare_cal4}. 
    The remaining case can be covered by replacing the constant by $2\cs$. By scaling back, the conclusion follows. This completes the proof.
\end{proof}

\begin{remark}
    In the proof above, the first inequality in \eqref{p_main_left_stopping} was used at the last stage when we apply scaling back to replace $\lambda$ by
    \[
    \biggl(\fiint_Q (|\na u|+|F|)^q\, dz\biggr)^\frac{1}{q}.
    \]
    Leaving $\lambda$ in the scaling back process, the estimate still holds with only the second inequality in \eqref{p_main_left_stopping}.
\end{remark}

Next we discuss the consequence of Theorem~\ref{p_main_left}. Note that if $\tfrac{2n}{n+2}< p<\infty$ and \eqref{p_main_left_stopping} holds for $q<p$ sufficiently close to $p$, then weak solutions belongs to $L^2$.
To begin with, we present a Caccioppoli inequality in the intrinsic geometry.

\begin{lemma}
	Let $\tfrac{2n}{n+2}< p<\infty$.
    Assume that $|F|\in L^p_{\loc}(\Omega_T)$ and that $u\in  L^p_{\loc}(0,T;W^{1,p}_{\loc} (\Om;\mathbb{R}^N))$ is a weak solution to \eqref{p_eq}. 
    Moreover, assume that there exist a parabolic cylinder $Q_{2\rho}^\la=Q_{2\rho}^\la(z_0)=B_{2\rho}(x_0)\times I^\la_{2\rho}(t_0)\Subset\Om_T$ and an exponent $1<q\le p$ satisfying $2<\tfrac{q(n+2)}{n+p-q}$ such that
    \begin{equation}\label{cacc_stopping}
    \biggl(\fiint_{Q_{2\rho}^\la} (|\na u|+|F|)^q \, dz\biggr)^\frac{1}{q} \le \lambda.
    \end{equation}
    Then $u\in L^\infty (I_{2\rho}(t_0);L^2(B_{2\rho}(x_0);\mathbb{R}^N))$.
    Moreover, there exists a constant $c=c(n,N,p,\nu,L)$ such that, for any $0<\rho<\rho_1<\rho_2< 2\rho$, we have
    \begin{equation}\label{p_cacc_ie}
	\begin{split}
			&\esssup_{t\in I^\la_{\rho_1}(t_0)}\la^{p-2}\fint_{B_{\rho_1}(x_0)}\frac{|u-u_{Q^\la_{\rho_1}(z_0)}|^2}{\rho_1^2}\,dx+\fiint_{ Q^\la_{\rho_1}(z_0)}|\na u|^p\,dz\\
			&\qquad\le c\fiint_{Q^\la_{\rho_2}(z_0)} \frac{|u-u_{Q^\la_{\rho_2}(z_0)}|^p}{(\rho_2-\rho_1)^p}\,dz + c\la^{p-2}\fiint_{Q^\la_{\rho_2}(z_0)} \frac{|u-u_{Q^\la_{\rho_2}(z_0)}|^2}{(\rho_2-\rho_1)^2}\,dz  \\
            &\qquad\qquad+c\fiint_{Q^\la_{\rho_2}(z_0)}  |F|^p  \,dz.
	\end{split}
    \end{equation}
\end{lemma}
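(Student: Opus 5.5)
The plan has two parts. First I show that $u\in L^2$ on $Q^\la_{2\rho}$. Then I derive the Caccioppoli estimate \eqref{p_cacc_ie} by the standard energy method with a Steklov regularization in time.

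\textbf{$L^2$ integrability.} The stopping condition \eqref{cacc_stopping} is only an upper bound. By the remark after the proof of Theorem~\ref{p_main_left}, the estimate \eqref{p_poincare_ieq} still holds on $Q^\la_{2\rho}$ with right-hand side $c\la$. Since $2<\tfrac{q(n+2)}{n+p-q}$, we may take $q^*=2$ there. This gives
\[
\fiint_{Q^\la_{2\rho}}\frac{|u-u_{Q^\la_{2\rho}}|^2}{(2\rho)^2}\,dz\le c\la^2 ,
\]
so in particular $u\in L^2(Q^\la_{2\rho})$. Two points need care here. The application requires $\la\ge1$; I would remove this by the scaling of Lemma~\ref{p_scal_inv}, or simply observe that only finiteness matters at this step. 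In the case $1<p<2$, the theorem also requires $q>\tfrac{(2-p)n}{2}$; I expect this to follow from $2<\tfrac{q(n+2)}{n+p-q}$ together with $p>\tfrac{2n}{n+2}$, after a short computation. Once $u\in L^2$ is known, the time term $u\cdot\varphi_t$ in \eqref{p_weak_sol} is meaningful for the energy argument, and $L^\infty_t L^2_x$ will follow from the energy estimate below.

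\textbf{Energy estimate.} Set $a=u_{Q^\la_{\rho_2}}$. I take a spatial cutoff $\eta\in C_0^\infty(B_{\rho_2})$ with $\eta=1$ on $B_{\rho_1}$ and $|\na\eta|\le c/(\rho_2-\rho_1)$. I take a time cutoff $\zeta$ that vanishes near the bottom of $I^\la_{\rho_2}$, equals $1$ on $I^\la_{\rho_1}$, and satisfies $|\zeta_t|\le c\la^{p-2}/(\rho_2-\rho_1)^2$. I also truncate in time at a level $\tau$. The test function is the Steklov average of $(u-a)\eta^p\zeta$, and after testing I pass to the limit in the Steklov parameter; this step uses $u\in L^2$. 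The result is
\[
\sup_\tau\frac12\int_{B}|u-a|^2\eta^p\zeta\,dx+\iint\mathcal A(z,\na u)\cdot\na u\,\eta^p\zeta\,dz
\le\iint\frac{|u-a|^2}{2}|\zeta_t|\,dz+\text{(terms with }\na\eta\text{ and }F).
\]
I then treat the right-hand side term by term:
\begin{itemize}
\item Using \eqref{p_ellipticity}, the term $|\mathcal A||u-a|p\eta^{p-1}|\na\eta|$ is bounded by Young's inequality by $\epsilon|\na u|^p\eta^p+c_\epsilon|u-a|^p/(\rho_2-\rho_1)^p$, and the first part is absorbed on the left.
\item The $F$ terms are handled the same way, producing $|F|^p$.
\item The $\zeta_t$ term gives $\la^{p-2}|u-a|^2/(\rho_2-\rho_1)^2$.
\end{itemize}

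\textbf{Normalization.} Divide by $|Q^\la_{\rho_1}|\simeq\rho_1^{n+2}\la^{2-p}$; this ratio is comparable to $|Q^\la_{\rho_2}|$ because $\rho_1,\rho_2\in(\rho,2\rho)$. The supremum term then acquires the factor $\la^{p-2}/\rho_1^2$. Finally, replacing $a$ by $u_{Q^\la_{\rho_1}}$ inside the $L^2$ supremum costs at most a factor $2$, since
\[
\fint|u-u_{Q_1}|^2\le 2\fint|u-a|^2+2|a-u_{Q_1}|^2 ,
\]
and the last term is controlled by averages already bounded.

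The main obstacle is justifying the time-derivative manipulation without an a priori $L^2$ assumption. This is exactly what the first step, via Theorem~\ref{p_main_left}, supplies.
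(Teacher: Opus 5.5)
Your proposal is correct and follows the route the paper indicates. The paper states this lemma without proof, right after observing that the upper stopping bound, together with Theorem~\ref{p_main_left}(i) and the subsequent remark that only the upper inequality is needed, puts $u$ in $L^2$; the standard Steklov energy argument then gives \eqref{p_cacc_ie}. Your check that $q>\tfrac{(2-p)n}{2}$ follows from $2<\tfrac{q(n+2)}{n+p-q}$ and $p>\tfrac{2n}{n+2}$ is also right.

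One adjustment: for $p\ge 2$ you should not invoke Theorem~\ref{p_main_left}. Its hypothesis $q\ge p-1$ can fail (e.g.\ $n=1$, $p=10$, $q=5$), and $q^*=2$ is not admissible when $q>2$. In that range you do not need it anyway, since $u\in L^p_{\loc}\subset L^2_{\loc}$ holds trivially.
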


As in the linear growth case, we can combine \eqref{p_poincare_ieq} and \eqref{p_cacc_ie} to derive a reverse H\"older inequality for the mean oscillation.

\begin{theorem}\label{p_rev_u}
    Let $\tfrac{2n}{n+2}< p<\infty$.
    Assume that $|F|\in L^p_{\loc}(\Omega_T)$ and that $u\in  L^p_{\loc}(0,T;W^{1,p}_{\loc} (\Om;\mathbb{R}^N))$ is a weak solution to \eqref{p_eq}. 
    Moreover, assume that there exist a parabolic cylinder $2Q=Q_{2\rho}^\la\Subset\Om_T$, an exponent $1<q\le p$ satisfying $\max\{2,p\}<\tfrac{q(n+2)}{n+p-q}$ and a constant $\gamma>1$ such that
    \[
    \gamma^{-1}\lambda \le \biggl(\fiint_{ \alpha Q } (|\na u|+|F|)^q \, dz\biggr)^\frac{1}{q} \le \lambda
    \]
    for any $\alpha$ with $1\le \alpha \le 2$.
    Then for any $q^*$ with $\max\{2,p\}<q^*<\tfrac{q(n+2)}{n+p-q}$ there exists a constant $c=c(n,N,p,q,q^*,\nu,L,\gamma)$ such that
    \[
 \biggl( \fiint_{Q} \frac{|u-u_{Q}|^{q^*}}{\rho^{q^*}}\,dz\biggr)^\frac{1}{q^*}
 \le c \fiint_{2Q} \frac{|u-u_{2Q}|}{2\rho}\,dz
    +c\fiint_{2Q}  |F|^p  \,dz.
\]
\end{theorem}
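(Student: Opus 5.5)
We mimic the proof of Theorem~\ref{main_right_coro}, working on intermediate intrinsic cylinders. Fix $\rho\le\rho_1<\rho_2\le 2\rho$ and set $Q_1=Q^\la_{\rho_1}(z_0)$ and $Q_2=Q^\la_{\rho_2}(z_0)$. These are of the form $\alpha Q$ with $1\le\alpha\le2$, so the hypothesis gives \eqref{p_main_left_stopping} on both of them with the same $\la$ and $\gamma$.

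\textbf{Step 1 (Sobolev--Poincar\'e).} Apply Theorem~\ref{p_main_left}(i) on $Q_1$ with the given $q$ and $q^*$. This yields
\[
\Bigl(\fiint_{Q_1}\tfrac{|u-u_{Q_1}|^{q^*}}{\rho_1^{q^*}}\,dz\Bigr)^{1/q^*}\le c\Bigl(\fiint_{Q_1}(|\na u|+|F|)^q\,dz\Bigr)^{1/q}\le c\Bigl(\fiint_{Q_1}|\na u|^p\,dz\Bigr)^{1/p}+c\Bigl(\fiint_{Q_1}|F|^p\,dz\Bigr)^{1/p},
\]
using H\"older's inequality and $q\le p$.

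\textbf{Step 2 (Caccioppoli).} Estimate the gradient term by \eqref{p_cacc_ie}. Its stopping assumption \eqref{cacc_stopping} holds on $2Q$, and $2<q(n+2)/(n+p-q)$ is part of our hypothesis. This bounds $(\fiint_{Q_1}|\na u|^p)^{1/p}$ by
\[
\Bigl(\fiint_{Q_2}\tfrac{|u-u_{Q_2}|^p}{(\rho_2-\rho_1)^p}\Bigr)^{1/p}+\Bigl(\la^{p-2}\fiint_{Q_2}\tfrac{|u-u_{Q_2}|^2}{(\rho_2-\rho_1)^2}\Bigr)^{1/p}+\Bigl(\fiint_{Q_2}|F|^p\Bigr)^{1/p}.
\]

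\textbf{Step 3 (the $\la^{p-2}$ term).} This is the main obstacle, since it breaks homogeneity. We handle it with the stopping condition on $Q_2$ together with Theorem~\ref{p_main_left}(i) on $Q_2$. Write $X=\bigl(\fiint_{Q_2}|u-u_{Q_2}|^{q^*}/\rho_2^{q^*}\bigr)^{1/q^*}$, so that $X\le c\la$. Then
\[
\la^{p-2}\Bigl(\fiint_{Q_2}\tfrac{|u-u_{Q_2}|^2}{\rho_2^2}\Bigr)\le \la^{p-2}X^2.
\]
\emph{Case $p\ge2$:} $\la^{p-2}X^2\le c\la^p$, and $\la\le\gamma(\fiint(|\na u|+|F|)^q)^{1/q}$ is again controlled. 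This is circular, so instead we keep it as $\la^{(p-2)/p}X^{2/p}$. Then Young's inequality in the form $\la^{(p-2)/p}X^{2/p}\le \delta\la+c_\delta X$ combined with $\la\le\gamma(\ldots)^{1/q}$ does not close either. A cleaner route is to use the lower bound $\la\le \gamma(\fiint_{\alpha Q}(|\na u|+|F|)^q)^{1/q}$ only at the end. So the plan is to bound $\la^{(p-2)/p}X^{2/p}\le c\la^{(p-2)/p}X^{2/p}$ and note that $X\lesssim\la$, which gives $\le cX^{\theta}\la^{1-\theta}$ with the right scaling.

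\emph{Case $p<2$:} here $\la^{p-2}X^2\le X^p\,(X/\la)^{2-p}\cdot c$, which is fine, since $X^2\la^{p-2}\le c X^p$ because $X\le c\la$... but this needs $X\gtrsim\la$. Thus we need both bounds of the stopping condition: $X$ is comparable in scale with $\la$, and lower order terms are dominated.

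\textbf{Step 4 (absorption).} After these reductions, every term on $Q_2$ has the form $(\rho_2/(\rho_2-\rho_1))^{a}\bigl(\fiint_{Q_2}|u-u_{Q_2}|^{r}/\rho_2^{r}\bigr)^{1/r}$ with $r\in\{p,2\}<q^*$. We interpolate each between $L^1$ and $L^{q^*}$ exactly as in Theorem~\ref{main_right_coro}. Young's inequality then gives
\[
\text{LHS}(Q_1)\le\tfrac12\,\text{LHS}(Q_2)+c(\rho_2/(\rho_2-\rho_1))^{b}\fiint_{2Q}\tfrac{|u-u_{2Q}|}{2\rho}+c\,(\text{$F$-terms}).
\]
The iteration lemma \cite[Lemma~6.1]{MR1962933} then yields the statement. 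The $F$-term is written as $\fiint|F|^p$, presumably after using $\la\ge1$-type normalizations or the stopping bound. I expect matching the $F$ power and the $\la^{p-2}$ term to be the delicate bookkeeping.
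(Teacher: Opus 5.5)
Your skeleton matches the paper's proof: \eqref{p_cacc_ie} and Theorem~\ref{p_main_left}(i) on intermediate cylinders $Q_1\subset Q_2$, then interpolation, Young and the iteration lemma. The gap is Step~3, the only point that is new compared with Theorem~\ref{main_right_coro}. You never actually carry it out, and for $p>2$ what you write does not close. The bound $cX^{\theta}\lambda^{1-\theta}$ still contains $\lambda$. Trading $\lambda$ for the oscillation would need $\lambda\lesssim X$, a lower bound on $u-u_{Q_2}$ that the hypotheses never give: the stopping condition bounds the gradient from below, not the oscillation.

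The Young splitting you discarded is exactly what works. It fails only if you try to absorb the $\lambda$-term into the oscillation on $Q_1$ after combining Steps~1 and~2. Absorb it instead into the gradient on the same cylinder $Q_1$, before invoking Theorem~\ref{p_main_left}. By Young's inequality with exponents $\tfrac{p}{p-2},\tfrac p2$ and H\"older,
\[
\lambda^{p-2}\fiint_{Q_2}\frac{|u-u_{Q_2}|^2}{(\rho_2-\rho_1)^2}\,dz\le \epsilon\lambda^p+c_\epsilon\fiint_{Q_2}\frac{|u-u_{Q_2}|^p}{(\rho_2-\rho_1)^p}\,dz.
\]
The lower bound on $Q_1$ together with $q\le p$ gives $\lambda^p\le 2^{p-1}\gamma^p\bigl(\fiint_{Q_1}|\nabla u|^p\,dz+\fiint_{Q_1}|F|^p\,dz\bigr)$. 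For $\epsilon$ small, the $\lambda^p$ term is absorbed by the left-hand side $\fiint_{Q_1}|\nabla u|^p\,dz$ of \eqref{p_cacc_ie}. No iteration is needed here, since both sides live on $Q_1$. You are left with a homogeneous Caccioppoli inequality; this is the paper's Case~1, after which your Steps~1 and~4 go through.

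For $p<2$ your first inequality is correct, and the caveat is a sign slip. Write $W_2=\bigl(\fiint_{Q_2}|u-u_{Q_2}|^2\rho_2^{-2}\,dz\bigr)^{1/2}$. Theorem~\ref{p_main_left}(i) on $Q_2$ with the upper bound gives $W_2\le X\le c\lambda$. Since $2-p>0$, this yields $\lambda^{p-2}W_2^2=W_2^p(W_2/\lambda)^{2-p}\le cW_2^p$, so no bound $X\gtrsim\lambda$ is needed. Keep $W_2$ rather than replacing it by $X$: otherwise $X$ reappears with a large constant and cannot be absorbed. Interpolate $W_2$ between $L^1$ and $L^{q^*}$, and this case closes.

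This route is slightly simpler than the paper's Case~2. The paper instead multiplies \eqref{p_cacc_ie} by $\lambda^{2-p}$, uses $\bigl(\fiint_{Q_1}|\nabla u|^q\,dz\bigr)^{2/q}\le\lambda^{2-p}\fiint_{Q_1}|\nabla u|^p\,dz$, and absorbs an $\epsilon\lambda^2$ term through the lower bound. Finally, the $F$-term needs no $\lambda\ge1$ normalization. The argument yields $\bigl(\fiint_{2Q}|F|^p\,dz\bigr)^{1/p}$, which is what the paper's proof produces after taking $p$-th roots.
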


\begin{proof}
For $0<\rho<\rho_1<\rho_2< 2\rho$, we denote $Q_1=\tfrac{\rho_1}{\rho} Q$ and $Q_2=\tfrac{\rho_2}{\rho}Q$. We apply \eqref{p_cacc_ie} to have
\begin{align*}
\fiint_{Q_1} |\na u|^p\,dz 
    &\le  c\fiint_{Q_2} \frac{|u-u_{Q_2}|^p}{(\rho_2-\rho_1)^p}\,dz 
    + c\la^{p-2}\fiint_{Q_2} \frac{|u-u_{Q_2}|^2}{(\rho_2-\rho_1)^2}\,dz \\
    &\qquad+c\fiint_{Q_2}  |F|^p  \,dz.
\end{align*}
We consider several cases.

\textit{Case~1}: $2< p<\infty$.
 We use Young's inequality for the second term on the right-hand side to have
 \begin{align*}
     \fiint_{Q_1} |\na u|^p\,dz 
    &\le \frac{1}{2\gamma^p}\la^p + c\fiint_{Q_2} \frac{|u-u_{Q_2}|^p}{(\rho_2-\rho_1)^p}\,dz
    +c\fiint_{Q_2}  |F|^p  \,dz\\
    &\le \frac{1}{2}\fiint_{Q_1} |\na u|^p\,dz + c\fiint_{Q_2} \frac{|u-u_{Q_2}|^p}{(\rho_2-\rho_1)^p}\,dz
    +c\fiint_{Q_2}  |F|^p  \,dz.
 \end{align*}
By \eqref{p_poincare_ieq} we get
\[
 \biggl( \fiint_{Q_1} \frac{|u-u_{Q_1}|^{q^*}}{\rho_1^{q^*}}\,dz\biggr)^\frac{p}{q^*}
 \le c\fiint_{Q_2} \frac{|u-u_{Q_2}|^p}{(\rho_2-\rho_1)^p}\,dz
    +c\fiint_{Q_2}  |F|^p  \,dz,
\]
where $c=c(n,N,p,q,q^*,\nu,L,\gamma)$. An interpolation inequality gives
\[
 \fiint_{Q_2} \frac{|u-u_{Q_2}|^p}{\rho_2^p}\,dz
 \le \biggl( \fiint_{Q_2} \frac{|u-u_{Q_2}|}{\rho_2}\,dz \biggr)^{p\theta}
 \biggl( \fiint_{Q_2} \frac{|u-u_{Q_2}|^{q^*}}{\rho_2^{q^*}}\,dz \biggr)^\frac{p-p\theta}{q^*},
\]
where $0<\theta<1$ satisfies $\tfrac{1}{p}=\theta+\tfrac{1-\theta}{q^*}$. 
Again, by Young's inequality, we obtain
\begin{align*}
    &\biggl( \fiint_{Q_1} \frac{|u-u_{Q_1}|^{q^*}}{\rho_1^{q^*}}\,dz\biggr)^\frac{p}{q^*}
    \le \frac{1}{2}\biggl( \fiint_{Q_2} \frac{|u-u_{Q_2}|^{q^*}}{\rho_2^{q^*}}\,dz\biggr)^\frac{p}{q^*} \\
    &\qquad + c \left( \frac{\rho_2}{\rho_2-\rho_1} \right)^\frac{p}{1-\theta} \biggl( \fiint_{Q_2} \frac{|u-u_{Q_2}|}{\rho_2}\,dz\biggr)^p
    +c\fiint_{Q_2}  |F|^p  \,dz.
\end{align*}
Hence the conclusion follows by absorbing terms. 

\textit{Case~2}: $\tfrac{2n}{n+2}<p<2$. 
In this case we observe that
\begin{align*}
    \left( \fiint_{Q_1} |\na u|^q\,dz \right)^\frac{2}{q}
     &\le \la^{2-p}\fiint_{Q_1} |\na u|^p\,dz \\
    &\le  c\la^{2-p}\fiint_{Q_2} \frac{|u-u_{Q_2}|^p}{(\rho_2-\rho_1)^p}\,dz 
    + c\fiint_{Q_2} \frac{|u-u_{Q_2}|^2}{(\rho_2-\rho_1)^2}\,dz \\
    &\qquad+c\la^{2-p}\fiint_{Q_2}  |F|^p  \,dz.
\end{align*}
Applying Young's inequality to the first and the third terms on the right-hand side, we have
\[
\biggl( \fiint_{Q_1} |\na u|^q\,dz \biggr)^\frac{2}{q}
    \le \frac{1}{2\gamma^2}\la^2 + c\fiint_{Q_2} \frac{|u-u_{Q_2}|^2}{(\rho_2-\rho_1)^2}\,dz 
    +c\biggl(\fiint_{Q_2}  |F|^p  \,dz\biggr)^2.
\]
Since the first term on the right-hand side can be absorbed into the left-hand side, we apply \eqref{p_poincare_ieq} to get
\[
 \biggl( \fiint_{Q_1} \frac{|u-u_{Q_1}|^{q^*}}{\rho_1^{q^*}}\,dz\biggr)^\frac{2}{q^*}
 \le c\fiint_{Q_2} \frac{|u-u_{Q_2}|^2}{(\rho_2-\rho_1)^2}\,dz
    +c\biggl(\fiint_{Q_2}  |F|^p  \,dz\biggr)^2,
\]
where $c=c(n,N,p,q,q^*,\nu,L,\gamma)$. Employing the interpolation inequality and then using absorbing terms, we get the conclusion. We omit the details.
\end{proof}

Then we discuss a reverse H\"older inequality for the gradient.

\begin{theorem}\label{p_rev_grad}
    Let $\tfrac{2n}{n+2}< p<\infty$.
    Assume that $|F|\in L^p_{\loc}(\Omega_T)$ and that $u\in  L^p_{\loc}(0,T;W^{1,p}_{\loc} (\Om;\mathbb{R}^N))$ is a weak solution to \eqref{p_eq}. 
    Moreover, assume that there exist a parabolic cylinder $2Q=Q_{2\rho}^\la\Subset\Om_T$, an exponent $1<q<p$ satisfying $\max\{2,p\}<\tfrac{q(n+2)}{n+p-q}$ and a constant $\gamma>1$ such that
    \begin{equation}\label{q_stopping}
        \gamma^{-1}\lambda \le \biggl(\fiint_{ \alpha Q } (|\na u|+|F|)^q \, dz\biggr)^\frac{1}{q} \le \lambda
    \end{equation}
     for any $\alpha$ with $1\le \alpha \le 2$.
    Then there exists a constant $c=c(n,N,p,q,\nu,L,\gamma)$ such that
    \[
 \biggl( \fiint_{Q} |\na u|^p \,dz\biggr)^\frac{1}{p}
 \le c \biggl(\fiint_{2Q} |\na u|^q\,dz\biggr)^\frac{1}{q}
    +c\biggl(\fiint_{2Q}  |F|^p  \,dz\biggr)^\frac{1}{p}.
\]
\end{theorem}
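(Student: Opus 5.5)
We follow the proof of Theorem~\ref{grad_higher}: combine the intrinsic Caccioppoli inequality \eqref{p_cacc_ie} with the improved Poincar\'e inequality \eqref{p_poincare_ieq}. The stopping condition \eqref{q_stopping} is used twice: to trade the factor $\la^{p-2}$ for gradient averages, and to make \eqref{p_poincare_ieq} available on the cylinders $Q_2=\tfrac{\rho_2}{\rho}Q$ for $\rho<\rho_2<2\rho$.

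\emph{Step 1.} Fix $\rho<\rho_1<\rho_2<2\rho$, write $Q_1=\tfrac{\rho_1}{\rho}Q$, and apply \eqref{p_cacc_ie}:
\begin{align*}
\fiint_{Q_1}|\na u|^p\,dz&\le c\fiint_{Q_2}\frac{|u-u_{Q_2}|^p}{(\rho_2-\rho_1)^p}\,dz+c\la^{p-2}\fiint_{Q_2}\frac{|u-u_{Q_2}|^2}{(\rho_2-\rho_1)^2}\,dz\\
&\qquad+c\fiint_{Q_2}|F|^p\,dz.
\end{align*}

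\emph{Step 2: the oscillation terms.} Since $\max\{2,p\}<\tfrac{q(n+2)}{n+p-q}$, we may choose $q^*$ with $\max\{2,p\}\le q^*<\tfrac{q(n+2)}{n+p-q}$. Because \eqref{q_stopping} holds on $Q_2$, Theorem~\ref{p_main_left}(i) gives
\[
\biggl(\fiint_{Q_2}\frac{|u-u_{Q_2}|^{s}}{\rho_2^{s}}\,dz\biggr)^{\frac1s}\le c\biggl(\fiint_{Q_2}(|\na u|+|F|)^q\,dz\biggr)^{\frac1q}\le c\la
\]
for $s=p$ and for $s=2$, with $c=c(n,p,q,L,\gamma)$. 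For $1<p<2$ we need the lower range condition $q>\tfrac{(2-p)n}{2}$. This follows from $p<\tfrac{q(n+2)}{n+p-q}$, which rearranges to $q>\tfrac{p(n+p)}{n+p+2}$; I expect this inequality to dominate $\tfrac{(2-p)n}{2}$ after a short computation in the range $p>\tfrac{2n}{n+2}$, but this should be checked. With the displayed bound, the second term in Step~1 is at most $c\la^{p-2}\la^2(\rho_2/(\rho_2-\rho_1))^2$, and the first is at most $c(\rho_2/(\rho_2-\rho_1))^p\bigl(\fiint_{Q_2}(|\na u|+|F|)^q\,dz\bigr)^{p/q}$. Now use the lower bound in \eqref{q_stopping} on $2Q$, together with $|2Q|\le c|Q_2|$, to replace $\la^p$ by $\gamma^p\bigl(\fiint_{2Q}(|\na u|+|F|)^q\,dz\bigr)^{p/q}$. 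Fixing $\rho_1=\rho$ and $\rho_2=\tfrac32\rho$ makes all ratios absolute constants, and we obtain
\[
\fiint_Q|\na u|^p\,dz\le c\biggl(\fiint_{2Q}(|\na u|+|F|)^q\,dz\biggr)^{\frac pq}.
\]
No absorption argument is needed.

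\emph{Step 3.} Split $(|\na u|+|F|)^q$ by the triangle inequality and use H\"older's inequality $\bigl(\fiint_{2Q}|F|^q\,dz\bigr)^{1/q}\le\bigl(\fiint_{2Q}|F|^p\,dz\bigr)^{1/p}$, valid since $q<p$. Taking $p$-th roots gives the claim.

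\emph{Main obstacle.} The delicate point is verifying that Theorem~\ref{p_main_left}(i) applies with exponents $q^*\ge\max\{2,p\}$ in both regimes $p>2$ and $p<2$, in particular the lower bound on $q$ when $1<p<2$. A secondary point is that the constant must be uniform over the intermediate cylinders $Q_2$, which is exactly why \eqref{q_stopping} is assumed for every $\alpha\in[1,2]$. If the lower range condition turns out to fail for some $p<2$, the fallback is to interpolate the $L^2$ oscillation between $L^1$ and $L^{q^*}$ and absorb, as in the proof of Theorem~\ref{p_rev_u}.
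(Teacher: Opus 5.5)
This is essentially the paper's proof. The paper likewise inserts \eqref{p_poincare_ieq} with exponents $p$ and $2$ into \eqref{p_cacc_ie}, then uses \eqref{q_stopping} to remove $\la^{p-2}$, with no absorption step. The only difference is bookkeeping. You bound both oscillation terms by $c\la$ and convert $\la^p$ back using the lower bound. The paper keeps the $q$-averages and uses the upper or lower bound in \eqref{q_stopping} according to the sign of $p-2$.

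One correction to the check you flagged for $1<p<2$: the inequality you expect is false. The condition $p<\tfrac{q(n+2)}{n+p-q}$ only gives $q>\tfrac{p(n+p)}{n+p+2}$. This lower bound is smaller than $\tfrac{(2-p)n}{2}$ near $p=\tfrac{2n}{n+2}$; for $n=3$, $p=\tfrac54$ it reads $0.85<1.125$.

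Use instead the half of the hypothesis that is binding when $p<2$. The condition $2<\tfrac{q(n+2)}{n+p-q}$ is equivalent to $q>\tfrac{2(n+p)}{n+4}$, and
\[
4(n+p)-(2-p)n(n+4)=(n+2)\bigl((n+2)p-2n\bigr)>0 \quad\text{for } p>\tfrac{2n}{n+2}.
\]
Hence $q>\tfrac{(2-p)n}{2}$ and Theorem~\ref{p_main_left}(i) applies; the paper does not spell this out either. Your fallback through Theorem~\ref{p_rev_u} would not have helped, since that argument rests on the same estimate \eqref{p_poincare_ieq}.

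Finally, for $p>2$ Theorem~\ref{p_main_left}(i) also requires $q\ge p-1$. This follows from $q>\tfrac{p(n+p)}{n+p+2}$ only when $p\le n+2$. Neither your proposal nor the paper addresses the range $p>n+2$.
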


\begin{proof}
  Since $\max\{2,p\}<\tfrac{q(n+2)}{n+p-q}$ holds, we apply \eqref{p_poincare_ieq} to the right hand side of \eqref{p_cacc_ie}. Then we get
  \begin{align*}
      \fiint_{Q} |\na u|^p \,dz
       &\le c \biggl(  \fiint_{2Q} |\na u|^q \,dz \biggr)^\frac{p}{q}
       + c \la^{p-2}\biggl(  \fiint_{2Q} |\na u|^q \,dz \biggr)^\frac{2}{q}\\
       &\qquad+c \fiint_{2Q}  |F|^p  \,dz\\
       &\le c \biggl(  \fiint_{2Q} |\na u|^q \,dz \biggr)^\frac{p}{q}\
       +c \fiint_{2Q}  |F|^p  \,dz,
  \end{align*}
  where $c=c(n,N,p,\nu,L,\gamma)$. Here, to obtain the last inequality, we used \eqref{q_stopping}. This completes the proof.
\end{proof}

\begin{remark}
    The condition \eqref{q_stopping} can be achieved for each point $z$ such that $|\na u(z)|+|F(z)|>\la$ via a Calder\'on--Zygmund decomposition. Moreover, based on the reverse H\"older inequality for the gradient in the preceding theorem, the integrability of the gradient has a self-improving property. Specifically there exists $\varepsilon_0=\varepsilon_0(n,N,p,\nu,L)$, $0<\varepsilon_0<1$, such that for every $0<\varepsilon<\varepsilon_0$ and any cylinder $Q=B_\rho\times I_\rho$ with $2Q\Subset \Om_T$, $|F|\in L^{p+\varepsilon}(2Q)$ implies $|\na u|\in L^{p+\varepsilon}(Q)$. The proof can be found in \cite{MR1749438}. 
\end{remark}

Finally, we state a refinement of \cite[Theorem 2.8]{KL_veryweak} for very weak solutions to \eqref{p_weak_sol} in the case of $p$-growth.

\begin{theorem}\label{p_very_weak_thm}
    There exists $\beta_0=\beta_0(n,N,p,\nu,L)\in(0,1)$, $\tfrac{2n}{n+2}<p-\beta_0<p<\infty$, such that if $u\in L_{\loc}^{p-\beta}(0,T;W_{\loc}^{1,p-\beta}(\Om;\mathbb{R}^N))$, $0<\beta<\beta_0$,
    is a very weak solution to \eqref{p_eq} in $\Om_T$ and $|F|\in L^p_{\loc}(\Om_T)$, then $u\in L_{\loc}^p(0,T;W_{\loc}^{1,p}(\Omega;\mathbb{R}^N))$
    and $u$ is a weak solution to \eqref{p_eq} in $\Omega_T$.
\end{theorem}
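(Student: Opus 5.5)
The plan mirrors the proof of Theorem~\ref{very_weak}: first show that a very weak solution $u\in L_{\loc}^{p-\beta}(0,T;W_{\loc}^{1,p-\beta}(\Om;\mathbb{R}^N))$ automatically belongs to $L^2_{\loc}(\Om_T;\mathbb{R}^N)$, and then invoke \cite[Theorem 2.8]{KL_veryweak}. That result gives $u\in L_{\loc}^p(0,T;W_{\loc}^{1,p}(\Omega;\mathbb{R}^N))$ for very weak solutions that additionally lie in $L^2_{\loc}$, provided $0<\beta<\beta_1$ for some $\beta_1=\beta_1(n,N,p,\nu,L)$. Once this is available, Remark~\ref{rem_sol} shows that $u$ is a weak solution. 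We take $\beta_0\le\beta_1$, small enough that $p-\beta_0>\tfrac{2n}{n+2}$.

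The key step is the local $L^2$ integrability, and it must not depend on the stopping condition \eqref{p_main_left_stopping}. The scaled estimate of Theorem~\ref{p_main_left} needs that condition, so instead I use the non-intrinsic case $\la=1$. With $\la=1$, Lemma~\ref{p_poincare_lem} reads
\[
\fiint_{Q}\frac{|u-u_{Q}|^{q}}{\rho^{q}}\,dz\le c\fiint_Q|\na u|^q\,dz+c\biggl(\fiint_Q(|\na u|+|F|)^{p-1}\,dz\biggr)^q,
\]
with $q=p-\beta\ge p-1$, on cylinders $Q=Q_\rho^1=B_\rho\times(t_0-\rho^2,t_0+\rho^2)$. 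These are the standard parabolic cylinders with doubling dimension $n+2$.

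Restrict to cylinders with $\rho\le1$ inside a fixed compact set $K\Subset\Om_T$ and write $g=|\na u|+|F|$. By H\"older's inequality the second term is bounded by $(g^q)_Q^{(p-1)}$. Multiplying by $\rho^q$ then gives a Poincar\'e-type inequality of the form
\[
\fiint_Q|u-u_Q|\,dz\le c\rho\bigl((g^q)_Q\bigr)^{1/q}+c\rho\bigl(\|g\|_{L^q(K)}+1\bigr)^{p-1}\cdots,
\]
but this naive bound is not homogeneous. The cleaner route is to run the chaining argument of the proof of \eqref{p_poincare_ieq} directly at $\la=1$. Fix a base cylinder $Q_0\Subset\Om_T$ and scale it by Lemma~\ref{p_scal_inv} with $\la$ chosen as a large power of $\|g\|_{L^q}$, which makes $(g^q)_{Q_1}\le1$ hold in the rescaled cylinder. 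Only the upper bound in \eqref{p_main_left_stopping} is then needed, which by the Remark after the proof of Theorem~\ref{p_main_left} suffices if we keep $\la$ in the final estimate. Running the argument with $\la\ge1$ defined by $\la^q=\max\{1,(g^q)_{Q^\la_\rho}\}$ requires a fixed-point choice of $\la$. Since $\la\mapsto(g^q)_{Q^\la_\rho}$ is continuous, such a $\la$ exists: for $p>2$ the cylinder shrinks in time as $\la$ grows, and for $p<2$ I use the convention $Q^\la_{\rho_\la}$. The output is $u\in L^{q^*}(Q)$ for every $q^*<\tfrac{q(n+2)}{n+p-q}$, uniformly over a cover of $K$.

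It remains to check the exponent. At $q=p$ we have $\tfrac{p(n+2)}{n}>2$ exactly when $p>\tfrac{2n}{n+2}$, so by continuity $\tfrac{q(n+2)}{n+p-q}>2$ for all $q=p-\beta$ with $\beta$ small. Also $q\ge p-1$, and for $p<2$ the extra condition $q>\tfrac{(2-p)n}{2}$ of Theorem~\ref{p_main_left}(i) holds for small $\beta$, again because $p>\tfrac{2n}{n+2}$. Hence $u\in L^2_{\loc}$. The main obstacle I expect is producing an intrinsic cylinder that satisfies the upper half of \eqref{p_main_left_stopping} around every point. My first attempt is the continuity/fixed-point choice of $\la$, together with covering $K$ by finitely many such cylinders. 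If that fails, the fallback is the simpler bound $\la:=\max\{1,\sup_{Q\subset K}(g^q)_Q^{1/q}\}$ on cylinders of a fixed small radius, which is finite by the local integrability of $g^q$ and a fixed radius.
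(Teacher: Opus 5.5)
Your argument is correct, but it is organized differently from the paper's. The paper never proves $u\in L^2_{\loc}(\Om_T)$. It applies Theorem~\ref{p_main_left}(i) only on the intrinsic cylinders produced by the stopping-time argument of \cite{KL_veryweak}, where the two-sided condition \eqref{p_main_left_stopping} is exactly \cite[(4.2)]{KL_veryweak}. It then relies on the observation that the proof in \cite{KL_veryweak} uses square integrability of $u$ only inside those cylinders.

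You instead mirror the proof of Theorem~\ref{very_weak}. By the remark following the proof of Theorem~\ref{p_main_left}, only the upper bound $(g^q)_Q\le\la^q$ is needed if $\la$ is kept in the final estimate. Such cylinders are easy to produce: fix a small radius and take $\la\ge1$ large. This works because $|Q^\la_\rho|\approx\rho^{n+2}\la^{2-p}$ with $q-(p-2)>0$ when $p>2$, and $|Q^\la_{\rho_\la}|\approx\rho^{n+2}\la^{-n(2-p)/2}$ with $q-\tfrac{n(2-p)}{2}>0$ when $p<2$. A finite cover of each compact set then gives $u\in L^2_{\loc}(\Om_T)$, and \cite[Theorem 2.8]{KL_veryweak} is used as a black box.

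Your route is self-contained and yields the stronger intermediate fact that such very weak solutions are automatically locally square integrable. The paper's route uses Theorem~\ref{p_main_left} exactly as stated, including the lower bound, but depends on the internal structure of the proof in \cite{KL_veryweak}.

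Two minor corrections. First, the fixed-point choice of $\la$ is unnecessary: one common $\la$ works for all cylinders of a fixed radius centred in a compact set $K$. Second, your fallback $\la=\max\{1,\sup_Q(g^q)_Q^{1/q}\}$ over non-intrinsic cylinders does not quite work. For $p>2$ the average over $Q^\la_\rho\subset Q_\rho$ can exceed the average over $Q_\rho$ by the factor $\la^{p-2}$, and for $p<2$ the average over $Q^\la_{\rho_\la}\subset Q_\rho$ can exceed it by $\la^{n(2-p)/2}$. Choose instead $\la$ with $\la^{q-p+2}\ge\sup_{Q_\rho}(g^q)_{Q_\rho}$ for $p>2$, and $\la^{q-n(2-p)/2}\ge\sup_{Q_\rho}(g^q)_{Q_\rho}$ for $p<2$. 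Both choices are possible by the same exponent conditions, and the supremum is finite for a fixed radius.
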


\begin{proof}
    We apply Theorem~\ref{p_main_left} (i).
    For $\tfrac{2n}{n+2}<p<\infty$ there exists $q$ with $\tfrac{2n}{n+2}<q<p$, sufficiently close to $p$, such that  $2<\tfrac{q(n+2)}{n+p-q}$.
    If \eqref{p_main_left_stopping} holds in $Q=Q_\rho^\la$, then any very weak solution $u\in L_{\loc}^{p-\beta}(0,T;W_{\loc}^{1,p-\beta}(\Om;\mathbb{R}^N))$ for $q<p-\beta$ belongs to $L^2(Q;\mathbb{R}^N)$. As shown in \cite{KL_veryweak}, the $L^2$ integrability assumption is required only within $Q$ and condition \eqref{p_main_left_stopping} is already verified in \cite[(4.2)]{KL_veryweak}. Hence the conclusion follows directly.
    \end{proof}

\section{Time direction estimates for systems with $p$-growth}

Next we discuss estimates in the time direction. For the proof, we will use the following parabolic Poincar\'e inequality for solutions multiplied by the cutoff function.
\begin{lemma}\label{cut_off_p_poincare_lem}
     Let $2< p<\infty$  and $\la>0$. Assume that $|F|\in L^{p-1}_{\loc}(\Omega_T)$ and that $u\in  L^{p-1}_{\loc}(0,T;W^{1,p-1}_{\loc} (\Om;\mathbb{R}^N))$ satisfies \eqref{p_weak_sol}
 for every $\varphi\in C_0^\infty(\Om_T;\mathbb{R}^N)$.
Moreover, assume that $2Q_r\Subset \Om_T$ and that there exist $\eta\in C_0^1(2Q_r)$ and a constant $c>0$ such that
\begin{equation}\label{localize_ftn}
    \| \eta \|_{L^\infty(2Q_r)}+ r^{-1}\| \na \eta \|_{L^\infty(2Q_r)}+ r^{-2}\| \eta_t \|_{L^\infty(2Q_r)}\le c.
\end{equation}
Then there exists a constant $c=c(n,p,L)$ such that
	\begin{equation}\label{cut_off_p_poincare_ie}
        \begin{split}
            &\fiint_{Q}\frac{|u\eta- (u\eta)_Q|}{\rho}\,dz\le c\fiint_Q \biggl( \frac{|u|}{r}+|\na u| \biggr) 1_{2Q_r}\,dz\\
            &\qquad + c\Bigl( \frac{\rho}{r}+1 \Bigr)\la^{2-p} \fiint_Q \biggl( \frac{|u|}{r}+|\na u|+|F|+1\biggr)^{p-1} 1_{2Q_r} \,dz,
        \end{split}
    \end{equation}
    for every parabolic cylinder $Q=\Qla \subset\mathbb{R}^{n+1}$.
\end{lemma}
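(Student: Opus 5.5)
We follow the proof of Lemma~\ref{p_poincare_lem}, applied to $w=u\eta$ with $q=1$. We write $Q=B\times I$ with $B=B_\rho$ and $I=I^\la_\rho$, and use the cutoff $\varphi\in C_0^1(B)$ satisfying \eqref{gluing_eta}. As in \eqref{poincare_cal_1} and \eqref{weighted_poincare}, the spatial Poincar\'e inequality (with $q=1$) gives
\[
\fiint_Q\frac{|u\eta-(u\eta)_Q|}{\rho}\,dz\le c\fiint_Q|\na(u\eta)|\,dz+c\rho^{-1}\esssup_{t_1,t_2\in I}|(u\eta\varphi)_B(t_2)-(u\eta\varphi)_B(t_1)|.
\]
Since $u\eta$ vanishes outside $2Q_r$, we have $|\na(u\eta)|\le c(|u|/r+|\na u|)1_{2Q_r}$ by \eqref{localize_ftn}. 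This produces the first term on the right-hand side of \eqref{cut_off_p_poincare_ie}. The cylinder $Q$ need not lie in $\Om_T$. This is harmless because every integrand carries $\eta$, so all terms are supported in $2Q_r\Subset\Om_T$.

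For the time-oscillation term we test \eqref{p_weak_sol} with $e_i\,\eta\varphi\phi_\delta$, where $\phi_\delta$ is the time cutoff from the proof of Lemma~\ref{poincare_lem}. This function lies in $C^{0,1}_0(\Om_T)$ because $\eta$ has compact support in $2Q_r$, and we justify it as a test function by mollification, as in the paper. Letting $\delta\to0$, the left-hand side produces the difference $(u\eta\varphi)_B(t_2)-(u\eta\varphi)_B(t_1)$ multiplied by $|B|$. Compared with the earlier lemma there is one extra term, $\iint u\varphi\eta_t\phi_\delta$, because $\eta$ depends on time. The spatial gradient of $\eta\varphi$ is $\eta\na\varphi+\varphi\na\eta$, which is bounded by $c(\rho^{-1}+r^{-1})1_{2Q_r}$. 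By \eqref{p_ellipticity}, dividing by $\rho|B|$ as in \eqref{p_poincare_cal} and recalling $|Q|=c\rho^n\la^{2-p}\rho^2$, we get
\[
\rho^{-1}\esssup_{t_1,t_2}|\cdots|\le c\la^{2-p}\Bigl(1+\frac{\rho}{r}\Bigr)\fiint_Q(|\na u|+|F|)^{p-1}1_{2Q_r}\,dz+c\la^{2-p}\rho\fiint_Q|u||\eta_t|1_{2Q_r}\,dz.
\]

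The main obstacle is the $\eta_t$ term. By \eqref{localize_ftn} it is bounded by $c\la^{2-p}\frac{\rho}{r}\cdot r^{-1}\fiint_Q |u|\, r^{2}\,r^{-2}\cdots$. To fit it into the stated form, we use $r^{-2}$ scaling: $\rho\,|\eta_t|\le c\rho r^{-2}\cdot r^2$. Reading \eqref{localize_ftn} as normalising $\eta_t$ so that $\rho|\eta_t||u|\le c\frac{\rho}{r}\frac{|u|}{r}$ on $2Q_r$ (the natural parabolic normalisation of the time derivative), we then need to dominate $|u|/r$ by $(|u|/r+1)^{p-1}$. This holds because $p>2$, so $p-1>1$, and hence $s\le(s+1)^{p-1}$ for every $s\ge0$. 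This is precisely where the ``$+1$'' and the assumption $p>2$ enter. The same inequality also absorbs $(|\na u|+|F|)^{p-1}$ into $(|u|/r+|\na u|+|F|+1)^{p-1}$.

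Collecting the three contributions gives \eqref{cut_off_p_poincare_ie} with $c=c(n,p,L)$. The constant depends only on $n$, $p$, $L$ and the normalisation in \eqref{localize_ftn}. The step I would check most carefully is the exact power of $r$ in the $\eta_t$ bound, since it must produce the factor $\rho/r$ and not some other power. If the normalisation in \eqref{localize_ftn} gives only $|\eta_t|\le cr^{-2}$, I would compensate by using the $1_{2Q_r}$ support together with the bound $\rho\le c r$, or else accept a factor $(\rho/r+1)$, which the statement already allows.
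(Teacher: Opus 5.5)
Your proposal is correct and follows the paper's proof essentially step for step. Both arguments apply the $q=1$ spatial Poincar\'e and weighted-average argument of Lemma~\ref{p_poincare_lem} to $u\eta$, test \eqref{p_weak_sol} with $\eta\varphi\phi_\delta$, bound the extra $\eta_t$ and $\na\eta$ terms by \eqref{localize_ftn}, and absorb $|u|/r$ via $s\le(s+1)^{p-1}$ for $p>2$.

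Your closing worry is unfounded, and the garbled display before it should simply be deleted. With the intended normalisation $|\na\eta|\le c/r$, $|\eta_t|\le c/r^2$ (the powers $r^{-1},r^{-2}$ in \eqref{localize_ftn} are evidently typos for $r,r^2$, as the paper's own computation shows), the time-derivative term is directly $c\la^{2-p}\rho\fiint_Q|u||\eta_t|1_{2Q_r}\,dz\le c\la^{2-p}\frac{\rho}{r}\fiint_Q\frac{|u|}{r}1_{2Q_r}\,dz$. So no bound $\rho\le cr$ is needed. That is fortunate, since none holds for arbitrary $Q^\la_\rho$, and the case $\rho\ge 4r$ actually occurs in the later application in Lemma~\ref{p_point_wise}.
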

\begin{proof}
    For any $\varphi\in C_0^\infty(\mathbb{R}^{n+1};\mathbb{R}^N)$ we take $\eta\varphi$ as a test function in \eqref{p_weak_sol}. Then we have
    \begin{align*}
             &\iint_{\Om_T} (- u\eta \cdot \varphi_t +\mathcal{A}(z,\na u)\eta \cdot \na \varphi) \,dz \\
             &\qquad=\iint_{\Om_T}\bigl(u\eta_t\cdot \varphi -\mathcal{A}(z,\na u) \cdot (\na \eta )\varphi +|F|^{p-2}F \eta \cdot \na \varphi\\
             &\qquad\qquad+ |F|^{p-2}F\cdot (\na\eta) \varphi\bigr) \,dz.
    \end{align*}
    We denote $Q=B_\rho\times I^\la_\rho$.
    Let $\varphi\in C_0^1(B)$ satisfy \eqref{gluing_eta}. Then we have
    \begin{align*}
            &\esssup_{t_1,t_2 \in I^\la} |(u\eta)_B(t_1) - (u\eta)_B(t_2) |\le \frac{c}{\rho|B|} \iint_Q (|\na u|^{p-1}+|F|^{p-1})|\eta| \,dz\\
            &\qquad + \frac{c}{|B|}\iint_Q \bigl(|u||\eta_t|+(|\na u|^{p-1}+|F|^{p-1})|\na \eta|\bigr)\,dz,
    \end{align*}
    where $c=c(L)$. Using \eqref{localize_ftn}, we get
    \begin{align*}
            & \rho^{-1}\esssup_{t_1,t_2 \in I^\la} |  (u\eta)_B(t_1) - (u\eta)_B(t_2) |\le c\la^{2-p} \fiint_Q (|\na u|^{p-1}+|F|^{p-1})1_{2Q_r} \,dz\\
            &\qquad + \frac{c\rho}{r}\la^{2-p} \fiint_Q \biggl( \frac{|u|}{r}+|\na u|^{p-1}+|F|^{p-1}\biggr)1_{2Q_r} \,dz.
    \end{align*}
    As in the proof of Lemma~\ref{p_poincare_lem}, we obtain
    \begin{align*}
            &\fiint_{Q}\frac{|u\eta- (u\eta)_Q|}{\rho}\,dz\le c\fiint_Q |\na (u\eta)|\,dz  \\
            &\qquad + c\Bigl( \frac{\rho}{r}+1 \Bigr) \la^{2-p} \fiint_Q \biggl( \frac{|u|}{r}+|\na u|^{p-1}+|F|^{p-1}\biggr)1_{2Q_r} \,dz,
    \end{align*}
    where $c=c(n,L)$. We use \eqref{localize_ftn} and Young's inequality to get
    \begin{align*}
            &\fiint_{Q}\frac{|u\eta- (u\eta)_Q|}{\rho}\,dz\le c\fiint_Q \biggl( \frac{|u|}{r}+|\na u| \biggr) 1_{2Q_r}\,dz\\
            &\qquad + c\Bigl( \frac{\rho}{r}+1 \Bigr)\la^{2-p} \fiint_Q \biggl( \frac{|u|}{r}+|\na u|+|F|+1\biggr)^{p-1} 1_{2Q_r} \,dz,
    \end{align*}
    where $c=c(n,p,L)$. This completes the proof.
\end{proof}

Next, we present pointwise estimates in the time direction. For a cylinder $2Q=2Q_r\Subset\Om_T$, we denote
\begin{equation}\label{p_time_grad}
    g(x,t)=
    \begin{cases}
       \bigl(M^*((|\na u|+|F|) 1_{2Q})(x,t)+1\bigr)^\frac{p}{2},&   1<p<2,\\
         \Bigl( M^*\Bigl(  \Bigl( \tfrac{|u|}{r}  + |\na u|+|F| \Bigr)^{p-1} 1_{2Q}\Bigr)(x,t)  +1\Bigr)^\frac{p}{2(p-1)},  &2<p<\infty.
    \end{cases}
\end{equation}

\begin{lemma}\label{p_point_wise}
    Let $1< p<\infty$ and $q=\max\{1,p-1\}$.
    Assume that $|F|\in L^{q}_{\loc}(\Omega_T)$ and that $u\in  L^q_{\loc}(0,T;W^{1,q}_{\loc} (\Om;\mathbb{R}^N))$ satisfies \eqref{p_weak_sol}
 for every $\varphi\in C_0^\infty(\Om_T;\mathbb{R}^N)$.
Then there exists a constant $c=c(n,p,L)$ such that 
\begin{equation}\label{p_time_der}
    \frac{|u(x,t)-u(x,s)|}{|t-s|^\frac{1}{2}}
    \le c \bigl( g(x,t) + g(x,s) \bigr)
\end{equation}
for every parabolic cylinder $Q=Q_r=B\times I$ such that $2Q\Subset \Om_T$ and almost every $x\in B$ and $t,s\in I$, where $g$ is as in \eqref{p_time_grad}.
\end{lemma}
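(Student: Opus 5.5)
The plan is to repeat the argument of Lemma~\ref{point_wise}, but with the spatial radius chosen intrinsically, that is, depending on the size of the maximal function at the two points. Fix $x\in B$ and $t,s\in I$ with $t<s$, and let $0<\rho$ be a radius to be chosen. As in \eqref{average_cal}, split
\[
|u(x,t)-u(x,s)|\le |u(x,t)-u_{B_\rho(x)}(t)|+|u(x,s)-u_{B_\rho(x)}(s)|+|u_{B_\rho(x)}(t)-u_{B_\rho(x)}(s)|.
\]
The first two terms are bounded by $c\rho\, M^*(|\na u|1_{2Q})$ at $(x,t)$ and at $(x,s)$, using \cite[Theorem~3.5]{KLV} exactly as in \eqref{average_cal_2}. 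For the third term, I insert $(u\varphi)_{B_\rho(x)}$ with $\varphi$ as in \eqref{gluing_eta}. The two correction terms cost $c\rho\fint_{B_\rho(x)}|\na u|\,dy$ at times $t$ and $s$, as in \eqref{average_cal_3}. The remaining difference is controlled through the test function $\varphi\phi_\delta$ in \eqref{p_weak_sol}, as in \eqref{p_poincare_cal}:
\[
|(u\varphi)_{B_\rho}(t)-(u\varphi)_{B_\rho}(s)|\le \frac{c}{\rho|B_\rho|}\iint_{B_\rho(x)\times(t,s)}(|\na u|+|F|)^{p-1}\,dz = c\,\frac{|t-s|}{\rho}\fiint_{B_\rho(x)\times(t,s)}(|\na u|+|F|)^{p-1}\,dz .
\]
Let $A$ denote the maximum of the relevant maximal-function quantity at $(x,t)$ and at $(x,s)$, augmented by $+1$. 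Then, in all cases,
\[
|u(x,t)-u(x,s)|\le c\bigl(\rho A+|t-s|\rho^{-1}A^{p-1}\bigr).
\]
For $1<p<2$ the bound $\fiint(\cdot)^{p-1}\le (M^*(\cdot))^{p-1}$ holds by Jensen's inequality. For $p>2$, take $A=\bigl(M^*((\cdot)^{p-1}1_{2Q})+1\bigr)^{1/(p-1)}$, so that $M^*(|\na u|1_{2Q})\le A$, again by Jensen.

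Next I optimize in $\rho$ by choosing $\rho^2=|t-s|A^{p-2}$. This gives $|u(x,t)-u(x,s)|\le c|t-s|^{1/2}A^{p/2}$, and $A^{p/2}$ is comparable to $g(x,t)+g(x,s)$ for the $g$ in \eqref{p_time_grad}; this is exactly where the exponents $\tfrac p2$ and $\tfrac{p}{2(p-1)}$ come from. The choice is the intrinsic scaling $\la=A$, since then $B_\rho(x)\times(t,s)$ is an intrinsic cylinder $Q^{A}_\rho$. One must also ensure that $B_\rho(x)\times(t,s)\subset 2Q$, so that the indicator $1_{2Q}$ does not lose information. For $1<p<2$ this is easy: $A\ge1$ gives $\rho\le|t-s|^{1/2}\le \sqrt2\, r$, and a harmless constant factor in the choice of $\rho$ (or working with $\tfrac12$ of it) keeps the cylinder inside $2Q$.

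The main obstacle is the case $p>2$. There $\rho=|t-s|^{1/2}A^{(p-2)/2}$ may be much larger than $r$, so the ball leaves $2Q$ and the equation cannot be tested there. To handle this I would localize: take $\eta\in C_0^1(2Q)$ with $\eta=1$ on $Q$ satisfying \eqref{localize_ftn}, and run the argument for $u\eta$ instead of $u$. Since $\eta(x,\cdot)=1$ for $x\in B$, $t,s\in I$, we have $u\eta(x,t)-u\eta(x,s)=u(x,t)-u(x,s)$. The time-difference step is then taken from the proof of Lemma~\ref{cut_off_p_poincare_lem}. It produces the extra terms $|u|/r$ and the factor $(\rho/r+1)$, together with integrands $\bigl(\tfrac{|u|}{r}+|\na u|+|F|+1\bigr)^{p-1}1_{2Q}$ over arbitrarily large cylinders. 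All of these are dominated by $M^*\bigl((\tfrac{|u|}{r}+|\na u|+|F|)^{p-1}1_{2Q}\bigr)+1$, which explains the form of $g$. The spatial terms use $|\na(u\eta)|\le c(|u|/r+|\na u|)1_{2Q}$.

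The delicate bookkeeping is the factor $\rho/r$ when $\rho>r$. I would try to absorb it by noting that the time-derivative contribution of $\eta$ is $\eta_t\sim r^{-2}$ over a time interval of length $\le |t-s|\lesssim r^2$, so the product stays bounded. If that fails, I would instead cap the radius at $\rho=\min\{|t-s|^{1/2}A^{(p-2)/2},r\}$ and check that the case $\rho=r$ still yields $c|t-s|^{1/2}A^{p/2}$, using $r\le |t-s|^{1/2}A^{(p-2)/2}$.
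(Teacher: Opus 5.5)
Your argument for $1<p<2$ is fine and is essentially the paper's Case~1; the paper rescales to $v$ via Lemma~\ref{p_scal_inv}, but the estimates are the same. The gap is in the case $p>2$, at exactly the point you flag, and neither of your two fallback plans closes it. Testing with $\eta\varphi\phi_\delta$, where $\varphi$ is the bump on $B_\rho(x)$, produces the harmless $\rho^{-1}$ term and, in addition, the contributions
\[
\frac{c}{|B_\rho|}\iint_{B_\rho(x)\times(t,s)}\Bigl(\frac{|u|}{r^2}+\frac{(|\na u|+|F|)^{p-1}}{r}\Bigr)1_{2Q}\,dz .
\]
Your first idea ($|\eta_t|\lesssim r^{-2}$ acting over a time interval of length $\lesssim r^2$) handles the $|u|/r^2$ part, but not the $|\na\eta|$ part. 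Bounding that average by $M^*$ gives $c|t-s|r^{-1}A^{p-1}$. This exceeds the target $|t-s|\rho^{-1}A^{p-1}\approx|t-s|^{1/2}A^{p/2}$ by the unbounded factor $\rho/r$. Using $|t-s|\lesssim r^2$ only turns it into $c\,rA^{p-1}$, which is not $\lesssim\rho A$ for large $A$ (take $|t-s|\approx r^2$). Your second idea fails outright. With the radius capped at $r$, the bump has gradient of order $r^{-1}$, and the time-difference bound is again $c|t-s|r^{-1}A^{p-1}$. The inequality $r\le|t-s|^{1/2}A^{(p-2)/2}$ that you intend to use shows that this bound is \emph{at least} $|t-s|^{1/2}A^{p/2}$, so it goes the wrong way.

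The missing ingredient is a dilution estimate that exploits the indicator $1_{2Q}$, rather than just bounding averages by $M^*$. If $\rho\ge 3r$, then $B_{2r}(x_0)\subset B_\rho(x)$, and since the integrand vanishes outside $2Q$,
\[
\fiint_{B_\rho(x)\times(t,s)}h\,1_{2Q}\,dz=\Bigl(\frac{2r}{\rho}\Bigr)^n\fiint_{B_{2r}(x_0)\times(t,s)}h\,1_{2Q}\,dz\le\Bigl(\frac{2r}{\rho}\Bigr)^n M^*(h1_{2Q})(x,t).
\]
The last step is where the strong maximal function is genuinely needed, because $B_{2r}(x_0)\times(t,s)$ is not a cylinder of the intrinsic shape. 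Since $n\ge1$, the factor $(2r/\rho)^n\le 2r/\rho$ cancels $\rho/r$, and all terms are bounded by $c(|t-s|\rho^{-1}A^{p-1}+\rho A)\approx c|t-s|^{1/2}A^{p/2}$. For $r<\rho<3r$ the factor $\rho/r$ is bounded anyway. This is precisely the case $2^{-i}\la^{\frac{p-2}{2}}\varrho\ge 4r$ in the paper's Case~2. With this estimate inserted, your single-ball route goes through: the spatial estimate for $u\eta$ via \cite[Theorem~3.5]{KLV}, plus one time difference over $(t,s)$. It is then a valid alternative to the paper's telescoping argument over a chain of shrinking intrinsic cylinders, each estimated by Lemma~\ref{cut_off_p_poincare_lem}.
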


\begin{proof}
    We divide the proof into two cases.
    
      \textit{Case~1}: $1<p<2$. 
    We set $\varrho=(|t-s|/2)^\frac{1}{2}$, $\rho=\la^\frac{p-2}{2}\varrho$ and
     \[
             \la=
      M^*((|\na u|+|F|) 1_{2Q})(x,t)+M^*((|\na u|+|F|) 1_{2Q})(x,s)+1 .
     \]
     Then we have $\Qla=B_\rho(x)\times I_\rho^\la(\tfrac{t+s}{2}) \subset Q_\varrho$. Note that $I_\rho^\la(\tfrac{t+s}{2})=I_\varrho(\tfrac{t+s}{2})$.
     We apply the scaling invariance property in Lemma~\ref{p_scal_inv} with the cylinder $2\Qla$. Then we follow the proof of Theorem~\ref{time_der} replacing $u$ by $v$ defined in \eqref{p_scale}. For any $x\in B_1$ and $t,s\in I_1$, as in \eqref{average_cal}, \eqref{average_cal_2} and \eqref{average_cal_3}, we have
\begin{align*}
        |v(x,t)-v(x,s)| 
        &\le  |v(x,t) -v_{B_1}(t)|
        + |v(x,s) -v_{B_1}(s)|\\
        &\qquad+ |v_{B_1}(t) -v_{B_1}(s)|\\
        &\le c \bigl(M^*(|\na v|1_{Q_2})(x,t)+M^*(|\na v|1_{Q_2})(x,s)\bigr)\\
        &\qquad+ |(v\varphi)_{B_1}(t) -(v\varphi)_{B_1}(s)|,
\end{align*}
where $c=c(n)$ and $\varphi\in C_0^\infty(B_1)$ satisfies \eqref{gluing_eta}. To estimate the last term on the right-hand side, we use \eqref{p_poincare_cal}. Then we get
\[
|(v\varphi)_{B_1}(t) -(v\varphi)_{B_1}(s)|\le \frac{c}{|B_1|}\iint_{Q_1}   (|\na v|^{p-1}+|G|^{p-1}) \,dz,
\]
where $c=c(L)$.
Since $p-1<1$, by Young's inequality we have
\begin{align*}
         |(v\varphi)_{B_1}(t) -(v\varphi)_{B_1}(s)|
         &\le \frac{c}{|B_1|}\iint_{Q_1}   (|\na v|+|G|+1) \,dz \\
        &\le c( M^*((|\na v| + |G| )1_{Q_2})(x,t) +1),
\end{align*}
where $c=c(p,L)$. Scaling back, we obtain
\begin{align*}
        \frac{|u(x,t)-u(x,s)|}{\la^\frac{p}{2}|t-s|^\frac{1}{2}}
        &=\frac{|u(x,t)-u(x,s)|}{\la\rho}\\
     &\le  c \la^{-1}\bigl( M^*((|\na u| + |F| )1_{2Q})(x,t)\\
     &\qquad + M^*((|\na u| + |F| )1_{2Q})(x,s) +1\bigr).
\end{align*}
     Inserting $\la$, the above inequality becomes
     \begin{align*}
    \frac{|u(x,t)-u(x,s)|}{|t-s|^\frac{1}{2}}
    &\le c \bigl( M^*((|\na u|+|F|) 1_{2Q})(x,t)\\
    &\qquad+M^*((|\na u|+|F|) 1_{2Q})(x,s)+1\bigr)^\frac{p}{2}.
     \end{align*}
     This completes the proof for the case $1<p<2$.

     \textit{Case~2}: $2< p<\infty$. We set 
     $\varrho=(|t-s|/2)^\frac{1}{2}$, $\rho=\la^\frac{p-2}{2}\varrho$ and
     \begin{equation*}
         \begin{split}
        &\la
        =\biggl( M^*\biggl(  \biggl( \frac{|u|}{r}  + |\na u|+|F| \biggr)^{p-1} 1_{2Q}\biggr)(x,t)  \\
      &\qquad + M^*\biggl(  \biggl( \frac{|u|}{r}  + |\na u|+|F| \biggr)^{p-1} 1_{2Q}\biggr)(x,s)+1\biggr)^\frac{1}{p-1} .
         \end{split}
     \end{equation*}
     We denote $Q_0=Q_\rho^\la= B_\rho(x)\times I_{\varrho}(\tfrac{t+s}{2})$ and construct a sequence of intrinsic cylinders $\{Q_i\}_{i=1}^\infty$ such that $(x,t)\in Q_i$, $Q_{i}\subset Q_{i-1}$ and $|Q_{i}|=|\tfrac{1}{2}Q_{i-1}|$ for every $i=1,2,\dots$ with the similarity figure of $Q_0$ and the center points lie in $\{x\}\times \mathbb{R}$. Similarly, we construct $\{Q_j\}_{j=1}^\infty$ only by replacing $(x,s)\in Q_j$ for every $j=1,2,\dots$.
     However, $Q_0\not\subset 2Q$ may occur and we will localize by taking a cutoff function $\eta\in C_0^1(2Q)$ such that $\eta\equiv1$ in $Q$ and that \eqref{localize_ftn} holds.

     We note that
     \begin{align*}
             |u(x,t)-u(x,s)|
             &=|u\eta(x,t)-u\eta(x,s)|\\
             &\le |u\eta(x,t)- (u\eta)_{Q_0}|+|u\eta(x,s)-(u\eta)_{Q_0}|\\
             &\le \sum_{i=1}^{\infty} |(u\eta)_{Q_{i-1}} - (u\eta)_{Q_{i}} | + \sum_{j= 1}^\infty |(u\eta)_{Q_{j-1}} -(u\eta)_{Q_{j}} |,
     \end{align*}
        where
         \begin{align*}
         |(u\eta)_{Q_{i-1}} -(u\eta)_{Q_{i}} |
         &\le \fiint_{Q_{i-1}} | u\eta -  (u\eta)_{Q_{i}} |\,dz\\
         &\le 2^{n+2}\fiint_{Q_{i}} | u\eta -  (u\eta)_{Q_{i}} |\,dz.
         \end{align*}
         Employing \eqref{cut_off_p_poincare_ie}, we get
        \begin{align*}
            &\fiint_{Q_{i}} |u\eta- (u\eta)_{Q_{i}}| \,dz
            \le c 2^{-i}\la^\frac{p-2}{2}\varrho \fiint_{Q_{i}} \biggl( \frac{|u|}{r}+|\na u| \biggr) 1_{2Q}\,dz\\
            &\qquad + c2^{-i}\la^\frac{p-2}{2}\varrho \Bigl( \frac{2^{-i}\la^\frac{p-2}{2}\varrho}{r}+1 \Bigr)\la^{2-p}\\
            &\qquad\qquad\times\fiint_{Q_{i}} \biggl( \frac{|u|}{r}+|\na u|+|F|+1\biggr)^{p-1} 1_{2Q} \,dz,
    \end{align*}
    where $c=c(n,p,L)$.
      Since $(x,t)\in Q_i^\la$, we have
      \[
      \fiint_{Q_{i}} \biggl( \frac{|u|}{r}+|\na u| \biggr) 1_{2Q}\,dz\le M^*\biggl( \biggl( \frac{|u|}{r}+|\na u| \biggr)^{p-1} 1_{2Q} \biggr)^\frac{1}{p-1}  (x,t)\le \la.
      \]
      For the remaining term we consider two cases. If $2^{-i}\la^\frac{p-2}{2}\varrho\le 4r$, then we again have
      \[
      \biggl( \frac{2^{-i}\la^\frac{p-2}{2}\varrho}{r}+1 \biggr)\la^{2-p} \fiint_{Q_{i}} \biggl( \frac{|u|}{r}+|\na u|+|F|\biggr)^{p-1} 1_{2Q} \,dz\le 5\la.
      \]
      On the other hand, if $2^{-i}\la^\frac{p-2}{2}\varrho\ge 4r$, then $ B_{2r}\times I_i \subset Q_i=B_i\times I_i$ and
      \begin{align*}
              &\biggl( \frac{2^{-i}\la^\frac{p-2}{2}\varrho}{r}+1 \biggr)\la^{2-p} \fiint_{Q_{i}} \biggl( \frac{|u|}{r}+|\na u|+|F|\biggr)^{p-1} 1_{2Q} \,dz\\
              &\qquad=\biggl( \frac{2^{-i}\la^\frac{p-2}{2}\varrho}{r}+1 \biggr)\la^{2-p} \left( \frac{2r}{2^{-i}\la^\frac{p-2}{2}\varrho} \right)^n \\
              &\qquad\qquad\times\fiint_{B_{2r}\times I_i }  \biggl( \frac{|u|}{r}+|\na u|+|F|\biggr)^{p-1} 1_{2Q} \,dz\\
              &\qquad\le 2\la^{2-p}\fiint_{B_{2r}\times I_i }  \biggl( \frac{|u|}{r}+|\na u|+|F|\biggr)^{p-1} 1_{2Q} \,dz
              \le 2\la.
      \end{align*}
      Combining the estimates above, it follows that 
      \[
      |(u\eta)_{Q^\la_{i-1}} -(u\eta)_{Q^\la_{i}} |\le c 2^{-i}\la^\frac{p}{2}\varrho.
      \]
      The same calculation works for $j=1,2,\dots$. Therefore, we obtain 
      \[
      |u(x,t)-u(x,s)|\le c\la^\frac{p}{2}\varrho.
      \]
        The proof is completed by inserting $\la$ and $\varrho$.
\end{proof}

\begin{remark}
    We remark that for $1<p<2$, if  $u\in L^p(0,T;W^{1,p}(\Om;\mathbb R^N))$ is a weak solution and there exist a point $t_0\in (0,T)$ and a ball $B\Subset\Om$ such that $|u(\cdot,t_0)|\in L^2(B)$, then \eqref{p_time_der} implies $ |u| \in L^2(B\times (0,T))$. Indeed, adding and subtracting $u(x,t_0)$ gives
    \begin{align*}
            &\iint_{B\times (0,T)} |u(x,t)|^2\,dx\,dt \\
            &\qquad\le c\iint_{B\times (0,T)} \bigl(M^* (( |\na u|+|F| )1_{B\times (0,T)}) (x,t) \bigr)^p \,dx\,dt \\
            &\qquad\qquad + c\int_{B}  \bigl( |u(x,t_0)|^2 + (M^* (( |\na u|+|F| )1_{B\times (t_0-\epsilon,T)}) (x,t_0) )^p \bigr)  \,dx.
    \end{align*}
    The integrability in the spatial direction at a certain time affects the integrability in the space-time direction. This also implies that  weak solutions to \eqref{p_eq} belong to $L^2_{\loc}(\Omega_T;\mathbb R^N)$, if $\tfrac{2n}{n+2}\le p<2$.
\end{remark}

\begin{remark}
    Note that the proof for the case $1<p<2$ can be applicable when $p\ge2$ provided $\Om=\mathbb{R}^n$. In that case \eqref{p_time_der} holds with 
    \[
    g(x,t)=\bigl(M^*((|\na u|+|F|)^{p-1} 1_{2Q})(x,t)+1\bigr)^\frac{p}{2(p-1)}.
    \]
    Moreover, if $Q$ is an intrinsic parabolic cylinder with radius less than $1$  and the upper bound in \eqref{p_main_left_stopping} holds, then the proof of \eqref{p_poincare_ieq} can be adopted and
    \[
    g(x,t)=\bigl(M^*((|\na u|+|F|)^{p-1} 1_{2Q})(x,t)+1\bigr)^\frac{p}{2(p-1)}
    \]
    can be used.
\end{remark}

\begin{lemma}
    Let $2< p<\infty$.
    Assume that $|F|\in L^{p-1}_{\loc}(\Omega_T)$ and that $u\in  L^{p-1}_{\loc}(0,T;W^{1,p-1}_{\loc} (\Om;\mathbb{R}^N))$ satisfies \eqref{p_weak_sol}
 for every $\varphi\in C_0^\infty(\Om_T;\mathbb{R}^N)$. Moreover, assume that there exist a parabolic cylinder $2Q=2\Qla=B_{2\rho}\times I_{2\rho}^\la \Subset\Omega_T$ and a constant $\gamma>1$ such that
\[
   \gamma^{-1}\la\le \biggl(\fiint_{2Q} (|\na u|+|F|)^{p-1}\, dz\biggr)^\frac{1}{p-1}\le \la.
\]
Then there exists a constant $c=c(n,p,L,\gamma)$ such that 
\begin{equation*}
    \begin{split}
        \frac{|u(x,t)-u(x,s)|}{|t-s|^\frac{1}{2}}
    &\le c  ( M^*((|\na u|+|F|)^{p-1} 1_{2Q})(x,t) \\
    &\qquad + M^*((|\na u|+|F|)^{p-1} 1_{2Q})(x,s)+1  )^\frac{p}{2(p-1)}
    \end{split}
\end{equation*}
for almost every $x\in B_\rho$ and $t,s\in I_\rho^\la$.
\end{lemma}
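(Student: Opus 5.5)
The plan is to adapt Case~2 of the proof of Lemma~\ref{p_point_wise}. The difference is that no cutoff $\eta$ is used, so the term $|u|/r$ disappears. Instead, the stopping condition on $2Q$ controls the large cylinders through a mechanism like the one in the proof of \eqref{p_poincare_ieq}. Fix $x\in B_\rho$ and $t,s\in I^\la_\rho$. Set $\varrho=(|t-s|/2)^{1/2}$, and define the pointwise scaling factor
\[
\mu=\bigl(M^*((|\na u|+|F|)^{p-1}1_{2Q})(x,t)+M^*((|\na u|+|F|)^{p-1}1_{2Q})(x,s)+1\bigr)^{\frac{1}{p-1}}.
\]
Also set $\sigma=\mu^{\frac{p-2}{2}}\varrho$, so that the intrinsic cylinder $Q_0=Q^\mu_\sigma$ centred at $(x,\tfrac{t+s}{2})$ has time interval $I_\varrho(\tfrac{t+s}{2})$ and contains both $(x,t)$ and $(x,s)$. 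As in Lemma~\ref{p_point_wise}, I build two chains $Q_i\ni(x,t)$ and $Q_j\ni(x,s)$ of $\mu$-intrinsic cylinders with radii $2^{-i}\sigma$, centred on $\{x\}\times\mathbb R$ and nested. Telescoping gives
\[
|u(x,t)-u(x,s)|\le\sum_i 2^{n+2}\fiint_{Q_i}|u-u_{Q_i}|\,dz+\sum_j(\cdots).
\]

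Each term is estimated with Lemma~\ref{p_poincare_lem} for $q=1$ (more precisely, its first-order version with exponent $p-1\ge1$), applied on cylinders inside $2Q$. For a cylinder $Q_i\subset 2Q$ of radius $\rho_i=2^{-i}\sigma$, the lemma and the definition of $\mu$ together with H\"older's inequality give
\[
\fiint_{Q_i}|u-u_{Q_i}|\,dz\le c\rho_i\fiint_{Q_i}|\na u|\,dz+c\rho_i\mu^{2-p}\fiint_{Q_i}(|\na u|+|F|)^{p-1}\,dz\le c\rho_i\mu.
\]
The reason is that $(x,t)\in Q_i$, so the averages are bounded by $M^*$ at $(x,t)$. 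Summing the geometric series gives $c\sigma\mu=c\mu^{p/2}\varrho$, which is the claimed bound with exponent $\tfrac{p}{2(p-1)}$ on the maximal function.

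The main obstacle is that $Q_0$ need not lie in $2Q$. Since $p>2$ and $\mu\ge1$, the intrinsic time interval is no larger than $I_\varrho$, so $Q_0$ stays in $2Q$ in time, because $t,s\in I^\la_\rho$. The spatial radius $\sigma$ may nevertheless exceed $\rho$. Here the hypothesis is used, as follows.

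\emph{Case $\mu\le\la$.} The time interval of $Q_0$ is $\mu^{2-p}\sigma^2=\varrho^2$. Since $t,s\in I^\la_\rho$, we have $\varrho^2\le\la^{2-p}\rho^2$, and therefore $\sigma=\mu^{\frac{p-2}{2}}\varrho\le(\mu/\la)^{\frac{p-2}{2}}\rho\le\rho$. Hence $Q_0\subset2Q$.

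\emph{Case $\mu>\la$.} I replace the top of the chain. I start from the cylinder $2Q$ itself, which is $\la$-intrinsic, and insert $\la$-intrinsic cylinders $Q^\la_{2^{-k}\rho}$ containing $(x,t)$ until the time scale reaches $\varrho^2$. Then I switch to $\mu$-intrinsic cylinders, exactly as in the two-type construction in Case~1 of the proof of \eqref{p_poincare_ieq}. On the $\la$-cylinders, Lemma~\ref{p_poincare_lem} and the upper bound in the hypothesis give $\fiint|u-u_{Q}|\le c\,2^{-k}\rho\la$, but only after the averages over sub-cylinders are controlled by $M^*$. Both chains share the cylinder $2Q$, so the difference $u(x,t)-u(x,s)$ only involves the two descending chains. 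I would compare $\rho\la$ with $\mu^{p/2}\varrho$, using $\varrho$ as the switching scale, so that the $\la$-stage contributes at most $c\la^{p/2}\varrho\le c\mu^{p/2}\varrho$.

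The delicate point is to check that the number of $\la$-steps and the resulting sum are indeed bounded by $\la^{p/2}\varrho$ rather than by $\la\rho$. I would first try stopping the $\la$-chain at spatial radius $\la^{\frac{p-2}{2}}\varrho\le\rho$, where the geometric sum is comparable to its last term. The lower bound $\gamma^{-1}\la$ enters only to absorb the additive $+1$ and the constants, which makes $c$ depend on $\gamma$.
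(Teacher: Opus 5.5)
\textbf{There is a genuine gap in the case $\mu>\la$, and that is the only case with real content.} Your case $\mu\le\la$ is correct. However, since $(x,t)\in 2Q$, the lower bound gives $\la\le\gamma\bigl(\fiint_{2Q}g^{p-1}\,dz\bigr)^{\frac{1}{p-1}}\le\gamma\mu$, where $g=|\na u|+|F|$, so that case only covers $\la/\gamma\le\mu\le\la$.

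Several steps in the case $\mu>\la$ fail:
\begin{itemize}
\item The per-step bound $\fiint|u-u_{Q}|\,dz\le c2^{-k}\rho\la$ on the cylinders $Q^\la_{2^{-k}\rho}$ is not justified. The hypothesis controls $\fiint g^{p-1}$ only on $2Q$ itself; on $Q^\la_{2^{-k}\rho}$ it gives only $c2^{k(n+2)}\la^{p-1}$. The maximal function $M^*$ at $(x,t)$ gives only $\mu^{p-1}$, so the time term of Lemma~\ref{p_poincare_lem} is $c2^{-k}\rho\la^{2-p}\mu^{p-1}$. Summed over $k$, this is $c\rho\la^{2-p}\mu^{p-1}$. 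Since $\varrho<\la^{\frac{2-p}{2}}\rho$, this exceeds $\mu^{p/2}\varrho$ by more than the factor $(\mu/\la)^{\frac{p-2}{2}}$.
\item The switch to $\mu$-intrinsic cylinders has the same defect. In Case~1 of the proof of \eqref{p_poincare_ieq}, the time-shrinking first sum is controlled only through the normalization $(g^q)_{Q_1}\le 1$ on the top cylinder. Your switching cylinder of radius $\la^{\frac{p-2}{2}}\varrho$ has no such normalization, and bounding its averages by $M^*$ again loses a power of $\mu/\la$.
\item A decreasing geometric sum is comparable to its first term, not its last. So even granting the per-step bound, the $\la$-stage would cost $\rho\la$, not $\la^{p/2}\varrho$.
\item The lower bound is not needed for the $+1$, which is already built into $\mu$. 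In the paper it supplies $\la\le\gamma\mu$, used at the end to turn $\la^{\frac{p-2}{2}}$ into $\mu^{\frac{p-2}{2}}$.
\end{itemize}
The paper never passes through $2Q$. It rescales (Lemma~\ref{p_scal_inv}) the $\la$-intrinsic cylinder of time scale $|t-s|$ centred at $(x,\frac{t+s}{2})$. It then runs the two-type chain of \eqref{p_poincare_ieq} from there, with $\La$ given by the rescaled maximal function at the two points, and scales back.

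\textbf{Missing idea: keep the spatial radius comparable to $\rho$ while passing from the $\la$-geometry to the $\mu$-geometry.} This is what makes the upper bound on $2Q$ usable. Split according to whether $\sigma\le\rho$ or $\sigma>\rho$, not according to $\mu\le\la$.

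If $\sigma\le\rho$, your direct $\mu$-chain from $B_\sigma(x)\times J_0$, with $J_0=(\min\{t,s\},\max\{t,s\})$, lies in $2Q$ and gives the claim.

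If $\sigma>\rho$, this forces $\mu>\la$, because $\la^{\frac{p-2}{2}}\varrho<\rho$. Take the common top cylinder $B_\rho(x)\times J_0\subset 2Q$. Halve only its time interval, keeping the radius $\rho$, about $2\log_2(\sigma/\rho)$ times, until $B_\rho(x)\times J$ is $\mu$-intrinsic. The proof of Lemma~\ref{p_poincare_lem} gives
\[
\fiint_{B_\rho(x)\times J}|u-u_{B_\rho(x)\times J}|\,dz\le c\rho\fiint_{B_\rho(x)\times J}g\,dz+c\frac{|J|}{\rho}\fiint_{B_\rho(x)\times J}g^{p-1}\,dz .
\]
The first term is at most $c\rho\mu$ by $M^*$. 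The hypothesis on $2Q$ gives $\fiint_{B_\rho(x)\times J}g^{p-1}\,dz\le c\la\rho^2/|J|$, so the second term is at most $c\rho\la\le c\rho\mu$.

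Hence this stage costs $c\rho\mu\bigl(1+\log(\sigma/\rho)\bigr)\le c\mu\sigma$. The remaining $\mu$-chain costs $c\rho\mu$. Therefore $|u(x,t)-u(x,s)|\le c\mu\sigma=c\mu^{p/2}\varrho$, and only the upper bound of the hypothesis is used. The first-sum estimate of \eqref{p_poincare_ieq} invoked in the paper's sketch also rests on such a normalization of the top cylinder. With spatial radius $\rho$, that normalization is supplied directly by the hypothesis.
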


\begin{proof}
    Let $x\in B_\rho$ and $t,s\in I_\rho^\la$. Let $0<\varrho<\rho$ such that 
    \[
    |t-s|=2\la^{2-p}\varrho^2=\left|I_\varrho^\la\left(\tfrac{t+s}{2}\right)\right|.
    \]
    By considering $B_{2\varrho}\times I_{2\varrho}^\la(\tfrac{t+s}{2})$, we apply a scaling argument and follow the argument in the proof of \eqref{p_poincare_ieq}. 
    We denote by $y,\tau,\varsigma$ the points obtained by scaling $x,t,s$, respectively.
    Let
    \begin{align*}
    \La &= 2\cs \bigl(M^*((|\na v|+|G|)^{p-1} 1_{2Q_1})(y,\tau)\\
    &\qquad+ M^*((|\na v|+|G|)^{p-1} 1_{2Q_1})(y,\varsigma)+1\bigr)^\frac{1}{p-1},
    \end{align*}
    where $2\cs$ is defined in \eqref{p_poincare_ieq_lv}.
    We construct a sequence of cylinders $\{ Q_i=Q_{1}^{2^i}(z_i) \}_{i=2}^{ k } $ and $\{Q_i=Q_{\rho_i}^{\La/\rho_i} \}_{i=k+1}^\infty$ such that
\[
 \rho_i=2^{-i+k}, \quad Q_{i+1}\subset Q_i \quad\text{and}\quad \lim_{i\to\infty} v_{Q_i}=v(y,\tau).
\]
Then we have
\begin{align*}
    &|v(y,\tau) - v_{Q_1}|
    \le  c \sum_{i=1}^k \biggl( \biggl(\fiint_{Q_i} g^{p-1}\,dz \biggr)^\frac{1}{p-1}+ 2^{i(2-p)}\fiint_{Q_i} g^{p-1}  \,dz  \biggr) \\
    &\qquad + c \sum_{i=k+1}^\infty \rho_i \biggl( \biggl(\fiint_{Q_i} g^{p-1}\,dz \biggr)^\frac{1}{p-1}+ \La^{2-p}\rho_i^{p-2} \fiint_{Q_i} g^{p-1}  \,dz  \biggr),
\end{align*}
where $c=c(n,p,L)$ and $g=|\na v|+|G|$. We estimate the first sum by $\La$ as in the proof of  \eqref{p_poincare_ieq}, while for the second summation we take the strong maximal operator to present each integral average with $M^*(g^{p-1})(y,\tau)\le \La^{p-1}$. This implies that $|v(y,\tau) - v_{Q_1}|\le c\La$.
The same calculation works for $|v(y,\varsigma) - v_{Q_1}|$. Therefore, by scaling back, we get
\begin{align*}
 \frac{|u(x,t)-u(x,s)|}{\la\varrho}
 &=|v(y,\tau)-v(y,\varsigma)|\\
    &\le \frac{c}{\la} \bigl(M^*((|\na u|+|F|)^{p-1} 1_{2Q_1})(x,t)\\
    &\qquad+ M^*((|\na u|+|F|)^{p-1} 1_{2Q_1})(x,s)+1\bigr)^\frac{1}{p-1}.
\end{align*}
Since $\la^{2-p}\varrho^2=|t-s|$ and for any $z\in 2Q$ we have
\begin{align*}
 \gamma^{-1}
 &\la\le \left(\fiint_{2Q} (|\na u|+|F|)^{p-1}\, dz\right)^\frac{1}{p-1}\\
 &\le M^*( (|\na u|+|F|)^{p-1} 1_{2Q} )^\frac{1}{p-1} (z),
\end{align*}
it follows that
\begin{align*}
    \frac{|u(x,t)-u(x,s)|}{|t-s|^\frac{1}{2}}
    &=\frac{|u(x,t)-u(x,s)|}{\la^\frac{2-p}{2}\varrho} \\
    &\le c\bigl(M^*((|\na u|+|F|)^{p-1} 1_{2Q_1})(x,t)\\
    &\qquad  + M^*((|\na u|+|F|)^{p-1} 1_{2Q_1})(x,s)+1\bigr)^\frac{p}{2(p-1)}.
\end{align*}
This completes the proof.
\end{proof}

Next, we provide a Poincar\'e inequality in the time direction.

\begin{theorem}\label{p_time_poincare}
    Let $1< p <\infty$ and $\max\{1,p-1\}<q<\infty$.
    Assume that $|F|\in L^{\frac{pq}{2}}_{\loc}(\Omega_T)$ and that $u\in  L^\frac{pq}{2}_{\loc}(0,T;W^{1,\frac{pq}{2}}_{\loc} (\Om;\mathbb{R}^N))$ satisfies \eqref{p_weak_sol}
 for every $\varphi\in C_0^\infty(\Om_T;\mathbb{R}^N)$.
Then there exists a constant $c=c(n,p,L)$ such that
\[
    \fint_I |u(x,t)-u_I(x)|^q \,dt\le  c |I|^\frac{q}{2} \fint_I g(x,t)^q \,dt,
\]
for every parabolic cylinder $Q=Q_r=B\times I$ such that $2Q\Subset \Om_T$ and almost every $x\in B$,
where $g$ is as in \eqref{p_time_grad}.
\end{theorem}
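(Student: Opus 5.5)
The plan mirrors the proof of Theorem~\ref{time_poincare}, with Lemma~\ref{p_point_wise} replacing Lemma~\ref{point_wise}. Fix a parabolic cylinder $Q=Q_r=B\times I$ with $2Q\Subset\Om_T$.

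\emph{Step 1 (admissibility).} The hypotheses of Lemma~\ref{p_point_wise} have to be checked first. Since $q>\max\{1,p-1\}$, we have $\tfrac{pq}{2}>\tfrac{p\max\{1,p-1\}}{2}$. When $p\ge 2$ this gives $\tfrac{pq}{2}\ge q>p-1$, so $u\in L^{\max\{1,p-1\}}_{\loc}(0,T;W^{1,\max\{1,p-1\}}_{\loc})$ and $|F|\in L^{\max\{1,p-1\}}_{\loc}$. When $1<p<2$ we only need integrability exponent at least $1$, and here I may have to rely on the weak formulation being meaningful in the sense of Remark~\ref{rem_sol}. I expect this bookkeeping to be the only delicate point, since the statement is set up so that the integrability of $u$ and $F$ guarantees that $g^q$ is locally integrable.

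\emph{Step 2 (integrability of $g^q$).} Indeed, by \eqref{p_time_grad}, for $1<p<2$ we have $g^q\le c\,(M^*(\cdots)+1)^{pq/2}$. For $p>2$ we have $g^q=(M^*(h^{p-1}1_{2Q})+1)^{pq/(2(p-1))}$, where $h=\tfrac{|u|}{r}+|\na u|+|F|$. Since $\tfrac{pq}{2(p-1)}>1$ exactly because $q>\tfrac{2(p-1)}{p}$, which follows from $q>p-1$ and $p>2$, the boundedness of $M^*$ on $L^s$ for $s>1$ gives $g^q\in L^1(2Q)$, controlled by $\iint_{2Q}h^{pq/2}\,dz$. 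By Fubini, $\fint_I g(x,t)^q\,dt<\infty$ for almost every $x\in B$, so the right-hand side of the claimed inequality is finite for almost every $x$.

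\emph{Step 3 (integration of the pointwise bound).} Fix such an $x\in B$ for which \eqref{p_time_der} holds for almost every $t,s\in I$; such $x$ exist by Lemma~\ref{p_point_wise}, up to a Fubini null-set argument. By Jensen's inequality,
\[
\fint_I|u(x,t)-u_I(x)|^q\,dt\le\fint_I\fint_I|u(x,t)-u(x,s)|^q\,ds\,dt .
\]
Inserting \eqref{p_time_der} and using $|t-s|\le|I|$ yields
\[
\fint_I\fint_I|u(x,t)-u(x,s)|^q\,ds\,dt\le c|I|^{\frac q2}\fint_I\fint_I\bigl(g(x,t)+g(x,s)\bigr)^q\,ds\,dt .
\]
Next, $(a+b)^q\le 2^{q-1}(a^q+b^q)$ and symmetry in $t$ and $s$ bound the last double integral by $2^q\fint_I g(x,t)^q\,dt$.

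The resulting constant depends on $q$ through $2^q$ and the power $c^q$. This matches the analogous statement in Theorem~\ref{time_poincare}, where the same dependence is suppressed, so I would either follow that convention or note explicitly that $c=c(n,p,q,L)$.
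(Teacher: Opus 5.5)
Your proposal is correct and is essentially the paper's proof. Both use Jensen's inequality to pass to $\fint_I\fint_I|u(x,t)-u(x,s)|^q\,ds\,dt$, insert the pointwise bound of Lemma~\ref{p_point_wise} together with $|t-s|\le|I|$, and symmetrize in $t,s$. Your Steps~1--2 are bookkeeping the paper skips: the worry that $pq/2$ can fall below $1$ when $p<2$ concerns the statement's hypotheses, which the paper also passes over, and Step~2 is not needed since the inequality is trivial when its right-hand side is infinite. Your remark on constants is also accurate: the paper's final ``$=$'' should be an inequality with a factor $2^q$, so the constant genuinely depends on $q$ as well.
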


\begin{proof}
    We apply Lemma~\ref{p_point_wise} to have
    \begin{align*}
            \fint_I |u(x,t)-u_I(x)|^q \,dt 
            &\le \fint_I\fint_I |u(x,t)-u(x,s)|^q\,ds\,dt\\
            &\le c |I|^\frac{q}{2} \fint_I \fint_I ( g(x,t) + g(x,s)  )^q \,ds \,dt\\
            &= c |I|^\frac{q}{2} \fint_I  g(x,t)^q\,dt.
    \end{align*}
    Therefore, the proof is completed.
\end{proof}

By \cite[Theorem 9.1.15]{HKST2015}, we obtain self-improving properties of the Poincar\'e inequality in the time direction.

\begin{theorem}\label{thm_p_time_der}
Let $1< p<\infty$ and $\max\{1,p-1\}<q<\infty$.
    Assume that $|F|\in L^{\frac{pq}{2}}_{\loc}(\Omega_T)$ and that $u\in  L^\frac{pq}{2}_{\loc}(0,T;W^{1,\frac{pq}{2}}_{\loc} (\Om;\mathbb{R}^N))$ satisfies \eqref{p_weak_sol}
 for every $\varphi\in C_0^\infty(\Om_T;\mathbb{R}^N)$.
    \begin{itemize}
    \item[(i)] If $1< q< 2$, then for $1< q \le q^* <\tfrac{2q}{2-q}$, then there exists a constant $c=c(n,p,q,q^*,L)$ such that 
    \[
    \biggl( \fint_{I } \frac{|u(x,t)-u_{I}(x)|^{q^*}}{|I|^\frac{q^*}{2}}\,dt \biggr)^\frac{1}{q^*}
    \le c\biggl( \fint_{I}  g(x,t)^q\,dt\biggr)^\frac{1}{q}
    \]
    for every parabolic cylinder $Q=Q_r=B\times I$ such that $2Q\Subset \Om_T$ and for almost every $x\in B$,
where $g$ is as in \eqref{p_time_grad}.
    \item[(ii)] If $q=2$, then there exists a constant $c=c(n,p,L)$ such that
    \[
    \fint_I\exp\left(\left(
   \frac{|u(x,t)-u_{I}(x)|}{\displaystyle c|I|^\frac{1}{2}\left( \fint_{I}g(x,t)^{2} \,dt\right)^{\frac{1}{2}}}\right)^{2}\right)\,dt\le c
    \]
    for every parabolic cylinder $Q=Q_r=B\times I$ such that $2Q\Subset \Om_T$ and for almost every $x\in B$,
where $g$ is as in \eqref{p_time_grad}.
    \item[(iii)] If $q>2$, then there exists a constant $c=c(n,p,q,L)$ such that
    \[
    \esssup_{t\in I}\frac{|u(x,t)-u_{I}(x)|}{|I|^\frac{1}{2}}
    \le c\left( \fint_{I} g(x,t)^q\,dt\right)^\frac{1}{q}
    \]
    for every parabolic cylinder $Q=Q_r=B\times I$ such that $2Q\Subset \Om_T$ and for almost every $x\in B$,
where $g$ is as in \eqref{p_time_grad}.
\end{itemize}
\end{theorem}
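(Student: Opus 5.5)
The plan is to reduce the statement to a purely one-dimensional problem on the time interval $I$, with $x\in B$ fixed, and then apply the self-improvement theory for Hajłasz-type gradients in \cite[Theorem 9.1.15]{HKST2015}, exactly as for Theorem~\ref{time_der}.

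Fix $Q=Q_r=B\times I$ with $2Q\Subset\Om_T$. The integrability assumptions with exponent $\tfrac{pq}{2}$ guarantee that $g\in L^q(I)$ for almost every $x\in B$. Indeed, $g^q$ is comparable to $(M^*(\cdot)+1)^{pq/2}$ for $1<p<2$, and to $(M^*((\cdot)^{p-1})+1)^{pq/(2(p-1))}$ for $p>2$. Since $q>\max\{1,p-1\}$, we have $\tfrac{pq}{2(p-1)}>1$ in the latter case, so the $L^{pq/2}$ bound for the strong maximal function from the preliminaries applies. Fubini then gives finiteness on almost every time line.

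\textbf{Step 1 (pointwise Hajłasz inequality on time lines).} By Lemma~\ref{p_point_wise}, there is a null set outside of which, for almost every $x\in B$ and almost every $t,s\in I$,
\[
|u(x,t)-u(x,s)|\le c\,|t-s|^{\frac12}\bigl(g(x,t)+g(x,s)\bigr).
\]
Set $d(t,s)=|t-s|^{1/2}$ on $I$. With respect to this metric, Lebesgue measure on $\mathbb R$ satisfies $|\{s:d(t,s)<R\}|=2R^2$. This is a doubling metric measure space with volume growth of exact order $2$, the exponent replacing $n+2$ in Theorem~\ref{main_left}. Thus, for almost every $x$, the function $t\mapsto c\,g(x,t)$ is a Hajłasz gradient of $t\mapsto u(x,t)$ on $(I,d)$. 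Moreover, $I$ is itself a ball of radius comparable to $|I|^{1/2}$ in this metric.

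\textbf{Step 2 (self-improvement).} We apply \cite[Theorem 9.1.15]{HKST2015} with homogeneous dimension $s=2$ to the $q$-Poincaré inequality of Theorem~\ref{p_time_poincare}. We apply it on all subintervals $J\subset I$, not only on $I$. This is legitimate because Step 1 holds for all pairs in $I$, and averaging it over $J\times J$ gives
\[
\fint_J|u-u_J|^q\le c|J|^{q/2}\fint_J g^q
\]
for every interval $J\subset I$. The three regimes $q<2$, $q=2$ and $q>2$ then yield, respectively:
\begin{itemize}
\item[(i)] the Sobolev–Poincaré gain up to $q^*<\tfrac{2q}{2-q}$;
\item[(ii)] the exponential integrability with exponent $\tfrac{s}{s-1}=2$;
\item[(iii)] the Morrey-type $L^\infty$ bound.
\end{itemize}
In each case the radius $\rho$ of Theorem~\ref{main_left} is replaced by $|I|^{1/2}$.

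Alternatively, one could argue directly via a telescoping chain of dyadic subintervals shrinking to a Lebesgue point $t$, together with a weak-type estimate and a Vitali covering on $I$. This is the argument of the proof of \eqref{p_poincare_ieq}, now without intrinsic scaling.

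\textbf{Main obstacle.} The delicate point is measure-theoretic: the pointwise inequality of Lemma~\ref{p_point_wise} holds off a null set in $(x,t,s)$. We need it for almost every $x$, simultaneously for almost every pair $t,s$, and for all subintervals $J$. The plan is to handle this by Fubini: choose a full-measure set of $x$ such that the exceptional set of pairs in $I\times I$ is null and $g(x,\cdot)\in L^q(I)$. After that, the metric-space theorem applies verbatim, and the constants depend only on $n,p,q,q^*,L$.
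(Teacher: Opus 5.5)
Your proposal is correct and follows the paper's route. Lemma~\ref{p_point_wise} yields the time-direction Poincar\'e inequality of Theorem~\ref{p_time_poincare} on every subinterval of $I$, and the three regimes are then read off from \cite[Theorem 9.1.15]{HKST2015} in the $2$-regular space $(I,|t-s|^{1/2})$, which is exactly the paper's one-line argument. Your extra care is welcome: you work intrinsically on $(I,d)$ so that no dilated interval leaves $I$, and you do the Fubini reduction. The only slip is the side remark that $g(x,\cdot)\in L^q(I)$ when $1<p<2$, since the maximal theorem then needs $pq>2$; this is harmless, because all three estimates are trivial when $\fint_I g^q=\infty$.
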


Let $f\in L^1(\Om_T)$ and $0<h<T$. We define the Steklov average $f_h$ as
\[
f_h(x,t)=
\begin{cases}
    \frac1h\int_t^{t+h} f(x,s)\,ds,&\quad 0<t<T-h,\\
    0,& \quad T-h\le t<0.
\end{cases}
\]
In particular, using integration by parts \eqref{p_weak_sol} can be reformulated as 
\begin{equation}\label{stek_p_eq}
\begin{split}
        &\int_\Om (u_h)_t (x,t) \cdot \varphi(x)+ (\mathcal{A}(\cdot,\na u))_h(x,t)\cdot \na \varphi(x) \,dx \\
    &\qquad=\int_\Om (|F|^{p-2}F)_h(x,t)\cdot \na \varphi(x)\,dx 
\end{split}
\end{equation}
for every $\varphi\in C_0^\infty(\Om;\mathbb{R}^N)$ and almost every $0<t<T-h$.

\begin{theorem}
    Let $1 <p<2$ and $0<h<T$.
    Assume that $|F|\in L^p_{\loc}(\Omega_T)$ and that $u\in  L^p_{\loc}(0,T;W^{1,p}_{\loc} (\Om;\mathbb{R}^N))$ satisfies \eqref{p_weak_sol} for every $\varphi\in C_0^\infty(\Om_T;\mathbb{R}^N)$.
    Then $(u_h)_t \in L_{\loc}^2(\Om_T;\mathbb{R}^N)$ and
    \eqref{stek_p_eq} holds for any $\varphi\in L^2(\Om;\mathbb{R}^N)\cap W_0^{1,p}(\Om;\mathbb{R}^N)$.
    Moreover, if $1<p\le \tfrac{2n}{n+2}$, then $u\in L_{\loc}^\infty(0,T;L_{\loc}^{\frac{np}{n-p} }(\Om;\mathbb{R}^N) )$.
\end{theorem}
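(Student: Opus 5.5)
The plan is to derive both claims from the pointwise time estimate of Lemma~\ref{p_point_wise} in the case $1<p<2$. In that case
\[
g=\bigl(M^*((|\na u|+|F|)1_{2Q})+1\bigr)^{p/2},
\qquad\text{so}\qquad
g^2=\bigl(M^*((|\na u|+|F|)1_{2Q})+1\bigr)^{p}.
\]
Since $|\na u|+|F|\in L^p_{\loc}$ and $p>1$, the strong maximal function theorem from the preliminaries gives $g\in L^2(Q)$ for every $Q$ with $2Q\Subset\Om_T$.

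\textbf{Integrability of $(u_h)_t$.} For $t<T-h$ we have $(u_h)_t(x,t)=h^{-1}(u(x,t+h)-u(x,t))$. Take a cylinder $Q=B\times I$ with $2Q\Subset\Om_T$ and $h$ small enough that $t,t+h\in I$; for larger $h$, cover by finitely many such cylinders, or split the increment into a telescoping chain of steps of size comparable to $\rho^2$. Lemma~\ref{p_point_wise} then gives
\[
|(u_h)_t(x,t)|\le c\,h^{-1/2}\bigl(g(x,t)+g(x,t+h)\bigr)
\]
for almost every such point. Squaring and integrating over a compact subcylinder shows $(u_h)_t\in L^2_{\loc}(\Om_T;\mathbb R^N)$, with a bound depending on $h$.

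\textbf{Extending the class of test functions in \eqref{stek_p_eq}.} First I will justify \eqref{stek_p_eq} for $\varphi\in C_0^\infty$ by the standard Steklov argument: test \eqref{p_weak_sol} with $\varphi(x)\zeta(t)$ and use the Lebesgue differentiation theorem. For general $\varphi\in L^2\cap W^{1,p}_0$ I approximate by $\varphi_k\in C_0^\infty$ with $\varphi_k\to\varphi$ in both $L^2$ and $W^{1,p}$; mollifying a truncation of $\varphi$ produces such a sequence. At almost every $t$:
\begin{itemize}
\item the first term converges because $(u_h)_t(\cdot,t)\in L^2$, which holds by Fubini since $(u_h)_t\in L^2_{\loc}$;
\item the remaining terms converge because $(\mathcal A(\cdot,\na u))_h(\cdot,t)$ and $(|F|^{p-2}F)_h(\cdot,t)$ lie in $L^{p/(p-1)}$ by the growth bound \eqref{p_ellipticity}.
\end{itemize}

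The delicate point here is locality. The $L^2$ bound is only local, so I will either restrict to $\varphi$ supported in a compact set, with $\Om$ replaced by a subdomain, or read the statement locally.

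\textbf{The $L^\infty(L^{np/(n-p)})$ claim.} For $1<p\le\tfrac{2n}{n+2}$ we have $p^*=np/(n-p)\le 2$. I will test \eqref{stek_p_eq} with a truncated, cut-off power such as $\varphi=\eta^{p}\,T_k(u_h)|T_k(u_h)|^{\alpha}$, where $\eta$ is a spatial cutoff and $T_k$ is a truncation; the previous step makes this test function admissible. Integrating in time and letting $h\to0$, then $k\to\infty$, should give an energy estimate
\[
\sup_t\int\eta^p|u|^{2}\,dx\lesssim\text{data}.
\]
I expect this step to be the main obstacle, because $u$ is not a priori in $L^2$. The alternative route avoids energy estimates. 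By the Sobolev embedding, $u(\cdot,t_0)\in L^{p^*}(B)$ for almost every $t_0$. The pointwise bound from Lemma~\ref{p_point_wise},
\[
|u(x,t)-u(x,t_0)|\le c|t-t_0|^{1/2}\bigl(g(x,t)+g(x,t_0)\bigr),
\]
then gives an $L^{p^*}$ bound at every time $t$, uniform in $t$, provided $g(\cdot,t)\in L^{p^*}(B)$ uniformly in $t$. Since $g^2\sim (M^*)^p$ and $p^*\le 2$, one needs a slice-wise bound on the maximal function. I will first try this route, choosing $t_0$ as a Lebesgue point of $t\mapsto\int_B g^{2}(x,t)\,dx$, which is finite for almost every $t_0$. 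The uniformity in $t$ of $\int_B g(x,t)^{p^*}\,dx$ is the step where I am least certain the argument closes, and the energy route is the fallback.
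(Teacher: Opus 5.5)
Your first two steps match the paper's proof. Lemma~\ref{p_point_wise}, with $g^2=(M^*(\cdots)+1)^p\in L^1$, gives $|(u_h)_t|^2\le c\,h^{-1}(g(x,t)+g(x,t+h))^2$, and \eqref{stek_p_eq} is then extended by density. Your remarks on large $h$ and on reading the test-function class locally are reasonable refinements. The gap is in the last claim: neither of your two routes for $L^\infty_{\loc}(0,T;L^{p^*}_{\loc})$ closes.

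\textbf{Why the pointwise route fails.} It cannot give a bound uniform in $t$, because $\esssup_t\int_B g(x,t)^{p^*}\,dx$ is in general infinite. Take $F(y,s)=b(s)E$ with a constant matrix $E$ and $b\in L^p_{\loc}$ essentially unbounded, e.g.\ $b(s)=|s-s_0|^{-1/(2p)}$. Then $\dv(|F|^{p-2}F)=0$, so $u\equiv0$ is a solution. Yet shrinking the time interval in \eqref{strong_mf} gives $M^*((|\nabla u|+|F|)1_{2Q})(x,t)\ge|E|\,|b(t)|$ for a.e.\ $(x,t)\in Q$. Hence $\int_B g(\cdot,t)^{p^*}dx\ge|B|\,(|E||b(t)|)^{pp^*/2}$, which is unbounded in $t$. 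The pointwise estimate is simply too lossy for $L^\infty$ in time.

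\textbf{Why the energy route fails as set up.} You aim at $\sup_t\int\eta^p|u|^2\,dx$. After $k\to\infty$ this requires $\int\eta^p|u(\cdot,t_1)|^2\,dx<\infty$ at some reference time, and nothing provides it. The only slice information is $u(\cdot,t_1)\in W^{1,p}\subset L^{p^*}$ with $p^*=\tfrac{np}{n-p}\le 2$, and the inequality is strict except at the endpoint $p=\tfrac{2n}{n+2}$.

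\textbf{The missing idea.} Match the power to $p^*$ rather than to $2$, as the paper does. Test \eqref{stek_p_eq} with $\varphi\,\zeta(t)\,u_h(1+|u_h|^2)^{\frac{p^*}{2}-1}$, where $\varphi\in C_0^\infty(\Om)$. This works for three reasons.
\begin{enumerate}
\item Admissibility: since $p^*\le2$, the square of this function is at most $(1+|u_h|^2)^{p^*-1}\le(1+|u_h|^2)^{p^*/2}$. So it lies in $L^2$ at a.e.\ time by Sobolev, and your second step makes it admissible.
\item Gradient control: its gradient is bounded by $c|\nabla u_h|$. The regularization $1+|u_h|^2$ avoids the singularity that a pure power $|u|^{p^*-2}u$ would have at $u=0$.
\item Time term: $(u_h)_t\cdot u_h(1+|u_h|^2)^{\frac{p^*}{2}-1}=\tfrac{1}{p^*}\partial_t(1+|u_h|^2)^{p^*/2}$ is an exact derivative.
\end{enumerate}
The elliptic and source terms are then bounded in absolute value, with no sign condition needed, by $c\iint(|u|+|\nabla u|+|F|+1)^p(\varphi+|\nabla\varphi|)\,dz$. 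After $h\to0$, the reference slice $\int\varphi(1+|u(\cdot,t_1)|^2)^{p^*/2}\,dx$ is finite for a.e.\ $t_1$ by Sobolev. Since every bound is two-sided in time, this yields $\esssup_t\int\varphi|u(\cdot,t)|^{p^*}\,dx<\infty$ on compact time intervals.
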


\begin{proof}
Note that for almost every $x\in \Om$ and $t\in (0,T-h)$ we have
\[
(u_h)_t(x,t)=\frac{u(x,t+h)-u(x,t)}{h}.
\]
It follows by \eqref{p_time_der} that for any $Q=B_r\times I_r$ such that $2Q\Subset\Om_{T-h}$ we have
\[
|(u_h)_t(x,t)|^2\le c h^{-1}\bigl(g(x,t) + g(x,t+h)\bigr)^2
\]
for almost every $x\in B_r$ and $t\in I_r$, where $g$ is defined in \eqref{p_time_grad}. Since the right-hand side is integrable in $2Q$, we get $|(u_h)_t|\in L^2(Q)$. Moreover, by \eqref{stek_p_eq} we deduce by a density argument that $\varphi\in L^2(\Om;\mathbb{R}^N)\cap W_0^{1,p}(\Om;\mathbb{R}^N)$ is an admissible test function in \eqref{stek_p_eq}.

For $1<p\le \tfrac{2n}{n+2}$ we consider the function
\[
     \frac{u_h(x,t)}{ (1+|u_h(x,t)|^2)^{1-\frac{np}{2(n-p)}} } 
\]
and note that it belongs to $L_{\loc}^2(\Om;\mathbb{R}^N)$ for almost every $t\in(0,T-h)$. Indeed, since $\tfrac{np}{2(n-p)}\le 1$ we have
\[
\biggl( \frac{|u_h|}{ (1+|u_h|^2)^{1-\frac{np}{2(n-p)}} } \biggr)^2 \le (1+|u_h|^2)^{\frac{np}{n-p}-1}\le (1+|u_h|^2)^{\frac{np}{2(n-p)}}
\]
and therefore by Minkowski's integral inequality we get
\begin{align*}
        &\biggl( \fint_B \biggl|   \frac{u_h}{ (1+|u_h|^2)^{1-\frac{np}{2(n-p)}} }  \biggr|^2 \,dx \biggr)^\frac{1}{2} 
        \le  c\biggl( \fint_B |u_h|^\frac{np}{n-p} \,dx \biggr)^\frac{1}{2} +c  \\
        &\qquad\le   c\biggl( \fint_t^{t+h}\biggl( \fint_B  |u|^\frac{np}{n-p}\,dx\biggr)^\frac{n-p}{np} \,dt\biggr)^\frac{np}{2(n-p)} +c  \\
        &\qquad\le c \biggl( \fint_t^{t+h}\biggl( \fint_B  ( |u|+ |\na u| )^p \,dx\biggr)^\frac{1}{p} \,dt \biggr)^\frac{np}{2(n-p)} +c,
\end{align*}
where $c>0$ is a constant independent from $h,t>0$.
Moreover, we observe that
\begin{align*}
\biggl|\na\biggl(  \frac{u_h}{ (1+|u_h|^2)^{1-\frac{np}{2(n-p)}} }  \biggr)  \biggr|
&\le \frac{|\na u_h|}{ (1+|u_h|^2)^{1-\frac{np}{2(n-p)}} }+ \frac{c|u_h|^2 |\na u_h|}{ (1+|u_h|^2)^{2-\frac{np}{2(n-p)}} }\\
&\le c|\na u_h|
\end{align*}
for some $c>0$ and
\[
 \frac{(u_h)_t\cdot u_h}{ (1+|u_h|^2)^{1-\frac{np}{2(n-p)}} } = \frac{n-p}{np} \bigl( (1+|u_h|^2)^{ \frac{np}{2(n-p)}} \bigr)_t .
\]
Let $\varphi\in C_0^\infty(\Om)$ and $0<t_0<T$. For sufficiently small $h,\epsilon>0$ so that $0<t_0<T-h$ we let
\[
\zeta_\epsilon(t)=
\begin{cases}
    1,&\quad 0<t \le t_0-\epsilon,\\
    \frac{-t+t_0}{\epsilon}, &\quad t_0-\epsilon<t\le t_0,\\
    0,&\quad t_0<t<T-h.
\end{cases}
\]
We take
\[
 \frac{u_h\varphi\zeta_\epsilon}{ (1+|u_h|^2)^{1-\frac{np}{2(n-p)}} }\in L^2(\Om;\mathbb{R}^N)\cap W^{1,p}_0(\Om;\mathbb{R}^N)
\]
as a test function in \eqref{stek_p_eq}. Then, applying integration by parts, properties of Steklov averages and Young's inequality, we get
\begin{align*}
        &-\int_\Om (1+|u_h|^2)^{\frac{np}{2(n-p)}}\varphi (\zeta_\epsilon)_t \,dx\\ 
        &\qquad\le c\int_{\Om} \bigl((|\na u_h |+|F_h|)^p\varphi+ (|\na u_h |+|F_h|)^{p-1}(1+|u_h|^2)^{\frac{np}{2(n-p)}-\frac{1}{2} }|\na\varphi|\bigr) \, dx\\
        &\qquad\le c\int_{\Om} \bigl((|\na u_h |+|F_h|)^p\varphi+ (|\na u_h |+|F_h|)^{p-1}(1+|u_h|^2)^{\frac{1}{2} }|\na\varphi|\bigr) \, dx\\
        &\qquad\le c\int_{\Om}  (|u_h|+|\na u_h |+|F_h|+1)^p(\varphi+|\na \varphi|) \, dx,
\end{align*}
where $c=c(n,N,p,L)$.
Taking the integral over $(0,T-h)$ with respect to the time variable and then letting $\epsilon$ and $h$ tend to $0$, we obtain
\[
\int_\Om (1+|u|^2)^{\frac{np}{2(n-p)}}\varphi(x,t_0) \,dx
\le c\iint_{\Om_T} (|u|+|\na u |+|F|+1)^p(\varphi+|\na \varphi|) \, dz.
\]
Since the right-hand side is finite and $0<t_0<T$ is arbitrary, this completes the proof.
\end{proof}

\begin{remark}
    For the case $\tfrac{2n}{n+2}<p<\infty$, since $|u_h(\cdot,t)|\in L^2_{\loc}(\Om)$ for almost every $t\in (0,T-h)$, the standard energy estimate is admissible and therefore the Caccioppoli inequality \eqref{p_cacc_ie} holds without the assumption \eqref{cacc_stopping}.
\end{remark}

\section*{Acknowledgement}
 W. Kim has been supported by the KIAS Individual Grant (HP105501).

\end{document}